\author{Geoffrey Powell}
\title[Lie algebra homology with tensor coefficients]{Lie algebra homology  with coefficients tensor products of the adjoint representation in relative polynomial degree $2$}
\address{Univ Angers, CNRS, LAREMA, SFR MATHSTIC, F-49000 Angers, France}
\email{Geoffrey.Powell@math.cnrs.fr}
\urladdr{https://math.univ-angers.fr/~powell/}
\subjclass[2020]{17B01,17B56}
\newtheorem{THM}{Theorem}
\newtheorem{COR}{Corollary}
\newtheorem{thm}{Theorem}[section]
\newtheorem{prop}[thm]{Proposition}
\newtheorem{cor}[thm]{Corollary}
\newtheorem{lem}[thm]{Lemma}
\theoremstyle{definition}
\newtheorem{defn}[thm]{Definition}
\newtheorem{exam}[thm]{Example}
\theoremstyle{remark}
\newtheorem{rem}[thm]{Remark}
\newtheorem{REM}{Remark}
\newtheorem{nota}[thm]{Notation}
\renewcommand{\hom}{\mathrm{Hom}}
\newcommand{\g}{\mathfrak{g}}
\renewcommand{\phi}{\varphi}
\newcommand{\nat}{\mathbb{N}}
\newcommand{\rat}{\mathbb{Q}}
\newcommand{\lie}{\mathbb{L}\mathrm{ie}}
\newcommand{\cyclie}{\mathbb{C}\mathrm{ycLie}}
\newcommand{\liemod}{\mathrm{Lie}}
\newcommand{\schur}{\mathbf{S}}
\newcommand{\sym}{\mathfrak{S}}
\newcommand{\sgn}{\mathrm{sgn}}
\newcommand{\triv}{\mathrm{triv}}
\newcommand{\dbar}{\overline{\delta}}
\newcommand{\ob}{\mathrm{Ob\ }}
\newcommand{\fmodq}{\mathbf{mod}_\rat}
\newcommand{\adbar}{\overline{\mathrm{ad}}}
\newcommand{\coker}{\mathrm{coker}}
\newcommand{\id}{\mathrm{Id}}
\newcommand{\diff}{\mathsf{d}}
\newcommand{\f}{\mathcal{F}}
\newcommand{\fout}{\f^\mathrm{Out}}
\newcommand{\gr}{\mathbf{gr}}
\newcommand{\op}{^\mathrm{op}}
\newcommand{\cat}{\mathsf{Cat}}
\newcommand{\lieopd}{\mathrm{Lie}}
\begin{document}

\begin{abstract}
The homology of free Lie algebras with coefficients in tensor products of the adjoint representation working over $\rat$ contains important information on the homological properties of polynomial outer functors on free groups. The latter category was introduced in joint work with Vespa, motivated by the study of higher Hochschild homology of wedges of circles. 

There is a splitting of this homology by polynomial degree (for polynomiality with respect to the generators of the free Lie algebra) and one can consider the polynomial degree relative to the number of tensor factors in the coefficients. It suffices to consider the Lie algebra homology in homological degree one; this  vanishes in relative degree $0$ and is readily calculated in relative degree $1$.

This paper calculates the homology in relative degree $2$, which presents interesting features. This confirms a conjecture of Gadish and Hainaut.  
\end{abstract}

\maketitle

\subsection*{Keywords}
Lie algebra homology; adjoint representation; polynomial functor.

\section{Introduction}
\label{sect:intro}

For any Lie algebra $\g$ (working here over $\rat$) and natural number $r$, one can form the Lie algebra homology $H_* (\g; \g^{\otimes r})$ of $\g$ with coefficients in the $r$th tensor product of the adjoint representation. The case $r=0$ corresponds to trivial coefficients $\rat$; the case $r=1$ is homology with coefficients in the adjoint representation. Since the action of the symmetric group $\sym_r$ by place permutations on $\g^{\otimes r}$ is by $\g$-module isomorphisms, $H_* (\g; \g^{\otimes r})$ inherits a $\sym_r$-module structure. 

Here we are interested in the case where $\g$ is the free Lie algebra $\lie (V)$ on a finite-dimensional $\rat$-vector space $V$, considered as a functor of $V$. This is motivated by the relationship with higher Hochschild homology arising from the author's joint work with Christine Vespa \cite{2018arXiv180207574P}. More explicitly, in that paper we established the relationship with the category of {\em outer functors}, namely functors from $\mathbf{gr}$ (the category of finitely-generated free groups) to $\rat$-vector spaces on which inner automorphisms of free groups act trivially. In \cite{MR4696223}, using the infinitesimal interpretation of polynomial functors on free groups given in \cite{MR4835394}, this was related to Lie algebra homology with coefficients in tensor products of the adjoint representation.
This is also related to work of Nir Gadish and Louis Hainaut \cite{MR4799912}, who study the homology of certain configuration spaces and relate this to higher Hochschild homology; their calculations are used by Hainaut in  \cite{2023arXiv231116881H} to study homological properties of the category of polynomial outer functors. (See Appendix \ref{sect:motivation}, which is independent of the body of the text, for slightly more detail on the above.)

Working with the free Lie algebra,  since $H_* (\lie(V); \lie (V)^{\otimes r} )$ vanishes for $*>1$, its calculation reduces to that  of the $\sym_r$-modules
\begin{eqnarray*}
&&H_0 (\lie(V); \lie (V)^{\otimes r} )  \\
&&H_1 (\lie(V); \lie (V)^{\otimes r} ),
\end{eqnarray*}
naturally with respect to $V$. For $r=1$, the $H_1$-term is given by the Schur functor associated to the cyclic Lie operad (see Example \ref{exam:cyclie}):
\[
H_1 (\lie(V); \lie (V) ) \cong \cyclie (V).
\]

 One can restrict to studying $H_1 (\lie(V); \lie (V)^{\otimes r} )$, since the homological degree zero term can be related to this by using the isomorphism
 $$
 H_0 (\lie(V); \lie (V)^{\otimes r} )
\ \oplus \ 
(V \otimes \lie(V)^{\otimes r})
 \ \cong \  
 H_1 (\lie(V); \lie (V)^{\otimes r} )
\ \oplus \ 
 \lie(V)^{\otimes r} 
 $$
derived from the complex calculating the homology. This is advantageous, as observed by Gadish and Hainaut, since $H_1 (\lie(V); \lie (V)^{\otimes r} )$ is  usually smaller than $H_0 (\lie(V); \lie (V)^{\otimes r} )$.

The functor $V \mapsto H_1 (\lie(V); \lie (V)^{\otimes r} )$ is  analytic (in the sense of Joyal \cite{MR927763}).  In particular, for each $n \in \nat$, one can focus on the $n$th homogeneous component,  denoted here by $(-)^{[n]}$. The identification for $n \leq r+1$ is  straightforward (see Proposition \ref{prop:small_n}); there are $\sym_r$-equivariant natural isomorphisms
\[
H_1 (\lie (V); \lie(V)^{\otimes r})^{[n]} = \left\{ 
\begin{array}{ll}
0 & n \leq r \\
\Gamma^{r+1} (V) \boxtimes \triv_r & n = r+1,
\end{array}
\right.
\]  
where $\triv_r$ is the trivial representation of $\sym_r$ and $\Gamma^{r+1}(V) = (V^{\otimes r+1})^{\sym_{r+1}}$ is the $(r+1)$st divided power functor.

The purpose of this paper is to treat the next case, $n=r+2$. In the following statement, $\sgn_r$ denotes the sign representation of $\sym_r$, $\Lambda^{r+2}$ is the $(r+2)$nd exterior power functor and $\schur_{(r,1^2)}$ is the Schur functor associated to the partition $(r,1^2)$.

\begin{THM}
\label{THM}
For $r \in \nat$, there are $\sym_r$-equivariant natural isomorphisms
\[
H_1 (\lie (V); \lie(V)^{\otimes r})^{[r+2]}
\cong 
\left\{ 
\begin{array}{ll}
0 & r=0\\
\Lambda^3 (V) \boxtimes \sgn_1 & r=1 \\
\Lambda^{r+2} (V) \boxtimes \sgn_r \   \oplus \  \schur_{(r, 1^2)} (V) \boxtimes \triv_r
& r>1.
\end{array}
\right.
\]
\end{THM}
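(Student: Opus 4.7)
The plan is to compute $H_1$ via the standard free resolution $0 \to T(V) \otimes V \to T(V) \to \rat \to 0$ of the trivial module over $T(V) = U\lie(V)$. With $M = \lie(V)^{\otimes r}$ this identifies $H_1(\lie(V); M) = \ker (d : V \otimes M \to M)$, where $d$ is the adjoint-action map; it suffices to analyze this kernel in polynomial degree $r+2$ as a $\sym_r$-equivariant polynomial functor in $V$.

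Enumerating compositions of $r+1$ and $r+2$ into $r$ positive parts gives the decomposition of the source as $\bigoplus_{i=1}^r M_i$ with $M_i = V \otimes V^{\otimes i-1} \otimes \lie^2(V) \otimes V^{\otimes r-i}$, indexed by the position $i$ of the unique $\lie^2(V) = \Lambda^2 V$ factor; the target splits into a Type~A summand (one $\lie^3(V)$ factor at some position; $r$ pieces) and a Type~B summand (two $\lie^2(V)$ factors at positions $i<j$; $\binom{r}{2}$ pieces, present for $r \geq 2$). Correspondingly $d = d_A + d_B$, where $d_A$ acts diagonally via the Jacobiator $V \otimes \lie^2(V) \to \lie^3(V)$ at position $i$, and $d_B$ acts off-diagonally by bracketing the action $V$ with a $V$-factor at some position $k \neq i$, producing a Type~B element.

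The Jacobi short exact sequence $0 \to \Lambda^3(V) \to V \otimes \Lambda^2(V) \to \lie^3(V) \to 0$ identifies $\ker(d_A|_{M_i}) \cong V^{\otimes i-1} \otimes \Lambda^3(V) \otimes V^{\otimes r-i}$, so $\ker d_A = \bigoplus_i K_i$ with $K_i \cong \Lambda^3(V) \otimes V^{\otimes r-1}$ as a polynomial functor. An iterated Pieri computation for $s_{(1^3)} \cdot s_{(1)}^{r-1}$ gives the Schur decomposition of each $K_i$; in particular, $\schur_{(r,1^2)}(V)$ and $\Lambda^{r+2}(V) = \schur_{(1^{r+2})}(V)$ each occur with multiplicity one. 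The $\sym_r$-action permutes the summands $K_i$, but tracking the induced permutation of atomic positions shows that each isomorphism $K_i \to K_{\sigma(i)}$ carries a non-trivial sign depending on both $\sigma$ and $i$; the resulting $\sym_r$-multiplicity space of each Schur component in $\ker d_A$ is a specific twisted permutation representation, which can be identified by an explicit character calculation.

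The main obstacle is intersecting with $\ker d_B$. Using the embedding $\Lambda^3(V) \hookrightarrow V \otimes \Lambda^2(V)$ I would make $d_B|_{\ker d_A}$ explicit: the key technical observation, verifiable by a short change of variables in the defining $\sym_{r+2}$-sum, is that for the totally antisymmetric element $\xi \in \Lambda^{r+2}(V)$ viewed inside each $K_i$, the two contributions to any single Type~B target $B_{\{i,j\}}$—one from $K_i$ and one from $K_j$—agree up to an explicit sign. The resulting cancellation conditions, one per unordered pair $\{i,j\}$, cut the $\Lambda^{r+2}(V)$-multiplicity space precisely down to the $\sgn_r$-isotypic, producing $\Lambda^{r+2}(V) \boxtimes \sgn_r$. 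A parallel analysis applied to the $\schur_{(r,1^2)}(V)$-component of $\ker d_A$ isolates the surviving $\triv_r$-isotypic via symmetrization over positions, producing $\schur_{(r,1^2)}(V) \boxtimes \triv_r$. The remaining Schur components of $\ker d_A$ (such as $\schur_{(r-1, 1^3)}(V)$) must be shown to map injectively into Type~B; I would verify this either by a Frobenius character computation using the Littlewood--Richardson rule, matched against explicit representatives for the two surviving pieces, or by direct inspection of the analogous cancellation conditions. The small cases $r=0$ (the source vanishes in polynomial degree~$2$) and $r=1$ (Type~B is absent and $\ker d = \ker d_A = \Lambda^3(V)$) are immediate from the first step.
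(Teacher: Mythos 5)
Your setup matches the paper's: identify $H_1$ in degree $r+2$ with the kernel of the adjoint boundary $\adbar_r^{[r+2]}$, split the target into the ``Type~A'' summand $\bigoplus_i T^{i-1}\otimes\schur_{(2,1)}\otimes T^{r-i}$ and the ``Type~B'' summand $\bigoplus_{i<j} T^{i-1}\otimes\Lambda^2\otimes T^{j-i-1}\otimes\Lambda^2\otimes T^{r-j}$, and observe that $\ker d_A\cong\bigoplus_i K_i$ with $K_i\cong\Lambda^3\otimes T^{r-1}$. That much is Lemma~\ref{lem:ker_delta'}, Notation~\ref{nota:dbar}, and Lemma~\ref{lem:domain_codomain_dbar}. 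Your observation that the pair of contributions to a given Type~B target are forced to cancel, and that the totally antisymmetric element survives this with a sign forcing the $\sgn_r$ isotypic, is the content of Propositions~\ref{prop:X_i_ker_dbar} and~\ref{prop:lower_bound_ker}; the parallel argument for $\schur_{(r,1^2)}\boxtimes\triv_r$ via symmetrization also matches the paper's lower bound. The cases $r\le 1$ are handled identically.

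The genuine gap is the upper bound: showing that the \emph{remaining} composition factors of $\bigoplus_i K_i$ contribute nothing to the kernel. Your proposed methods do not suffice for this. A ``Frobenius character computation using the Littlewood--Richardson rule'' only recovers the $\sym_{r+2}\times\sym_r$-module structure of the source and target of $d_B|_{\ker d_A}$; it carries no information about the \emph{rank} of that particular natural transformation, which is exactly what determines the kernel. ``Direct inspection'' is also not innocuous, because the cancellation conditions are not $\binom r2$ independent linear conditions but a single ``cocycle''-type constraint once one component $X_1\in K_1$ is fixed (each $X_j$ is determined as $-\alpha_{1;j}(X_1)$, and the residual condition is that the chain $\alpha_{1;j}^{-1}\alpha_{i;j}\alpha_{1;i}$ acts as $-\id$). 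For $r=3$ this reduces to analysing whether $1$ is an eigenvalue of a specific automorphism of $\Lambda^3\otimes T^2$, and here the Pieri decomposition has a composition factor $\schur_{(2,1^3)}$ with multiplicity two — so Schur's lemma gives a $2\times2$ block, the isomorphism $\mathrm{End}(\schur_{(2,1^3)}^{\oplus 2})\cong M_2(\rat)$ is not canonical, and a character argument cannot decide whether $1$ is an eigenvalue. The paper resolves this by computing the trace and determinant of the $2\times2$ block (Lemmas~\ref{lem:determinant}--\ref{lem:trace}) and finding eigenvalues $\tfrac14(-1\pm\sqrt{15}\,i)$. For $r>3$ the paper instead runs an induction on $r$ via the two ``face'' inclusions $\ker\dbar_r\subset\ker\dbar_{r-1}\otimes\Lambda^1$ and its $\tau$-twist, and the intersection Lemmas~\ref{lem:intersect_lambda} and~\ref{lem:intersect_schur}; this step uses that for $r>3$ the summands $\Lambda^{r+1}\otimes\Lambda^1$ and $\schur_{(r-1,1^2)}\otimes\Lambda^1$ are multiplicity-disjoint, which fails precisely at $r=3$. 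None of this apparatus appears in your proposal, and without some replacement for it the proof does not close. (A minor point: the $\sym_r$-action on $\bigoplus_i K_i$ permutes the summands \emph{without} extra signs — it is the induced module $(\Lambda^3\otimes T^{r-1})\uparrow_{\sym_{r-1}}^{\sym_r}$ — so the ``non-trivial sign depending on $\sigma$ and $i$'' you mention is not present at that stage; all signs enter only through the Type~B cancellation conditions.)
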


\begin{REM}
\label{REM:bimodule}
Using the Schur correspondence between representations of symmetric groups and polynomial functors, the result of Theorem \ref{THM} can be expressed for $r>1$ as stating that the Lie algebra homology is encoded by the representation of $\sym_{r+2} \times \sym_r$:
\begin{eqnarray}
\label{eqn:bimodule_THM1}
S_{(1^{r+2})} \boxtimes S_{(1^r)} \ \oplus \ S_{(r,1^2)} \boxtimes S_{(r)}.
\end{eqnarray}
Here, for $\lambda$ a partition of $n\in \nat$, $S_\lambda$ denotes the corresponding simple representation of $\sym_n$. With our conventions, $S_{(n)}$ is isomorphic to $\triv_n$ and $S_{(1^n)}$ to $\sgn_n$.
\end{REM}

Theorem \ref{THM} is straightforward for $r\in \{0, 1, 2\}$. The case  $r=3$ is a technical calculation that is postponed until Section \ref{sect:case_r3}. (This case was already known by computer calculation, for example as a consequence of calculations in \cite{MR4799912}; here it is established by hand.) These cases serve as the initial steps for the inductive proof that is given in Section \ref{sect:main}. 

A consequence of Theorem \ref{THM} is:

\begin{COR}
\label{COR:GH}
Gadish and Hainaut's conjecture \cite[Conjecture 6.7]{MR4799912} holds.
\end{COR}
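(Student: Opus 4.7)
The plan is to derive the corollary as a matching exercise, leveraging Theorem \ref{THM} via the bimodule reformulation of Remark \ref{REM:bimodule}. Conjecture 6.7 of \cite{MR4799912} predicts a specific $\sym_{r+2} \times \sym_r$-representation on the piece of higher Hochschild / configuration space homology that is in bijection, under the work of \cite{2018arXiv180207574P,MR4696223}, with $H_1(\lie(V); \lie(V)^{\otimes r})^{[r+2]}$ viewed as a polynomial functor of $V$ together with its place-permutation $\sym_r$-action. The task is to check that this prediction is precisely (\ref{eqn:bimodule_THM1}).

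First I would lay out the dictionary explicitly. The Schur--Weyl correspondence promotes the homogeneous polynomial degree $r+2$ part of the functor $V \mapsto H_1(\lie(V); \lie(V)^{\otimes r})$ to an $\sym_{r+2}$-module, and this commutes with the $\sym_r$-action by place permutations on the coefficients. Under the identifications recalled in the introduction, this is exactly the $\sym_{r+2} \times \sym_r$-bimodule that Gadish and Hainaut describe in their conjecture for the relative polynomial degree $2$ component of their object of study. Here I would be careful to track gradings and any sign conventions that enter via the comparison between the Lie operadic framework and the configuration-space presentation of \cite{MR4799912}, so that the correspondence of irreducibles is unambiguous.

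Second, I would compare term-by-term. For $r > 1$, Remark \ref{REM:bimodule} gives the answer as $S_{(1^{r+2})} \boxtimes S_{(1^r)} \oplus S_{(r,1^2)} \boxtimes S_{(r)}$, and one verifies that this matches the two summands predicted by Conjecture 6.7 (a "top exterior" piece paired with the sign representation, and a "hook" piece paired with the trivial representation). The degenerate cases $r \in \{0, 1\}$ require a separate line of checking: one confirms that the Gadish--Hainaut formula collapses, respectively, to $0$ and to $\Lambda^3(V) \boxtimes \sgn_1$, in agreement with Theorem \ref{THM}.

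The main obstacle is not conceptual but bookkeeping: one must align the indexing conventions, the two different roles of the symmetric groups, and any twists arising from the passage through outer functors and higher Hochschild homology between the formulation here and that of \cite{MR4799912}. Once this translation is pinned down, the corollary is a direct reading of Theorem \ref{THM}.
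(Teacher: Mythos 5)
Your strategy is the same as the paper's: reduce the corollary to a translation between the $\sym_{r+2}\times\sym_r$-bimodule encoding $H_1(\lie(V);\lie(V)^{\otimes r})^{[r+2]}$ (Remark~\ref{REM:bimodule}) and the bimodule $\Phi^1[r+2,r]$ appearing in Gadish--Hainaut's Conjecture~6.7, using the comparisons of \cite{2018arXiv180207574P,MR4835394,MR4696223,MR3982870}. However, your term-by-term comparison is off, and the vague appeal to ``any twists'' is exactly where the substance of the corollary lies. Conjecture~6.7 does \emph{not} predict the bimodule (\ref{eqn:bimodule_THM1}), i.e.\ $S_{(1^{r+2})}\boxtimes S_{(1^r)}\oplus S_{(r,1^2)}\boxtimes S_{(r)}$ (your ``top exterior paired with sign'' plus ``hook paired with trivial''); it predicts (\ref{eqn:bimodule_Phi1}), $S_{(r+2)}\boxtimes S_{(r)}\oplus S_{(3,1^{r-1})}\boxtimes S_{(1^r)}$, in which trivial is paired with trivial and the other hook is paired with sign. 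These two bimodules are not equal: they differ by applying $\lambda\mapsto\lambda^\dagger$ (conjugate partition) to every summand, equivalently by tensoring with the sign representation of each symmetric group.

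This conjugation is precisely the content you would have to establish (or cite): the dictionary relating $\Phi^1[r+2,r]$ to $H_1(\lie(V);\lie(V)^{\otimes r})^{[r+2]}$ carries a twist by sign, arising from the passage between the configuration-space/Hochschild--Pirashvili model of \cite{MR4799912,MR3982870} and the Lie-algebraic/outer-functor model of \cite{2018arXiv180207574P,MR4696223}. Without identifying this twist explicitly, the ``matching exercise'' does not close, because the irreducibles on the two sides simply do not coincide as written. The fix is short once named: observe that the right-hand side of (\ref{eqn:bimodule_Phi1}) is obtained from (\ref{eqn:bimodule_THM1}) by $S_\lambda\mapsto S_{\lambda^\dagger}$, and that this conjugation is exactly what the comparison of frameworks produces, as already recorded in \cite{2018arXiv180207574P} and \cite[Section~4.4]{MR4799912}.
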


Let us outline how Corollary \ref{COR:GH} follows from Theorem \ref{THM}. 
Gadish and Hainaut's conjecture is stated in terms of a bimodule that they denote $\Phi^1 [r+2, r]$ (using the indexing $r$ compatible with that of Theorem \ref{THM}). Their conjecture asserts that there is an isomorphism of $\sym_{r+2}\times \sym_r$-modules
\begin{eqnarray}
\label{eqn:bimodule_Phi1}
\Phi^1 [r+2,r] 
\cong
S_{(r+2)} \boxtimes S_{(r)} 
\ \oplus \ 
S_{(3,1^{r-1})} \boxtimes S_{(1^r)}.
\end{eqnarray}
 Observe that the right hand side of (\ref{eqn:bimodule_Phi1}) is obtained from (\ref{eqn:bimodule_THM1}) by replacing each occurrence of $S_\lambda$ for a partition $\lambda$ by $S_{\lambda^\dagger}$, where $\lambda^\dagger$ is the conjugate partition to $\lambda$.

In \cite[Section 4.4]{MR4799912}, Gadish and Hainaut explain how their objects $\Phi [-,-]$ are related to higher Hochschild homology, as considered in \cite{2018arXiv180207574P} and by Turchin and Willwacher in \cite{MR3982870}. Now, \cite[Theorem 6]{2018arXiv180207574P} gives a description of  higher Hochschild homology in terms of  algebraic objects defined in terms of functors on the category $\gr$. (This result also explains the passage between $\lambda$ and $\lambda^\dagger$ for the action of $\sym_{r+2}$ in the case at hand.)

These algebraic objects then relate to the Lie algebra homology using the results of \cite{MR4835394} and \cite{MR4696223} (see Appendix \ref{sect:motivation} for an outline of the main ideas). Indeed, the algebraic objects appearing in \cite[Theorem 6]{2018arXiv180207574P} are essentially determined by the family of functors $\omega \beta _s \rat \sym_s$, for $s\in \nat$, that appear in \cite[Theorem 7]{2018arXiv180207574P}. By that theorem, these form a family of injective cogenerators in the appropriate category of outer polynomial functors on $\gr$. After dualizing (so as to pass to functors on $\gr\op$), this corresponds to the property exhibited in Example \ref{exam:H_0_projective}, thereby establishing the link with Lie algebra homology. 

Putting the above together yields the explicit relationship between $\Phi^1 [r+2,r] $ and $H_1 (\lie (V); \lie(V)^{\otimes r})^{[r+2]}$ 
 which shows that Theorem \ref{THM} implies Corollary \ref{COR:GH}.

\subsection*{Acknowledgement} This project grew out of joint work with Christine Vespa; the author is very grateful  for her ongoing interest. The author is also grateful to Louis Hainaut for his interest and his comments on a previous version. 

The author thanks all the  anonymous referees for their careful reading of the paper and, in particular,  for their valuable suggestions, which have lead to an improvement of the exposition.

\section{Conventions and background} 
\label{sect:prelim}

%%%%%%%%%%%%%%%%%%%%%%%%%%%%%%%%%%%%%%%%%%%%%%
\subsection{Representations of the symmetric groups and Schur functors}

We work over the rational numbers, $\rat$. The category of finite-dimensional $\rat$-vector spaces is denoted by $\fmodq$;  all undecorated tensor products $\otimes$ are understood to be taken over $\rat$. For $n \in \nat$, the symmetric group on $n$ letters is denoted $\sym_n$.
The category of $\rat \sym_n$-modules (or $\sym_n$-modules) is semisimple. The isomorphism classes of simple $\sym_n$-modules are indexed by the partitions $\lambda$ of $n$ (written $\lambda \vdash n$), with $S_\lambda$ denoting the associated simple module. The indexing convention is chosen so that $S_{(n)}$ is the trivial representation $\triv_n$ and $S_{(1^n)}$ the sign representation $\sgn_n$. For groups $H \subset G$, the notation $\downarrow^G_H$ denotes restriction from $G$-modules to $H$-modules and $\uparrow_H^G$ denotes induction.
  
The Schur functor construction establishes the correspondence between $\sym_n$-modules and functors from $\fmodq$ to $\rat$-vector spaces. For $M$ a $\sym_n$-module, the associated Schur functor is 
\[
V \mapsto V^{\otimes n}\otimes_{\sym_n} M,
\]
for $V \in \fmodq$, where $\sym_n$ acts by place permutations on $V^{\otimes n}$. Writing $T^n$ for the $n$th tensor product functor $V \mapsto V^{\otimes n}$ equipped with the place permutation action of $\sym_n$ (on the right), the Schur functor can be written as 
\[
T^n \otimes_{\sym_n} M.
\]
This is a polynomial functor that is homogeneous of degree $n$. (See \cite[Appendix I.A]{MR3443860} for polynomial functors in characteristic zero.) The Schur functor construction establishes an equivalence of categories between $ \sym_n$-modules and homogeneous polynomial functors of degree $n$; given such a polynomial functor $G$, the associated $\sym_n$-representation is given by 
the natural transformations
\[
\mathrm{Nat} ( T^n, G),
\]
where the $\sym_n$-action is given by the action on $T^n$. This can also be interpreted as a {\em cross-effect} functor, corresponding to passage to multilinear terms.

There is more structure: given two polynomial functors $F$ and $G$, one can form their pointwise tensor product, defined by $(F \otimes G) (V):= F(V) \otimes G(V)$. If $F$ and $G$ are homogeneous polynomial of degrees $m$ and $n$ respectively, then $F \otimes G$ is homogeneous polynomial of degree $m+n$. At the level of representations of the symmetric groups, this corresponds to the functor which sends a $\rat \sym_m$-module $M$ and a $\rat \sym_n$-module $N$ to the $\rat \sym_{m+n}$-module $(M \boxtimes N)\uparrow_{\sym _m \times \sym_n}^{\sym_{m+n}}$.

\begin{nota}
\label{nota:schur_functor}
For a partition $\lambda \vdash n$, the associated Schur functor is denoted $\schur_{\lambda}$, so that 
$\schur_\lambda (V) = V^{\otimes n}\otimes _{\sym_n} S_\lambda$.  
\end{nota}

\begin{exam}
For $\lambda = (n)$, the Schur functor $\schur_{(n)}$ identifies as the $n$th symmetric power functor $V \mapsto S^n (V) = (V^{\otimes n}) / \sym_n$. Over $\rat$, this is isomorphic to the $n$th divided power functor $V \mapsto \Gamma^n (V)= (V^{\otimes n})^{\sym_n}$.  There is a canonical surjection $V^{\otimes n} \twoheadrightarrow S^n (V)$ and a canonical inclusion $\Gamma^n (V) \hookrightarrow V^{\otimes n}$. 
 For $\lambda = (1^n)$, the Schur functor $\schur_{(1^n)}$ identifies as the $n$th exterior power functor $V \mapsto \Lambda^n (V)$. 

The $\nat$-graded functors $S^*$, $\Gamma^*$ and $\Lambda^*$ are all exponential functors (paying attention to signs in the case of $\Lambda^*$). In particular, for $a, b \in \nat$ and  $n = a+b$, there are natural  coproducts 
$\Lambda^n  \rightarrow \Lambda ^a  \otimes \Lambda^b $ and $\Gamma^n  \rightarrow \Gamma ^a  \otimes \Gamma^b$, and 
  natural products
$\Lambda ^a  \otimes \Lambda^b  \rightarrow \Lambda^n $ and 
$S^a  \otimes S^b  \rightarrow S^n$. 

To make these structures more explicit, we consider the evaluation on $V$ a vector space of dimension $t$, with chosen basis $\{x_1, \ldots , x_t\}$. Then $S^* (V)$ identifies as the polynomial algebra $\rat [x_1, \ldots , x_t]$;  $S^d (V)$ corresponds to the subspace of homogeneous polynomials of degree $d$. Then the products $S^a \otimes S^b \rightarrow S^{a+b}$ (evaluated on $V$) are given by the product in this polynomial algebra. The coproducts (evaluated on $V$) are given by equipping the polynomial algebra with the primitively-generated Hopf algebra structure with coproduct $\Delta$  determined by $\Delta x_i = x_1 \otimes 1 + 1 \otimes x_i$. Thus, for example, the coproduct $S^2 \rightarrow S^1 \otimes S^1$ is determined by $v w \mapsto v \otimes w + w \otimes v$, for any $v, w \in V$.

The exterior power functors can be treated by the same argument, by introducing a grading and using Koszul signs. Namely, we form the free graded commutative algebra on $sV$ (where $sV$ is $V$ placed in degree one) and equip this with the primitively-generated Hopf algebra structure. The underlying object can be identified with the exterior algebra on $x_1, \ldots , x_t$, where the grading corresponds to the polynomial degree.  The product (evaluated on $V$) then identifies as the wedge product $\wedge$. The coproduct is identified as in the case of the symmetric powers;  for example, the coproduct (evaluated on $V$) $\Lambda^2 \rightarrow \Lambda^1 \otimes \Lambda^1$ is given by $v \wedge w \mapsto v \otimes w - w \otimes v$.  
\end{exam}

\begin{exam}
\label{exam:LR_Pieri}
For two partitions $\lambda \vdash d$ and $\mu \vdash e $, the tensor product of the associated Schur functors $\schur _\lambda$ and $\schur_\mu$ is given by the Littlewood-Richardson rule. Namely, there is an isomorphism 
$$
\schur_\lambda \otimes \schur_\mu \cong \bigoplus_{\nu \vdash d+e} \schur_\nu^{\bigoplus c^\nu_{\lambda\mu}},
$$
where $c^\nu_{\lambda \mu}$ are the Littlewood-Richardson coefficients. Using the properties of the Schur functor construction recalled above, this follows from the classical Littlewood-Richardson rule for representations of the symmetric groups 
$$
(S_\lambda \boxtimes S_\mu)\uparrow_{\sym_d \times \sym_e}^{\sym_{d+e}}
\cong 
 \bigoplus_{\nu \vdash d+e} S_\nu^{\bigoplus c^\nu_{\lambda\mu}}.
$$

Specializing to the case where $\mu = (e)$, so that $\schur_\mu$ is the symmetric power $S^e$, this gives the {\em Pieri formula}:
$$
\schur_\lambda \otimes S^e \cong \bigoplus_{\substack{\nu \vdash d+e \\ \nu/ \lambda \in \mathrm{HS}}} \schur_\nu.
$$
Here, the condition $ \nu/ \lambda \in \mathrm{HS}$ states that the skew partition $\nu/ \lambda$ is a horizontal strip (using the convention that the Young diagram associated to a partition $\lambda$ has $\lambda_i$ boxes in the $i$th row). The horizontal strip condition means that the Young diagram of $\nu$ is obtained from that of $\lambda$ by adding at most one box in each column. (See \cite[Corollary 2.3.5]{MR1988690}, for example, paying attention to the fact that Weyman uses a different indexing convention.)

Taking $e=1$, this reduces to the classical Pieri rule: the sum is over all partitions $\nu \vdash d+1$ such that the Young diagram of $\nu$ contains that of $\lambda$.

There is an (equivalent) formulation of the Pieri formula for the case $\mu = (1^e)$, so that $\schur_\nu$ identifies as the exterior power functor $\Lambda^e$. In this case:
$$
\schur_\lambda \otimes \Lambda ^e \cong \bigoplus_{\substack{\nu \vdash d+e \\ \nu/ \lambda \in \mathrm{VS}}} \schur_\nu, 
$$
where $\mathrm{VS}$ now refers to vertical strips (exchanging the r\^ole of horizontal and vertical). 
\end{exam}

We record the following basic fact, which plays the r\^ole of Schur's lemma:

\begin{prop}
\label{prop:nat_schur}
For partitions $\lambda \vdash m$, $\mu \vdash n$,  
\begin{enumerate}
\item 
$\schur_\lambda$ is a simple functor from $\fmodq$ to $\rat$-vector spaces; 
\item 
$\mathrm{Nat} (\schur_\lambda, \schur_\mu ) \cong
\left\{ 
\begin{array}{ll}
 \rat & \lambda = \mu \\
 0 & \mbox{otherwise.}
\end{array}
\right.
$
\end{enumerate}
\end{prop}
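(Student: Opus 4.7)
The plan is to leverage the equivalence of categories between $\sym_n$-modules and homogeneous polynomial functors of degree $n$ already recalled in the excerpt, together with the standard scalar-action argument that detects homogeneous degree.

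First I would treat simplicity. Given a subfunctor $F \subseteq \schur_\lambda$ with $\lambda \vdash n$, the fact that scalar multiplication by $t \in \rat^\times$ on $V$ acts as $t^n$ on $\schur_\lambda(V)$ forces $F(V)$ to be stable under multiplication by $t^n$ for all $t$, and hence forces $F$ to be homogeneous polynomial of degree $n$. Under the equivalence of categories, $F$ therefore corresponds to a sub-$\sym_n$-module of $S_\lambda$; simplicity of $S_\lambda$ then gives $F = 0$ or $F = \schur_\lambda$.

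For the second statement, I would split according to whether $m = n$. If $m \neq n$, a natural transformation $\phi : \schur_\lambda \to \schur_\mu$ must intertwine the scalar actions, so $t^m \phi_V = t^n \phi_V$ for all $t \in \rat^\times$, forcing $\phi = 0$. If $m = n$, I would apply the adjunction/cross-effect interpretation of the excerpt: since $\schur_\lambda = T^n \otimes_{\sym_n} S_\lambda$, one has
\[
\mathrm{Nat}(\schur_\lambda, \schur_\mu) \cong \hom_{\sym_n}\bigl(S_\lambda, \mathrm{Nat}(T^n, \schur_\mu)\bigr) \cong \hom_{\sym_n}(S_\lambda, S_\mu),
\]
using $\mathrm{Nat}(T^n, \schur_\mu) \cong S_\mu$. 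Schur's lemma for $\sym_n$-modules then yields the claimed dimension.

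No step here is really an obstacle; the only mild subtlety is justifying the adjunction isomorphism above, but this is immediate from the universal property $\mathrm{Nat}(T^n, -) \cong (\,\cdot\,)_{\mathrm{mult}}$ combined with the fact that tensoring over $\sym_n$ is left adjoint to forgetting the $\sym_n$-action and passing to natural transformations out of $T^n$. Alternatively, one can invoke the equivalence of categories directly to reduce the computation of $\mathrm{Nat}(\schur_\lambda, \schur_\mu)$ to $\hom_{\sym_n}(S_\lambda, S_\mu)$ in one step.
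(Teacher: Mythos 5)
The paper records this proposition without proof, as a standard consequence of the Schur--Weyl correspondence between $\sym_n$-modules and homogeneous polynomial functors of degree $n$ (citing \cite[Appendix I.A]{MR3443860} earlier in the section); so there is no authorial argument to compare against. Your proof is a correct and standard derivation of the fact from the equivalence of categories, and it is complete in substance.

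One spot could be tightened. In part (1) you write that the scalar action ``forces $F(V)$ to be stable under multiplication by $t^n$ for all $t$, and hence forces $F$ to be homogeneous polynomial of degree $n$.'' Stability of a subspace under scalars is automatic and carries no information; what you actually want is that $F(t\cdot\id_V)$, being the restriction of $\schur_\lambda(t\cdot\id_V) = t^n\id$, equals $t^n\id_{F(V)}$, and you additionally need the (standard but non-trivial) fact that a subfunctor of a polynomial functor is itself polynomial. Once both points are in place, comparing with the $t^k$-eigenspace decomposition of a polynomial functor forces $F$ to be homogeneous of degree $n$, and the rest of your argument (transfer of simplicity across the equivalence) goes through. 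A cleaner phrasing is simply to observe that the equivalence between degree-$n$ homogeneous polynomial functors and $\sym_n$-modules is an equivalence of \emph{abelian} categories, and that the inclusion of the degree-$n$ subcategory into all functors is full and exact, so subobjects of $\schur_\lambda$ in the ambient functor category are the same as subobjects in the $\sym_n$-module category. Part (2) is fine: the $m\neq n$ case follows from the scalar-intertwining relation $t^m\phi_V = t^n\phi_V$ and $|\rat|=\infty$, and the $m=n$ case follows either from the tensor--hom adjunction $\mathrm{Nat}(T^n\otimes_{\sym_n}S_\lambda,\,-)\cong\hom_{\sym_n}(S_\lambda,\mathrm{Nat}(T^n,-))$ together with $\mathrm{Nat}(T^n,\schur_\mu)\cong S_\mu$, or directly from the equivalence of categories plus Schur's lemma for $\sym_n$.
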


For a partition $\lambda$, $l(\lambda)$ denotes its length, i.e., the number of non-zero terms in the partition. One has the following standard result on the connectivity of the simple Schur functors, which is a constituent of Schur-Weyl duality:

\begin{lem}
\label{lem:conn_schur}
For a partition $\lambda$,  $\schur_\lambda (V)$ is zero if and only if $\dim V < l(\lambda)$.
\end{lem}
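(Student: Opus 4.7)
The plan is to realize $\schur_\lambda$ via a Young symmetrizer and extract the claim from the antisymmetry of tensor products. Fix a Young tableau $T$ of shape $\lambda \vdash n$, and consider the associated Young symmetrizer $c_T = a_T b_T \in \rat[\sym_n]$, where $b_T = \sum_{q \in C_T} \sgn(q) q$ antisymmetrizes over the column stabilizer $C_T \subset \sym_n$ and $a_T = \sum_{p \in R_T} p$ symmetrizes over the row stabilizer $R_T$. Up to a nonzero scalar, $\rat[\sym_n] c_T \cong S_\lambda$ as left $\sym_n$-modules; by the definition recalled above, $\schur_\lambda(V)$ therefore identifies with the image of $c_T$ acting on $V^{\otimes n}$ from the right by place permutations.

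For the ``only if'' direction, suppose $\dim V < l(\lambda)$. The first column of $T$ has length $l(\lambda)$; since $b_T$ antisymmetrizes the tensor factors in the positions occupied by that column and $\Lambda^{l(\lambda)} V = 0$, the operator $b_T$ (and hence $c_T$) annihilates every simple tensor in $V^{\otimes n}$. It follows that $\schur_\lambda(V) = 0$.

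For the converse, suppose $d = \dim V \geq l(\lambda)$. I would invoke the standard description of $\schur_\lambda(V)$ by semistandard Young tableaux: $\dim \schur_\lambda(V)$ equals the number of fillings of $\lambda$ with entries in $\{1, \dots, d\}$ that are weakly increasing along rows and strictly increasing down columns. The all-$i$'s-in-row-$i$ filling is a valid semistandard tableau exactly when $d \geq l(\lambda)$, so the dimension is positive. The main obstacle, if one prefers to stay self-contained, is verifying non-vanishing directly: this can be done by exhibiting the explicit element $c_T(x) \in V^{\otimes n}$, where $x$ has $e_i$ in each cell of row $i$ of $T$ (with $e_1,\dots,e_d$ a basis of $V$), and tracking basis-vector tuples in the Young-symmetrizer expansion to rule out cancellation. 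Either route yields the lemma.
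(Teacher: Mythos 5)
The paper states this lemma without proof, citing it as a standard fact about Schur functors, so there is no argument in the paper to compare against. Your Young-symmetrizer proof is correct and is one of the textbook arguments. The vanishing direction is clean: the column-antisymmetrizer factor over the first column (of length $l(\lambda)$) already annihilates all of $V^{\otimes n}$ when $\dim V < l(\lambda)$, and hence so does $c_T$. For the converse you cite the semistandard-tableau dimension formula, which is perfectly legitimate, and you correctly indicate the self-contained alternative of evaluating $c_T$ on an explicit simple tensor to exhibit a nonzero vector. The one step worth making explicit is the passage from $V^{\otimes n}\otimes_{\sym_n}\rat[\sym_n]c_T$ to the image $V^{\otimes n}c_T$: this uses that $c_T$ is proportional to an idempotent of $\rat[\sym_n]$, so that $\rat[\sym_n]c_T$ is a projective direct summand and the coinvariants identify with the image of right multiplication by $c_T$.
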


\begin{proof}
(For completeness, we sketch a proof.) There is an explicit description of the Schur functors (see, for example, \cite[Chapter 2]{MR1988690}, noting that Weyman works over a general commutative ring and uses a different indexing convention). This implies, in particular,  that $\schur_\lambda $ is a subfunctor of  $\bigotimes_{j=1}^{\lambda_1} \Lambda ^{\mu_j}$, where $\mu := \lambda^\dagger$ is the conjugate partition, which has length $l (\mu) = \lambda_1$ (correspondingly, $\mu_1 = l (\lambda)$). The vanishing statement follows from the fact, for $t \in \nat$,  $\Lambda^t (V) $ is zero if and only if $\dim V< t$. 

That $\schur _\lambda (V)$ is non-zero for $\dim V = l (\lambda)$ follows from further analysis of the construction of $\schur_\lambda$. For example, this follows from \cite[Proposition 2.1.4]{MR1988690}.    
\end{proof}

We will also be working with functors from $\fmodq$ to $\rat$-vector spaces that are equipped with a $\sym_r$-action (for some fixed $r \in \nat$) and $\sym_r$-equivariant natural transformations. A basic example of such a functor is given by forming the tensor product of a homogeneous polynomial functor $F$ of degree $n \in \nat$ with a $\sym_r$-module $M$. This will be denoted by 
$
F\boxtimes M$.

We also work with  analytic functors (in the sense of \cite[Chapitre 4]{MR927763}). By definition, these are direct sums of homogeneous polynomial functors (not necessarily of the same degree).

\begin{nota}
\label{nota:components_analytic}
For $V \mapsto F(V)$ an analytic functor on $\fmodq$ and $n \in \nat$, write $F^{[n]}$ for the homogeneous component of degree $n$, so that one has the natural isomorphism
\[
 F(V) \cong \bigoplus_{n \in \nat} F^{[n]} (V).
\]
Thus, written without the variable $V$, $F \cong \bigoplus_{n \in \nat} F^{[n]}$. (We will also write $F(V)^{[n]}$ for $ F^{[n]} (V)$.) 
\end{nota}
 
\begin{exam}
\label{exam:lie_small_degree}
Consider the free Lie algebra functor $V \mapsto \lie (V)$, for $V \in \fmodq$. The underlying functor to $\rat$-vector spaces is analytic and one has the natural splitting
\[
\lie (V) \cong \bigoplus_{n \in \nat} \lie^{[n]} (V).
\]
Here $ \lie^{[n]} (V)$ identifies as  the subspace of $\lie (V)$  generated (as a vector space) by iterated brackets of length exactly $n$. 
In particular, $\lie^{[0]} (V) =0$, $\lie^{[1]}(V) = V$, $\lie ^{[2]} (V) \cong \Lambda^2 (V)$ and $\lie^{[3]} (V) \cong \schur_{(2,1)}(V)$. 
\end{exam}

%%%%%%%%%%%%%%%%%%%%%%%%%%%%%%%%%%%%%%%%%%%%%%%%%%%%%%%%%%
\subsection{A form of the  de Rham differential}
\label{subsect:dR}

For later use, we introduce  $\diff$, a form of the de Rham differential written using the divided power functors. For $a, b \in \nat$, with $b>0$, $\diff$ is the natural transformation 
\begin{eqnarray}
\label{eqn:diff}
\diff \colon
\Lambda^a \otimes \Gamma^b 
\rightarrow 
\Lambda^{a+1} \otimes \Gamma^{b-1} 
\end{eqnarray}
that is given by the composite 
\[
\Lambda^a \otimes \Gamma^b  
\rightarrow 
\Lambda^a  \otimes \Gamma^1 \otimes \Gamma^{b-1}  
\cong 
\Lambda^a  \otimes \Lambda^1 \otimes \Gamma^{b-1}  
\rightarrow 
\Lambda^{a+1} \otimes \Gamma^{b-1} 
\]
in which the first map is the coproduct of $\Gamma^*$ (which corresponds to the natural inclusion $(V^{\otimes b})^{\sym_b} \subset V \otimes (V^{\otimes b-1})^{\sym_{b-1}}$) and the second is the product of $\Lambda^*$.

One has the following characterization of $\diff$:

\begin{lem}
\label{lem:diff_characterization}
For $a,b \in \nat$ with $b>0$, $\diff$ is the unique map which fits into the commutative diagram of natural transformations
\begin{eqnarray}
\label{eqn:characterize_diff}
\xymatrix{
\Lambda^a \otimes \Gamma^b 
\ar[r]^(.4)\diff
\ar@{^(->}[d]
&
\Lambda^{a+1}  \otimes \Gamma^{b-1} 
\ar@{^(->}[d]
\\
\Lambda^a  \otimes T^{b}
\ar[r]
&
\Lambda^{a+1} \otimes T^{ b-1},
}
\end{eqnarray}  
where the bottom map is induced by the product $\mu : \Lambda^a  \otimes \Lambda^1 \rightarrow \Lambda^{a+1} $  and the vertical maps  are induced by the canonical inclusion $\Gamma^* \subset T^*$.
\end{lem}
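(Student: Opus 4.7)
The plan is to verify commutativity of (\ref{eqn:characterize_diff}) and deduce uniqueness from the fact that the right vertical map is a monomorphism. Neither step presents a real difficulty; the lemma is essentially a compatibility between the $\Gamma^*$-coproduct and the $\Lambda^*$-product with respect to the canonical inclusions into tensor powers.

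For commutativity, I would unpack both composites $\Lambda^a \otimes \Gamma^b \to \Lambda^{a+1} \otimes T^{b-1}$. Going right-then-down, the definition (\ref{eqn:diff}) of $\diff$ followed by the inclusion $\Lambda^{a+1} \otimes \Gamma^{b-1} \hookrightarrow \Lambda^{a+1} \otimes T^{b-1}$ consists of applying the $\Gamma^*$-coproduct $\Gamma^b \to \Gamma^1 \otimes \Gamma^{b-1}$, identifying $\Gamma^1 \cong \Lambda^1$, applying the $\Lambda^*$-product on the first two exterior factors, and finally including $\Gamma^{b-1} \hookrightarrow T^{b-1}$. Going down-then-right, the inclusion $\Gamma^b \hookrightarrow T^b$ is followed by $\mu \otimes \id$, with $T^b$ viewed as $T^1 \otimes T^{b-1}$. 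The crux is thus the commutativity of
\[
\xymatrix{
\Gamma^b \ar[r] \ar@{^(->}[d] & \Gamma^1 \otimes \Gamma^{b-1} \ar@{^(->}[d] \\
T^b \ar@{=}[r] & T^1 \otimes T^{b-1},
}
\]
where the top arrow is the $\Gamma^*$-coproduct and the verticals are the canonical inclusions. This square commutes because, on $V \in \fmodq$, the $\Gamma^*$-coproduct is precisely the factorization of the inclusion $(V^{\otimes b})^{\sym_b} \subset V^{\otimes b}$ through $V \otimes (V^{\otimes(b-1)})^{\sym_{b-1}}$, by the defining construction recalled in Section \ref{subsect:dR}. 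With this identification, both composites in (\ref{eqn:characterize_diff}) agree.

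For uniqueness, the right vertical map $\Lambda^{a+1} \otimes \Gamma^{b-1} \hookrightarrow \Lambda^{a+1} \otimes T^{b-1}$ is a monomorphism of polynomial functors, since $\Gamma^{b-1} \hookrightarrow T^{b-1}$ is pointwise injective and tensoring with $\Lambda^{a+1}$ over $\rat$ preserves injections. Consequently, any natural transformation $f \colon \Lambda^a \otimes \Gamma^b \to \Lambda^{a+1} \otimes \Gamma^{b-1}$ making (\ref{eqn:characterize_diff}) commute is determined by its composite with this right vertical, which by hypothesis coincides with the prescribed bottom-and-left composite; hence $f = \diff$. The hypothesis $a<b$ plays no role in the argument itself and presumably reflects the regime in which the characterization is to be applied in later sections.
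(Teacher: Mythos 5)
Your proof is correct and is essentially the only natural argument for this lemma, for which the paper offers no proof (it is stated as a basic observation). Both the reduction of commutativity to the compatibility square
\[
\xymatrix{
\Gamma^b \ar[r] \ar@{^(->}[d] & \Gamma^1 \otimes \Gamma^{b-1} \ar@{^(->}[d] \\
T^b \ar@{=}[r] & T^1 \otimes T^{b-1}
}
\]
(which holds by the paper's own description of the $\Gamma^*$-coproduct as the inclusion $(V^{\otimes b})^{\sym_b} \subset V \otimes (V^{\otimes b-1})^{\sym_{b-1}}$) and the uniqueness argument via injectivity of $\Lambda^{a+1}\otimes \Gamma^{b-1}\hookrightarrow \Lambda^{a+1}\otimes T^{b-1}$ are exactly right. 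Your closing remark that the hypothesis $a<b$ is not used in the argument is also accurate; the statement and proof go through for any $a,b\in\nat$ with $b>0$.
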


One has the well-known identification of the kernel and cokernel of $\diff$:

\begin{lem}
\label{lem:ker_coker_diff}
For positive integers $a,b$, the kernel and cokernel of $\diff$ identify respectively as 
\begin{eqnarray*}
\ker (\diff) &\cong & \schur_{(b+1,1^{a-1})}
\\
\mathrm{coker} (\diff) &\cong & \schur_{(b-1,1^{a+1})},
\end{eqnarray*} 
where, if $b=1$, the cokernel is understood to be zero.
\end{lem}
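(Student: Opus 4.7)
The plan is to compute the Pieri decompositions of both source and target of $\diff$, observe that they share exactly one common simple summand, and then invoke Proposition \ref{prop:nat_schur} to force any non-zero natural transformation between them to be, up to scalar, the projection-inclusion through that common summand.

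Applying the Pieri rule to $\schur_{(1^a)} \otimes \schur_{(b)}$ yields
\[
\Lambda^a \otimes \Gamma^b \cong \schur_{(b+1, 1^{a-1})} \oplus \schur_{(b, 1^a)}
\]
(indexed by the two partitions $\lambda \supset (1^a)$ of size $a+b$ for which $\lambda/(1^a)$ is a horizontal strip), and similarly
\[
\Lambda^{a+1} \otimes \Gamma^{b-1} \cong \schur_{(b, 1^a)} \oplus \schur_{(b-1, 1^{a+1})},
\]
with the second summand interpreted as zero when $b=1$, in which case the target reduces to $\Lambda^{a+1} = \schur_{(1^{a+1})} = \schur_{(b, 1^a)}$. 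The unique common isotypic component is thus $\schur_{(b,1^a)}$, so Proposition \ref{prop:nat_schur} forces the space of natural transformations between source and target to be one-dimensional, spanned by the composite of the projection onto and the inclusion of $\schur_{(b,1^a)}$. Consequently, any non-zero such natural transformation has kernel $\schur_{(b+1, 1^{a-1})}$ and cokernel $\schur_{(b-1, 1^{a+1})}$.

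It then suffices to verify $\diff \neq 0$. Using the characterisation of Lemma \ref{lem:diff_characterization}, one tests on the element $v_1 \wedge \cdots \wedge v_a \otimes v_{a+1}^{\otimes b}$ for linearly independent vectors $v_1, \ldots, v_{a+1} \in V$: its image in $\Lambda^{a+1} \otimes T^{b-1}$ contains the non-zero term $v_1 \wedge \cdots \wedge v_{a+1} \otimes v_{a+1}^{\otimes b-1}$, so $\diff$ does not vanish and must therefore be an isomorphism on the common $\schur_{(b,1^a)}$ summand. No serious obstacle arises; the principal content is the Pieri decomposition, and the remainder is a standard consequence of Schur's lemma for polynomial functors.
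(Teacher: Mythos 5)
Your proof is correct and takes exactly the approach the paper intends: the paper only remarks that the identification follows ``as usual, using the Pieri rule,'' and your argument supplies precisely those details — the two Pieri decompositions are multiplicity-free and share only the factor $\schur_{(b,1^a)}$, so Proposition~\ref{prop:nat_schur} reduces everything to checking $\diff \neq 0$, which your explicit element does.
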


\begin{proof}
We use the Pieri formula (see Example \ref{exam:LR_Pieri}). This implies that, for positive integers $a$ and $b$, there is an isomorphism
$$
\Lambda^a \otimes \Gamma ^b 
\cong 
\schur _{(b+1,1^{a-1})}
\oplus 
\schur_{(b, 1^a)}.
$$ 
The differential $\diff : \Lambda^a \otimes \Gamma ^b \rightarrow \Lambda^{a+1} \otimes \Gamma^{b-1}$ is clearly non-zero and, by the above, identifies as a natural transformation
$$
\schur _{(b+1,1^{a-1})}
\oplus 
\schur_{(b, 1^a)}
\stackrel{\diff}{\rightarrow} 
\schur _{(b,1^{a})}
\oplus 
\schur_{(b-1, 1^{a+1})}.
$$
The result thus follows from Schur's lemma (Proposition \ref{prop:nat_schur} here).
\end{proof}

\begin{exam}
\label{exam:schur_embedding}
For $r \geq 2$,  $\schur_{(r, 1^2)}$ identifies as the kernel of 
$
\diff \colon  
\Lambda^3  \otimes \Gamma^{r-1} 
\rightarrow 
\Lambda^{4}  \otimes \Gamma^{r-2}.  
$

Hence, by Lemma \ref{lem:diff_characterization}, one has 
$$
\schur_{(r, 1^2)} \cong \ker \big( 
\Lambda^3  \otimes \Gamma^{r-1} 
\hookrightarrow 
\Lambda^3 \otimes T^{r-1} 
\cong \Lambda^3 \otimes \Lambda^1 \otimes T^{r-2}
\rightarrow 
\Lambda^4 \otimes T^{r-2}
\big), 
$$
where the first map is given by the inclusion $\Gamma^{r-1} \subset T^{r-1}$ and the second by the product $\Lambda^3 \otimes \Lambda^1 \rightarrow \Lambda^4$, as in that Lemma.
\end{exam}

\section{The adjoint action and a reduction}
\label{sect:adj}

The main purpose of this section is to reduce the calculation of 
 $H_1 (\lie (V); \lie(V)^{\otimes r})^{[r+2]}$ (for $r\geq 2$)  to that of the kernel of the morphism $\dbar_r$ introduced in Notation \ref{nota:dbar}; this is achieved in Proposition \ref{prop:H_1_by_dbar}.

%%%%%%%%%%%%%%%%%%%%%%%%%%%%%%%%%%%%%%%%%%%%
\subsection{The adjoint action}

Consider $\lie (V)$, the free Lie algebra on $V\in \ob \fmodq$. Then $\lie (V)$ can be considered as a left $\lie (V)$-module for the adjoint action, and this structure is natural with respect to $V$. Hence, for each $r \in \nat$, $\lie (V)^{\otimes r}$ is a left $\lie (V)$-module (using the tensor product of $\lie (V)$-modules). For $r=0$ this gives the trivial module $\rat$ and, for  $r=1$,  the  adjoint representation. The symmetric group $\sym_r$ acts by place permutations of the tensor factors of $\lie (V)^{\otimes r}$, respecting the $\lie (V)$-module structure. All this structure is natural with respect to $V$. 

It follows that one can consider the Lie algebra homology 
$ 
H_* (\lie(V); \lie (V) ^{\otimes r}) 
$ 
and this takes values in $\sym_r$-modules,  naturally with respect to $V$.

Now, for any $\lie (V)$-module $M$, the Lie algebra homology $H_* (\lie (V); M)$ is  the homology of the complex $V \otimes M \rightarrow M$  (placed in homological degrees $1$ and $0$)  given by the restriction of the module structure. (This follows for example from the fact that $H_* (\lie (V); M)$ is naturally isomorphic to $\mathrm{Tor}_* ^{U \lie (V )} (\rat, M)$ and the universal enveloping algebra $U \lie (V)$ is naturally isomorphic to the tensor algebra $T(V)$. One concludes by using the projective resolution of $\rat$ as right $T(V)$-modules provided by $V \otimes T(V) \rightarrow T(V)$, where the map is given by  multiplication.) 

It follows that $ 
H_* (\lie(V); \lie (V) ^{\otimes r}) 
$  is  the homology of the complex
\[
V \otimes \lie (V) ^{\otimes r} 
\stackrel{\adbar_r}{\longrightarrow} 
\lie (V)^{\otimes r}
\]
given by restricting the adjoint action 
 $\mathrm{ad}_r : \lie (V) \otimes \lie (V) ^{\otimes r} 
\rightarrow 
\lie (V)^{\otimes r}$  to $V \subset \lie(V)$ on the first tensor factor. Thus one has 
\[
H_* (\lie (V); \lie(V)^{\otimes r}) \cong 
\left\{ 
\begin{array}{ll}
\coker (\adbar_r) & *=0 \\
\ker (\adbar _r) & *=1 \\
0 & \mbox{otherwise}.
\end{array}
\right.
\]

Since $\adbar_r$ is a natural transformation between  analytic functors (with respect to $V$), both $\ker (\adbar_r)$ and $\coker (\adbar_r)$ are analytic functors.

\begin{exam}
\label{exam:cyclie}
In the case $r=0$, the complex reduces to $
V \stackrel{0}{\rightarrow} \rat,  
$ 
giving the homology $H_* (\lie(V); \rat)$ with trivial coefficients. 

The case $r=1$ is more interesting; $\adbar_1$ fits into an exact complex:
\begin{eqnarray}
\label{eqn:exact_seq_cyclie}
0
\rightarrow 
\cyclie (V) 
\rightarrow 
V \otimes \lie (V)
\stackrel{\adbar_1}{\rightarrow} 
\lie (V) 
\rightarrow 
V 
\rightarrow 
0.
\end{eqnarray}
Here, $\cyclie(V)$ is the Schur functor that is associated to the cyclic Lie operad $\mathrm{CycLie}$. Recall that the cyclic Lie operad is constructed as for the Lie operad, but without a privileged `root' univalent vertex; namely $\mathrm{CycLie} (n)$ is given by the $\rat$-linear span of vertex-oriented, uni-trivalent  connected graphs, with univalent vertices labelled bijectively by $\{1, \ldots,  n\}$, modulo the AS (antisymmetry) and IHX (analogue of Jacobi) relations.

To explain this exact sequence, recall that, if $\liemod(n)$ denotes the $n$th Lie representation (corresponding to the functor $V \mapsto \lie(V)^{[n]}$ by the Schur correspondence, just as  $\mathrm{CycLie}(n)$ corresponds to $V \mapsto \cyclie(V)^{[n]}$), then one has the identification 
\[
\liemod(n) = \mathrm{CycLie}(n+1)\downarrow^{\sym_{n+1}}_{\sym_n},
\]
which shows how the Lie operad is obtained by designating a `root' univalent vertex. The collection of morphisms $\mathrm{CycLie}(n+1) \rightarrow \liemod(n)\uparrow^{\sym_{n+1}}_{\sym_n}$ obtained by adjunction (using that induction is equivalent to coinduction for finite groups) induce the natural transformation $\cyclie (V) \rightarrow V \otimes \lie (V)$  above and it is straightforward to check that one obtains a sequence as given.

 Exactness can be checked at the level of  representations of the symmetric groups by using restriction, exploiting the fact that there is an isomorphism of $\sym_n$-modules $\liemod(n+1) \downarrow ^{\sym_{n+1}}_{\sym_n} \cong \rat \sym_n$. Namely, if $n \geq 1$, the homogeneous polynomial degree $n+1$ part of (\ref{eqn:exact_seq_cyclie}) corresponds to the sequence of $\rat \sym_{n+1}$-modules:
\begin{eqnarray}
\label{eqn:cyclie_n+1_sequence}
\mathrm{CycLie}(n+1) 
\rightarrow 
\liemod (n)\uparrow _{\sym_n}^{\sym_{n+1}}
\rightarrow 
\liemod (n+1)
,
\end{eqnarray}
(the contribution from $V$ is zero). The  arrows are respectively injective and surjective, so we seek only to establish exactness in the middle. For this it is sufficient to show exactness after restricting to $\rat \sym_n$.

By the Pieri rule (or the Mackey formula), $\big(\liemod (n)\uparrow _{\sym_n}^{\sym_{n+1}}\big)\downarrow_{\sym_n}^{\sym_{n+1}}$ identifies as $\liemod(n) \oplus (\liemod(n)\downarrow_{\sym_{n-1}}^{\sym_{n}}\big)\uparrow _{\sym_{n-1}}^{\sym_{n}}$, hence as $\liemod(n) \oplus \rat \sym_{n}$. Thus, applying the restriction functor $\downarrow ^{\sym_{n+1}}_{\sym_n}$ to the sequence (\ref{eqn:cyclie_n+1_sequence}) gives the sequence of $\rat \sym_n$-modules
$$
\liemod (n) 
\rightarrow 
\liemod(n) \oplus \rat \sym_{n}
\rightarrow 
\rat \sym_n
.
$$
This is visibly short exact, whence the claimed exactness.  (The reader may prefer to rephrase the above simply as a dimension argument.) The exactness in the only non-trivial remaining case, corresponding to homogeneous polynomial degree one, is obvious.
\end{exam}

\begin{rem}
Stronger results (working over $\mathbb{Z}$) than those of Example \ref{exam:cyclie}  are established in   \cite{MR1943338,MR2240921}. Namely, in \cite{MR2240921} the author considers $\mathsf{D} (H)$, the kernel of the bracketing map $H \otimes \mathsf{L} (H) \rightarrow \mathsf{L} (H)$, where $\mathsf{L} (H)$ is the free Lie ring on the finitely-generated free abelian group $H$. He also considers $\mathcal{A}^t(H)$, defined similarly to  $\cyclie (V) $, but working over $\mathbb{Z}$. There is a comparison map $\mathcal{A}^t (H) \rightarrow \mathsf{D} (H)$ and the author gives precise information on the failure of this to be an isomorphism. In particular,  as a consequence of \cite[Theorem 2.5]{MR2240921}, he recovers the fact that $\mathcal{A}^t (H) \otimes \rat \rightarrow \mathsf{D} (H) \otimes \rat$ is an isomorphism. This corresponds to the exactness of (\ref{eqn:exact_seq_cyclie}).
\end{rem}

Here we are interested in $H_1 (\lie (V); \lie(V)^{\otimes r})^{[n]}$ for small $n$, considered as a functor of $V \in \ob \fmodq$ equipped with a $\sym_r$-action. Using the notation introduced in Section \ref{sect:prelim}, one has the basic result:

\begin{prop}
\label{prop:small_n}
For $r, n \in \nat$, there is a $\sym_r$-equivariant natural isomorphism:
\[
H_1 (\lie (V); \lie(V)^{\otimes r})^{[n]} = \left\{ 
\begin{array}{ll}
0 & n \leq r \\
\Gamma^{r+1} (V) \boxtimes \triv_r & n = r+1.
\end{array}
\right.
\]
\end{prop}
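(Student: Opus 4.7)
The plan is to read off $H_1(\lie(V); \lie(V)^{\otimes r})^{[n]}$ directly from the description as $\ker(\adbar_r)^{[n]}$ given in Section~\ref{sect:adj}, using the splitting $\lie(V) = \bigoplus_{k\geq 1} \lie^{[k]}(V)$ together with $\lie^{[1]}(V) = V$ and $\lie^{[2]}(V) \cong \Lambda^2 V$ from Example~\ref{exam:lie_small_degree}.

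For the vanishing when $n \leq r$, the argument is a degree count: since $\lie^{[0]}(V) = 0$, any nonzero homogeneous piece of $\lie(V)^{\otimes r}$ has total polynomial degree at least $r$, so $(V \otimes \lie(V)^{\otimes r})^{[n]}$ itself vanishes whenever $n-1 < r$, and $H_1^{[n]}$ inherits the vanishing.

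For $n=r+1$, I would first identify both sides of the restriction of $\adbar_r$ to polynomial degree $r+1$. The source $(V \otimes \lie(V)^{\otimes r})^{[r+1]}$ is forced to be $V \otimes V^{\otimes r}$ since every Lie factor must sit in degree $1$, while the target $(\lie(V)^{\otimes r})^{[r+1]}$ decomposes as $\bigoplus_{i=1}^r V^{\otimes (i-1)} \otimes \Lambda^2 V \otimes V^{\otimes (r-i)}$, where $i$ records the unique tensor factor living in $\lie^{[2]}$. Under these identifications, the $i$th component of $\adbar_r^{[r+1]}$ is, up to a reordering of tensor factors, the antisymmetrization in the $0$th and $i$th positions of $V^{\otimes r+1}$.

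The final step is to identify the kernel of the resulting collection of antisymmetrizations. Since the kernel of the antisymmetrization in positions $(0,i)$ is precisely the subspace symmetric in those two positions, $\ker(\adbar_r^{[r+1]})$ is the subspace of $V^{\otimes r+1}$ fixed by each transposition $(0\,i)$ for $i = 1, \ldots, r$; because these transpositions generate $\sym_{r+1}$, this kernel equals $(V^{\otimes r+1})^{\sym_{r+1}} = \Gamma^{r+1}(V)$. The $\sym_r$-action by place permutations on $\lie(V)^{\otimes r}$ permutes only positions $1, \ldots, r$ of $V^{\otimes r+1}$, hence acts trivially on the $\sym_{r+1}$-invariants, accounting for the $\triv_r$ factor. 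I do not expect any significant obstacle; the only point of care is keeping track of the distinguished role played by position $0$ and recording that the transpositions $(0\,i)$ do generate the full symmetric group $\sym_{r+1}$.
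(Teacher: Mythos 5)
Your proof is correct and takes essentially the same approach as the paper: both start from the same identification of $\adbar_r^{[r+1]}$ as a map $V\otimes V^{\otimes r}\to \bigoplus_i V^{\otimes i}\otimes\Lambda^2 V\otimes V^{\otimes r-1-i}$ whose components are pairwise antisymmetrizations, and both ultimately rest on the fact that the transpositions $(0\,i)$ generate $\sym_{r+1}$, so that the common kernel is $\Gamma^{r+1}(V)$. The only difference is presentational: the paper establishes the case $r=1$ via the short exact sequence $0\to\Gamma^2\to V^{\otimes 2}\to\Lambda^2\to 0$ and then invokes an inductive argument (pointing to the proof of Lemma~\ref{lem:intersect_lambda}, which contains the same ``generating transpositions'' idea), whereas you carry out the generation argument directly and uniformly in $r$. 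Your version is marginally more self-contained; the paper's version makes the parallel with Lemma~\ref{lem:intersect_lambda} explicit, which is reused later. The $\sym_r$-equivariance discussion in both is identical. No gaps.
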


\begin{proof}
The case $n \leq r$ is clear, since $(V \otimes \lie(V) ^{\otimes r})^{[n]}$ is zero for $n \leq r$. 
 The case $n=r+1$ is shown as follows. The case $r=0$ is immediate.  For $r\geq 1$, one uses the
 fact that (for $r \geq 1$) $\adbar_r^{[r+1]}$ identifies as an explicit map
\begin{eqnarray}
\label{eqn:adbar_1}
V \otimes V^{\otimes r} \rightarrow \bigoplus_{i=0}^{r-1} V^{\otimes i} \otimes \Lambda^2 (V) \otimes  V^{\otimes r-1-i}.
\end{eqnarray}
The canonical inclusion $\Gamma^{r+1}(V) \hookrightarrow V \otimes V^{\otimes r}$ clearly maps to the kernel of this; moreover, the image is invariant under the action of $\sym_r$, by definition of the divided power functor. Hence, to prove the result, it suffices to show that, neglecting the $\sym_r$-action, the kernel is $\Gamma^{r+1}(V) $.

For $r=1$, the map (\ref{eqn:adbar_1}) gives the natural short exact sequence
\[
0
\rightarrow \Gamma^2 (V) \rightarrow V^{\otimes 2} \stackrel{\adbar_1}{\rightarrow}
\Lambda^2 (V) 
\rightarrow 
0,
\]
identifying the kernel as required. The cases $r>1$ then follow by an inductive argument (cf. the proof of Lemma \ref{lem:intersect_lambda}).
\end{proof}

The  case $n=r+2$ stated below is the main result of this paper (Theorem \ref{THM} of the Introduction). 

\begin{thm}
\label{thm:main}
For $r \in \nat$, there is a $\sym_r$-equivariant natural isomorphism:
\[
H_1 (\lie (V); \lie(V)^{\otimes r})^{[r+2]}
\cong 
\left\{ 
\begin{array}{ll}
0 & r=0\\
\Lambda^3 (V) \boxtimes \sgn_1 & r=1 \\
\Lambda^{r+2} (V) \boxtimes \sgn_r \   \oplus \  \schur_{(r, 1^2)} (V) \boxtimes \triv_r
& r>1.
\end{array}
\right.
\]
\end{thm}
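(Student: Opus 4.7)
The plan is to determine $\ker(\adbar_r^{[r+2]})$ by first making the relevant complex concrete, then invoking the reduction of Proposition \ref{prop:H_1_by_dbar} to the kernel of $\dbar_r$, and finally running an induction on $r$ whose base cases $r \le 3$ are treated separately. The bidegree constraint forces
\[
(V \otimes \lie(V)^{\otimes r})^{[r+2]} \cong \bigoplus_{i=1}^{r} V \otimes V^{\otimes i-1} \otimes \Lambda^2(V) \otimes V^{\otimes r-i},
\]
since among the $r$ Lie factors (of total degree $r+1$) exactly one must be $\lie^{[2]} \cong \Lambda^2$ and the remainder must equal $V = \lie^{[1]}$. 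Similarly $(\lie(V)^{\otimes r})^{[r+2]}$ splits into two families: a single factor of degree $3$ contributing $\schur_{(2,1)} = \lie^{[3]}$ in one slot, or two factors of degree $2$ contributing two copies of $\Lambda^2$. Under this description $\adbar_r^{[r+2]}$ becomes an explicit sum of bracket-insertions with a transparent $\sym_r$-action.

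Once reduced via Proposition \ref{prop:H_1_by_dbar} to $\ker(\dbar_r)$, the two expected summands admit direct constructions. The form of the conclusion and the presence of the apparatus of Section \ref{subsect:dR} strongly suggest that $\dbar_r$ is closely related to the de Rham differential $\diff$; indeed, Example \ref{exam:schur_embedding} realizes $\schur_{(r,1^2)}$ precisely as $\ker(\diff : \Lambda^3 \otimes \Gamma^{r-1} \to \Lambda^4 \otimes \Gamma^{r-2})$, which would yield the $\triv_r$-isotypic summand $\schur_{(r,1^2)}(V) \boxtimes \triv_r$ after applying Lemma \ref{lem:diff_characterization} and Lemma \ref{lem:ker_coker_diff}. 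The $\sgn_r$-isotypic summand $\Lambda^{r+2}(V) \boxtimes \sgn_r$ I would produce by antisymmetrizing the $r$ Lie slots together with the leading $V$ factor to a top exterior power and checking directly that this antisymmetrization is killed by $\adbar_r^{[r+2]}$.

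The crux is showing that these two summands exhaust $\ker(\adbar_r^{[r+2]})$. I would argue by induction on $r$, with base cases $r \in \{0,1,2\}$ following from Proposition \ref{prop:small_n} supplemented by a short direct check at $r=2$, and $r = 3$ supplied by Section \ref{sect:case_r3}. For $r \ge 4$ I would restrict the $\sym_r$-action along $\sym_{r-1} \hookrightarrow \sym_r$ fixing one chosen tensor slot; this restriction carries $\adbar_r^{[r+2]}$ to a complex whose kernel is controlled, up to a filtration corresponding to the degree carried by the chosen factor, by $H_1 (\lie(V); \lie(V)^{\otimes r-1})^{[r+1]}$ and therefore, by the inductive hypothesis, by $\Lambda^{r+1}(V) \boxtimes \sgn_{r-1} \oplus \schur_{(r-1,1^2)}(V) \boxtimes \triv_{r-1}$. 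Combined with the branching rule for simple $\sym_r$-modules and with Proposition \ref{prop:nat_schur} to rule out extraneous polynomial constituents of degree $r+2$, the admissible $\sym_r$-types are pinned down to exactly the two appearing in the statement. The main obstacle is the delicate bookkeeping in this restriction-and-branching step: matching each surviving $\sym_{r-1}$-piece to a definite $\sym_r$-summand and closing off every other branching possibility, which is where the bulk of the actual work will go.
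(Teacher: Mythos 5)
Your plan captures the correct high-level architecture — reduce to $\ker\dbar_r$ via Proposition \ref{prop:H_1_by_dbar}, exhibit the two expected summands (exactly as the paper does in Proposition \ref{prop:lower_bound_ker}, including the $\diff$-characterization of $\schur_{(r,1^2)}$ from Example \ref{exam:schur_embedding}), handle $r\le 3$ separately, and induct on $r$ for the upper bound. However, the inductive step as you sketch it has a genuine gap.

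Your proposal restricts along a single inclusion $\sym_{r-1}\hookrightarrow\sym_r$, filters by the degree in the distinguished slot, matches the associated graded to $H_1(\lie(V);\lie(V)^{\otimes r-1})^{[r+1]}$, and then tries to recover the $\sym_r$-structure by branching together with Proposition \ref{prop:nat_schur}. This does not pin down the answer. First, once you have reduced to $\dbar_r$ the distinguished slot always carries degree $1$, so the proposed filtration collapses; if you instead work with the full complex $\adbar_r^{[r+2]}$ before the reduction, the filtration has nontrivial pieces in degrees $2$ and $3$ as well, which contribute more than the $H_1^{[r+1]}$ term and must be dealt with. Second, and more fundamentally, knowing the restriction of a $\sym_{r+2}\times\sym_r$-bimodule to $\sym_{r+2}\times\sym_{r-1}$ together with a degree constraint is not enough to determine it: Proposition \ref{prop:nat_schur} is just Schur's lemma and does not by itself exclude the ``extraneous'' constituents $\schur_{(2,1^r)}$, $\schur_{(r-1,2,1)}$, $\schur_{(r-1,1^3)}$ that appear when you tensor the inductive answer by $\Lambda^1$.

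The paper closes this gap by a different device, using \emph{two} face-inclusions rather than one restriction. Via Corollary \ref{cor:ell_inclusion} with $\ell=1$, $\ker\dbar_r$ sits inside $\Lambda^3\otimes T^{r-1}$; dropping the cocycle conditions involving slot $r$, respectively slot $r-1$, gives
\[
\ker\dbar_r\subset\ker\dbar_{r-1}\otimes\Lambda^1
\quad\text{and}\quad
\ker\dbar_r\subset(\id\otimes\tau)\big(\ker\dbar_{r-1}\otimes\Lambda^1\big),
\]
and the work is to compute the intersection of these two containers. The computation (Lemmas \ref{lem:intersect_lambda} and \ref{lem:intersect_schur}) hinges on the fact that for $r>3$ the tensor products $\Lambda^{r+1}\otimes\Lambda^1$ and $\schur_{(r-1,1^2)}\otimes\Lambda^1$ have no common composition factors, so the intersection decomposes into two independent pieces, each identifiable by hand. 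This disjointness fails precisely at $r=3$ (the factor $\schur_{(2,1^3)}$ is shared), which is why that case needs the bespoke eigenvalue analysis of Section \ref{sect:case_r3}; your proposal relies on Section \ref{sect:case_r3} for $r=3$ without noticing that the same obstruction is what forces the choice of method for $r>3$. You would need to replace ``restriction plus branching'' by this explicit two-face intersection (or something equivalent) for the inductive step to close.
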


\begin{exam}
\label{exam:r=1}
The case $r=1$ of Theorem \ref{thm:main} is elementary. The requisite complex is 
\[
V \otimes \Lambda^2 (V) \stackrel{\adbar_1^{[3]}}{\twoheadrightarrow} \schur_{(2,1)}(V),
\]
using the identifications $\lie(V)^{[2]} = \Lambda^2 (V)$ and $\lie(V)^{[3]} = \schur_{(2,1)}(V)$; the map is surjective since $\lie (V)$ is generated as a Lie algebra by $V$.
 By the Pieri rule, $V \otimes \Lambda^2 (V) \cong \Lambda^3(V) \oplus  \schur_{(2,1)}(V)$, giving the identification of the kernel.
\end{exam}

%%%%%%%%%%%%%%%%%%%%%%%%%%%%%%%%%%%%%%%%%%%%%%%%%%%%%%%%%%%%%%%%%%%%%%%%%%%%%%%
\subsection{Reduction of the problem}
\label{subsect:reduce}

To calculate $H_1 (\lie (V); \lie(V)^{\otimes r})^{[r+2]}$, we need to calculate the kernel of 
\[ 
\adbar_r ^{[r+2]} \ : \ 
\Big( V \otimes \lie (V)^{\otimes r} \Big)^{[r+2]} 
\rightarrow 
\Big (\lie(V)^{\otimes r} \Big) ^{[r+2]}.
\]

This can be described explicitly as follows:

\begin{lem}
\label{lem:identify_[r+2]} 
For $r \geq 1$, there are natural $\sym_r$-equivariant isomorphisms
\begin{eqnarray*}
\Big( V \otimes \lie (V)^{\otimes r} \Big)^{[r+2]} 
&\cong & 
V \otimes (\Lambda^2 (V) \otimes V^{\otimes r-1} ) \uparrow_{\sym_{r-1}}^{\sym_r}
\\
\Big (\lie(V)^{\otimes r} \Big) ^{[r+2]}
&\cong& 
 (\schur_{(2,1)} (V) \otimes V^{\otimes r-1} ) \uparrow_{\sym_{r-1}}^{\sym_r}
\ \quad \oplus \ \quad (\Lambda^2 (V) \otimes \Lambda^2 (V) \otimes V^{\otimes r-2}) \uparrow_{\sym_2 \times \sym_{r-2}}^{\sym_r},
 \end{eqnarray*}
 for the obvious actions of the symmetric groups by place permutations (if $r =1$, the final term is understood to be zero).
\end{lem}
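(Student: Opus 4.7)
The plan is to expand each tensor power using the grading $\lie(V) = \bigoplus_{n \geq 1} \lie^{[n]}(V)$ (since $\lie^{[0]}=0$) and then isolate the total-degree-$(r+2)$ summand. For any analytic functors $A, B$ one has $(A \otimes B)^{[n]} = \bigoplus_{i+j=n} A^{[i]} \otimes B^{[j]}$, so
\[
\Big(\lie(V)^{\otimes r}\Big)^{[m]} \ \cong \ \bigoplus_{\substack{n_1+\cdots+n_r = m \\ n_i \geq 1}} \lie^{[n_1]}(V) \otimes \cdots \otimes \lie^{[n_r]}(V).
\]

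First I would treat $(V \otimes \lie(V)^{\otimes r})^{[r+2]}$. The $V$ factor uses degree $1$, so the remaining $r$ Lie tensor factors must contribute total degree $r+1$ with each $n_i \geq 1$; hence exactly one $n_i$ equals $2$ and the others equal $1$. Using $\lie^{[1]}=V$ and $\lie^{[2]}=\Lambda^2(V)$, the summand corresponding to $n_i=2$ is $V^{\otimes i-1}\otimes \Lambda^2(V) \otimes V^{\otimes r-i}$. Summing over $i \in \{1,\ldots,r\}$ gives the $\sym_r$-orbit of $\Lambda^2(V) \otimes V^{\otimes r-1}$ (with the $\Lambda^2(V)$ placed in position $1$) under place permutations; the stabilizer of this configuration is exactly $\sym_{r-1}$ (permuting the remaining $V$'s), yielding the induced representation as stated. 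The same argument gives the expression for the tensor factor carrying $V$ on the left.

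For $(\lie(V)^{\otimes r})^{[r+2]}$, the partitions of $r+2$ into $r$ positive parts are exactly $(3, 1^{r-1})$ and $(2, 2, 1^{r-2})$ (the latter only for $r \geq 2$). Using $\lie^{[3]} \cong \schur_{(2,1)}(V)$, the $(3, 1^{r-1})$-type terms combine into the induced representation $(\schur_{(2,1)}(V) \otimes V^{\otimes r-1})\uparrow_{\sym_{r-1}}^{\sym_r}$, exactly as in the previous paragraph. The $(2, 2, 1^{r-2})$-type terms correspond to choosing the two positions carrying $\Lambda^2(V)$; since the two $\Lambda^2$-factors are interchangeable, the stabilizer is $\sym_2 \times \sym_{r-2}$, producing the induced representation $(\Lambda^2(V) \otimes \Lambda^2(V) \otimes V^{\otimes r-2})\uparrow_{\sym_2 \times \sym_{r-2}}^{\sym_r}$. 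When $r=1$ this partition does not exist and the term vanishes as stated.

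There is no serious obstacle: the content is purely bookkeeping with the multigrading and induced-representation description of $\sym_r$-orbits of tensor configurations. The only point requiring care is checking that the symmetric group actions on each induced representation agree with the obvious place-permutation action on the original tensor product; this is immediate from the observation that place permutation acts transitively on the set of positions of the distinguished $\lie^{[2]}$ or $\lie^{[3]}$ factors with stabilizer the evident Young subgroup.
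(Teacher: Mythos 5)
Your argument is correct and is essentially the intended one: the paper states this lemma without giving a proof, treating it as routine bookkeeping with the grading $\lie(V)=\bigoplus_{n\geq 1}\lie^{[n]}(V)$ (using $\lie^{[1]}=V$, $\lie^{[2]}=\Lambda^2(V)$, $\lie^{[3]}=\schur_{(2,1)}(V)$) together with the standard identification of the $\sym_r$-orbit of a tensor configuration as an induced module from its Young-subgroup stabilizer. Your enumeration of the compositions of $r+1$ (resp. $r+2$) into $r$ positive parts and the resulting stabilizers $\sym_{r-1}$ and $\sym_2\times\sym_{r-2}$ is exactly what the lemma's induced-representation description encodes.
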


\begin{proof}
We use the identifications in Example \ref{exam:lie_small_degree}, in particular $\lie (V)^{[0]} = 0$ and $\lie (V)^{[1]} = V$.

For the first point, we use the isomorphism $\big(V \otimes \lie (V)^{\otimes r} \big)^{[r+2]} \cong  V \otimes (\lie(V)^{\otimes r})^{[r+1]}$. 
Then we have the identification
$$
(\lie(V)^{\otimes r})^{[r+1]}
\ \cong \ 
\bigoplus_{\substack{(a_1, \ldots ,a_r)\in \nat_*^r \\ \sum_i a_i =r+1}} \bigotimes_{i=1}^r \lie (V)^{[a_i]},
$$  
where the sum is over sequences of $r$ positive integers that sum to $r+1$. The only solutions to the latter are when exactly one $a_i$ is $2$, all the others being equal to $1$. Since $\lie(V)^{[2]} \cong \Lambda^2 (V)$, taking into account the $\sym_r$-action, this gives the stated result. 

 The second identification is proved similarly; in this case, the sum is over sequences $(0<a_i \mid 1 \leq i \leq r)$ such that $\sum_{i=1}^r a_i = r+2$. There are two different types of solution: either exactly one of the $a_i$'s is equal to $3$ and the others are all equal to $1$; or exactly two of the $a_i$'s are equal to $2$ and the others are all equal to $1$. Since $\lie (V)^{[3]} \cong \schur_{(2,1)} (V)$, the result follows as above.
\end{proof}

The  adjoint action $\adbar_1$ gives the canonical projections
\begin{eqnarray*}
(V\otimes \lie (V))^{[2]} \cong V \otimes V &\twoheadrightarrow &\lie (V)^{[2]} = \Lambda^2 (V) \\ 
(V \otimes \lie (V))^{[3]} \cong V \otimes \Lambda^2 (V) &\twoheadrightarrow &\lie (V)^{[3]} = \schur_{(2,1)} (V), 
\end{eqnarray*} 
with kernels $\Gamma^2 (V)$ and $\Lambda^3 (V)$ respectively. These extend to define the components $\delta'_r$ and $\delta''_r$ of $\adbar_r^{[r+2]}$:
\begin{eqnarray*}
\delta'_r &:&  V \otimes (\Lambda^2  (V) \otimes V^{\otimes r-1})\uparrow _{\sym_{r-1} }^{\sym_r} 
\rightarrow 
(\schur_{(2,1)}  (V) \otimes V^{\otimes r-1})\uparrow _{\sym_{r-1} }^{\sym_r} 
\\
\delta''_r &:& V \otimes (\Lambda^2  (V) \otimes V^{\otimes r-1})\uparrow _{\sym_{r-1} }^{\sym_r} 
\rightarrow (\Lambda^2  (V) \otimes \Lambda^2 (V) \otimes  V^{\otimes r-2})\uparrow _{\sym_2 \times \sym_{r-2} }^{\sym_r}.
\end{eqnarray*}

By Frobenius reciprocity (i.e., using that $\uparrow _{\sym_{r-1} }^{\sym_r} $ is left adjoint to $\downarrow _{\sym_{r-1} }^{\sym_r}$), these are equivalent to $\sym_{r-1}$-equivariant maps: 
 \begin{eqnarray*}
\widetilde{\delta'_r} &:&  V \otimes (\Lambda^2  (V) \otimes V^{\otimes r-1})
\rightarrow 
(\schur_{(2,1)}  (V) \otimes V^{\otimes r-1})\uparrow _{\sym_{r-1} }^{\sym_r} 
\\
\widetilde{\delta''_r} &:& V \otimes (\Lambda^2  (V) \otimes V^{\otimes r-1})
\rightarrow (\Lambda^2  (V) \otimes \Lambda^2 (V) \otimes  V^{\otimes r-2})\uparrow _{\sym_2 \times \sym_{r-2} }^{\sym_r},
\end{eqnarray*}
using the appropriate restricted $\sym_{r-1}$-module structure on the codomains (not indicated in the notation).

The morphism $\widetilde{\delta'_r}$ is easily described: it is the composite of 
$$
V \otimes \Lambda^2  (V) \otimes V^{\otimes r-1}
\rightarrow 
\schur_{(2,1)}  (V) \otimes V^{\otimes r-1}$$ 
(given by applying $\adbar_1$ to the first two tensor factors of the domain) with the obvious $\sym_{r-1}$-equivariant inclusion $ \schur_{(2,1)}  (V) \otimes V^{\otimes r-1}
\subset (\schur_{(2,1)}  (V) \otimes V^{\otimes r-1})\uparrow _{\sym_{r-1} }^{\sym_r}$.

Likewise, the morphism  $\widetilde{\delta''_r}$ identifies as the composite of 
\[
V \otimes ( \Lambda^2  (V) \otimes V^{\otimes r-1})
\rightarrow 
\bigoplus_{1 < k \leq r} 
(\Lambda^2 (V) \otimes V^{\otimes k-2} \otimes \Lambda^2 (V) \otimes V^{\otimes r-k})
\]
(the $k$th component of which is given by applying $\adbar_1$ to the first and $(k+1)$st tensor factors of the domain) with the $\sym_{r-1}$-equivariant inclusion of the codomain into $(\Lambda^2  (V) \otimes \Lambda^2 (V) \otimes  V^{\otimes r-2})\uparrow _{\sym_2 \times \sym_{r-2} }^{\sym_r}$.

\begin{lem}
\label{lem:ker_delta'}
The kernel of $\delta'_r$ is  $\sym_r$-equivariantly naturally isomorphic to
$
(\Lambda^3  (V) \otimes V^{\otimes r-1})\uparrow _{\sym_{r-1} }^{\sym_r}, 
$
which is considered as a $\sym_r$-submodule of $V \otimes (\Lambda^2 (V) \otimes V^{\otimes r-1})\uparrow _{\sym_{r-1} }^{\sym_r}$ via the inclusion induced up from the $\sym_{r-1}$-equivariant map  
\[
\Lambda^3  (V) \otimes V^{\otimes r-1} \hookrightarrow V \otimes \Lambda^2 V \otimes V^{\otimes r-1}
\] 
given by applying the inclusion $\Lambda^3 (V) 
\rightarrow 
V \otimes \Lambda^2 (V) $ to the first factor.
\end{lem}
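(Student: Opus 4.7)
The plan is to recognize $\delta'_r$ as arising, via induction from $\sym_{r-1}$ to $\sym_r$, from a single $\sym_{r-1}$-equivariant map, and then to invoke exactness of induction to read off the kernel.

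First, since $V$ carries the trivial $\sym_r$-action, the source of $\delta'_r$ can be rewritten as $(V \otimes \Lambda^2(V) \otimes V^{\otimes r-1})\uparrow_{\sym_{r-1}}^{\sym_r}$, and analogously for the target. The next step is to identify $\delta'_r$ as the induction of the $\sym_{r-1}$-equivariant map
\[
\adbar_1^{[3]} \otimes \id_{V^{\otimes r-1}} \colon V \otimes \Lambda^2(V) \otimes V^{\otimes r-1} \rightarrow \schur_{(2,1)}(V) \otimes V^{\otimes r-1}.
\]
The key observation is that, in the matrix description of $\delta'_r$ with $i$ indexing the position of the $\Lambda^2(V)$ factor in the source and $j$ indexing the position of the $\schur_{(2,1)}(V)$ factor in the target, the off-diagonal entries vanish: producing a $\lie(V)^{[3]}$ factor at position $j$ of the target requires bracketing the external $V$ with a $\lie(V)^{[2]}$ factor of the source, and the sole such factor sits at position $i$, forcing $j = i$. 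This is the main substantive point of the proof.

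Next, Example \ref{exam:r=1} (via the Pieri decomposition $V \otimes \Lambda^2(V) \cong \Lambda^3(V) \oplus \schur_{(2,1)}(V)$) identifies the kernel of $\adbar_1^{[3]}$ with the canonical inclusion $\Lambda^3(V) \hookrightarrow V \otimes \Lambda^2(V)$. Tensoring with the exact functor $(-) \otimes V^{\otimes r-1}$ then yields $\ker(\adbar_1^{[3]} \otimes \id_{V^{\otimes r-1}}) \cong \Lambda^3(V) \otimes V^{\otimes r-1}$, embedded by applying $\Lambda^3(V) \hookrightarrow V \otimes \Lambda^2(V)$ to the first factor, exactly as described in the lemma.

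Finally, since induction $(-)\uparrow_{\sym_{r-1}}^{\sym_r}$ is exact on $\rat\sym_{r-1}$-modules, it commutes with the formation of kernels, giving $\ker(\delta'_r) \cong (\Lambda^3(V) \otimes V^{\otimes r-1})\uparrow_{\sym_{r-1}}^{\sym_r}$ with the stated embedding. Everything after the identification of $\delta'_r$ as an induced map is formal; the only real content is the degree bookkeeping in the free Lie algebra used to rule out the off-diagonal components of $\delta'_r$.
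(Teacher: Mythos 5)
Your proof is correct and takes the same route the paper intends, which the paper compresses to a single sentence ("This generalizes the case $r=1$"). The substantive detail you supply—that the off-diagonal matrix entries of $\delta'_r$ vanish because bracketing the external $V$ into any slot other than the one carrying $\Lambda^2(V)$ produces two $\Lambda^2$ factors and hence lands in the $\delta''_r$ codomain—is precisely what identifies $\delta'_r$ as the induced map $(\adbar_1^{[3]} \otimes \id_{V^{\otimes r-1}})\uparrow_{\sym_{r-1}}^{\sym_r}$, after which exactness of induction transports the $r=1$ identification of the kernel.
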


\begin{proof}
This generalizes the case $r=1$ given in Example \ref{exam:r=1}. 
\end{proof}

It is convenient to write the expressions without the variable $V$; for example, $V^{\otimes r-1}$ is replaced by the functor $T^{r-1}$.

\begin{nota}
\label{nota:dbar}
Write $\dbar_r$ for the restriction of $\delta''_r$ to the kernel of $\delta'_r$:
\[
\dbar_r \ : \ 
(\Lambda^3   \otimes T^{ r-1})\uparrow _{\sym_{r-1} }^{\sym_r}
 \rightarrow 
(\Lambda^2   \otimes \Lambda^2  \otimes  T^{ r-2})\uparrow _{\sym_2 \times \sym_{r-2} }^{\sym_r}.
\] 
\end{nota}

The underlying functors of the domain and codomain of $\dbar_r$ identify explicitly as follows:

\begin{lem}
\label{lem:domain_codomain_dbar}
There are isomorphisms of functors 
\begin{eqnarray*}
(\Lambda^3  \otimes T^{r-1})\uparrow _{\sym_{r-1} }^{\sym_r}
&\cong &
\bigoplus_{i=1}^r 
T^{i-1} \otimes \Lambda^3 \otimes T^{r-i}
\\
(\Lambda^2   \otimes \Lambda^2 \otimes  T ^{ r-2})\uparrow _{\sym_2 \times \sym_{r-2} }^{\sym_r}
& \cong &
\bigoplus 
_{1 \leq i< j \leq r} 
T^{i-1} \otimes \Lambda^2 \otimes T^{j-i-1} \otimes \Lambda^2 \otimes T^{r-j}.
\end{eqnarray*}
Thus, $\dbar_r$ identifies as a natural transformation
\[
\bigoplus_{i=1}^r 
T^{i-1} \otimes \Lambda^3 \otimes T^{r-i}
\rightarrow 
\bigoplus 
_{1 \leq i< j \leq r} 
T^{i-1} \otimes \Lambda^2 \otimes T^{j-i-1} \otimes \Lambda^2 \otimes T^{r-j}.
\]
\end{lem}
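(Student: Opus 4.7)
The plan is to prove both isomorphisms by the standard coset-decomposition description of induced representations, applied pointwise in $V$, and then to read off the description of $\dbar_r$ as a direct consequence.

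For the first isomorphism, recall that $(\Lambda^3 \otimes T^{r-1})\uparrow_{\sym_{r-1}}^{\sym_r}$ is by definition $\rat\sym_r \otimes_{\rat\sym_{r-1}} (\Lambda^3 \otimes T^{r-1})$, where $\sym_{r-1}$ acts by place permutation on the last $r-1$ tensor factors and fixes $\Lambda^3$. Since $\sym_{r-1} = \mathrm{Stab}_{\sym_r}(1)$ is the stabilizer of the first letter, a system of coset representatives is given by cycles $\tau_i = (1\; 2\; \cdots\; i)$ for $i=1,\ldots,r$, which place the "distinguished" first slot into position $i$ while preserving the relative order of the remaining slots. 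Applied to $\Lambda^3(V) \otimes V^{\otimes r-1}$, the $i$th coset produces the summand $T^{i-1}(V) \otimes \Lambda^3(V) \otimes T^{r-i}(V)$, yielding the direct sum decomposition claimed.

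For the second isomorphism, the argument is formally identical: $\sym_2 \times \sym_{r-2}$ is the stabilizer of the ordered partition $(\{1,2\},\{3,\ldots,r\})$ inside $\sym_r$, so the cosets $\sym_r / (\sym_2 \times \sym_{r-2})$ are in bijection with the unordered $2$-subsets of $\{1,\ldots,r\}$, equivalently with pairs $1 \leq i < j \leq r$. Choosing a coset representative that sends the first two slots to positions $i$ and $j$ (in that order) while preserving the relative order of the remaining $r-2$ slots, and noting that the ambiguity in this choice is absorbed by the $\sym_2$-invariance already present in $\Lambda^2 \otimes \Lambda^2$, one obtains the summand $T^{i-1} \otimes \Lambda^2 \otimes T^{j-i-1} \otimes \Lambda^2 \otimes T^{r-j}$ for each such pair $(i,j)$. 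The final statement about $\dbar_r$ is then simply the composition of the identifications for the domain and codomain of $\delta''_r$, restricted to $\ker(\delta'_r)$.

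There is essentially no obstacle here beyond careful bookkeeping; the only subtle point is to verify that the chosen coset representatives really do produce the functors $T^{i-1} \otimes \Lambda^3 \otimes T^{r-i}$ (respectively the pair-indexed analogues) without parasitic sign or permutation twists, which is ensured by choosing representatives that preserve the relative order of the non-distinguished slots and by exploiting the built-in $\sym_2$- (resp.\ $\sym_{r-1}$-, $\sym_{r-2}$-) invariance of the functors $\Lambda^2$, $\Lambda^3$ and $T^{r-1}$, $T^{r-2}$.
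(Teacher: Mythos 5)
The paper states this lemma without proof, treating it as a standard bookkeeping consequence of the coset decomposition of induced modules (it is exactly the same identification the paper has already used informally for $\delta'_r$ and $\delta''_r$ a few lines earlier). Your proposal supplies precisely that standard argument — coset representatives for $\sym_{r-1}\subset\sym_r$ and $\sym_2\times\sym_{r-2}\subset\sym_r$, with the observation that the built-in invariance of $\Lambda^3$, $\Lambda^2\otimes\Lambda^2$ and the $T^\bullet$ factors removes any ambiguity in the choice of representative — so it is correct and matches the paper's intended justification.
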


Putting the above together provides the following reduction to the study of $\ker \dbar_r$.

\begin{prop}
\label{prop:H_1_by_dbar}
For $r \geq 2$, $H_1 (\lie (V); \lie(V)^{\otimes r})^{[r+2]}$ is naturally isomorphic to $\ker \dbar_r (V)$.
\end{prop}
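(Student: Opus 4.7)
The plan is to assemble the pieces already in place rather than do anything new. By the description of the complex computing $H_*(\lie(V);\lie(V)^{\otimes r})$, we have $H_1(\lie(V);\lie(V)^{\otimes r})^{[r+2]}=\ker(\adbar_r^{[r+2]})$ as a $\sym_r$-equivariant natural subfunctor of $(V\otimes\lie(V)^{\otimes r})^{[r+2]}$. Since $r\geq 2$, the two displayed isomorphisms in the lemma preceding Notation \ref{nota:dbar} give a canonical direct sum decomposition of the codomain of $\adbar_r^{[r+2]}$, namely $(\schur_{(2,1)}\otimes T^{r-1})\uparrow_{\sym_{r-1}}^{\sym_r}\oplus(\Lambda^2\otimes\Lambda^2\otimes T^{r-2})\uparrow_{\sym_2\times\sym_{r-2}}^{\sym_r}$.

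The first step is to observe that $\adbar_r^{[r+2]}$ identifies under this decomposition with the pair $(\delta'_r,\delta''_r)$: this is exactly the content of the explicit description of $\delta'_r$ and $\delta''_r$ given in Section \ref{subsect:reduce}, since the components of $\adbar_r^{[r+2]}$ landing in the ``one factor of degree $3$'' summand use $\adbar_1 \colon V\otimes\Lambda^2\twoheadrightarrow\schur_{(2,1)}$ (yielding $\delta'_r$), and those landing in the ``two factors of degree $2$'' summand use the $V\otimes V\twoheadrightarrow\Lambda^2$ projection on a pair of tensor factors (yielding $\delta''_r$). Consequently,
\[
\ker(\adbar_r^{[r+2]})=\ker(\delta'_r)\cap\ker(\delta''_r).
\]

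The second step is simply to recall that, by Notation \ref{nota:dbar}, the map $\dbar_r$ is defined as the restriction of $\delta''_r$ to $\ker(\delta'_r)$; hence
\[
\ker(\dbar_r)=\ker(\delta''_r|_{\ker\delta'_r})=\ker(\delta'_r)\cap\ker(\delta''_r).
\]
Combining these two identifications gives the natural $\sym_r$-equivariant isomorphism $H_1(\lie(V);\lie(V)^{\otimes r})^{[r+2]}\cong\ker(\dbar_r)$.

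There is no real obstacle here: everything follows formally from the decomposition of the codomain and the definitions of $\delta'_r$, $\delta''_r$ and $\dbar_r$. The only minor point worth checking is the $\sym_r$-equivariance of the decomposition of the codomain (so that the splitting $\adbar_r^{[r+2]}=(\delta'_r,\delta''_r)$ is itself $\sym_r$-equivariant), but this is immediate because the two summands correspond to the two distinct partitions of the ``extra'' degree $2$ among the $r$ tensor factors, and $\sym_r$ permutes these placements within each summand.
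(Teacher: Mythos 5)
Your proposal is correct and matches the paper's approach exactly; the paper simply states the proposition as ``putting the above together'' without spelling out the intersection-of-kernels argument, which is the content you supply. The only thing to note is that the identification $\ker\dbar_r = \ker\delta'_r \cap \ker\delta''_r$ implicitly also uses Lemma~\ref{lem:ker_delta'} to see $\ker\delta'_r$ concretely as the domain of $\dbar_r$ in Notation~\ref{nota:dbar}, but that is already baked into the definition of $\dbar_r$ as you say.
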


%%%%%%%%%%%%%%%%%%%%%%%%%%%%%%%%%%%%%%%%%%%%%%%%%%%%%%%%%%%%%%%%%%%%%%%%%
\subsection{A geometric visualization}
\label{subsect:geometric_vis}

Henceforth, we use without  further comment the identifications $\Lambda^1 (V) = V = T^1 (V)$, so that, for $\ell \in \nat$,  $T^\ell$ is naturally isomorphic to $(\Lambda^1)^{\otimes \ell}$, understood to be $\rat$ for $\ell=0$.

It is useful to visualize the maps in $\dbar_r$ using the $1$-skeleton of an $r-1$-simplex, considered as a directed bipartite graph as follows. The term $T^{i-1} \otimes \Lambda^3 \otimes T^{r-i}$ labels the black vertex of the $(r-1)$-simplex  labelled by $i \in \{ 1, \ldots r\}$ (using this indexing set for compatibility with that used elsewhere). For each edge of the $1$-skeleton, one adds a white vertex; on the edge between the black vertices  $i< j$ this is labelled by the term $T^{i-1}\otimes \Lambda^2 \otimes T^{ j-i-1} \otimes \Lambda^2 \otimes T^{ r-j}$. This white vertex receives directed edges as indicated in the example below for $r=3$:

\begin{tikzpicture}[scale = 1]
 \draw [-latex] (0,0) -- (.9,0);
 \draw [latex-] (1.1,0) -- (2,0);
  \draw [-latex](0,0)--(.45,.8);
  \draw [latex-] (.55,.9) -- (1, 1.7);
  \draw [-latex](1, 1.7)--(1.45,.9);
  \draw [latex-] (1.55,.8) -- (2,0);
    \draw [fill=white] (1.5,.85) circle [radius = .1] node [right]{$\scriptstyle{\Lambda^1\otimes\Lambda^2  \otimes  \Lambda^2}$} ; 
    \draw [fill=black] (0,0) circle [radius = .1] node [left] {$\scriptstyle{\Lambda^3 \otimes \Lambda^1 \otimes \Lambda^1}$};
  \draw [fill=white] (1,0) circle [radius = .1] node [below] {$\scriptstyle{\Lambda^2 \otimes \Lambda^2 \otimes \Lambda^1}$};  
  \draw [fill=black] (2,0) circle [radius = .1] node [right] {$\scriptstyle{\Lambda^1 \otimes \Lambda^3 \otimes \Lambda^1}$.};
  \draw [fill=black] (1,1.7) circle [radius = .1] node [above] {$\scriptstyle{\Lambda^1  \otimes \Lambda^1\otimes \Lambda^3}$};
   \draw [fill=white] (.5,.85) circle [radius = .1] node [left] {$\scriptstyle{\Lambda^2  \otimes \Lambda^1\otimes \Lambda^2}$};
\end{tikzpicture}

This allows visualization of  `restriction' corresponding to faces. For instance, if one forgets all vertices that have either $\Lambda^2 $ or $\Lambda^3 $ in the last factor, one is left with:

\begin{tikzpicture}[scale = 1]
 \draw [-latex] (0,0) -- (.9,0);
 \draw [latex-] (1.1,0) -- (2,0);
    \draw [fill=black] (0,0) circle [radius = .1] node [left] {$\scriptstyle{\Lambda^3 \otimes \Lambda^1 \otimes \Lambda^1}$};
  \draw [fill=white] (1,0) circle [radius = .1] node [below] {$\scriptstyle{\Lambda^2 \otimes \Lambda^2 \otimes \Lambda^1}$};  
  \draw [fill=black] (2,0) circle [radius = .1] node [right] {$\scriptstyle{\Lambda^1 \otimes \Lambda^3 \otimes \Lambda^1}$.};
 \end{tikzpicture}
 
 \noindent
This is obtained from the corresponding diagram for  $r=2$ by applying $-\otimes \Lambda^1$.

\section{Analysing $\dbar_r$}
\label{sect:kappa}

This section first explains how to analyse the kernel of $\dbar_r$ (see Proposition \ref{prop:X_i_ker_dbar}). This is based on the morphism $\alpha : \Lambda^3 \otimes \Lambda^1 \rightarrow \Lambda^1 \otimes \Lambda^3$ that is introduced in Section \ref{subsect:alpha} (for the case $r=2$), as well as its analogues $\alpha_{i;j}$ (for $r \geq 2$ and $1 \leq i<j \leq r$) introduced in Section \ref{subsect:extend_alpha}.

This is applied in Theorem \ref{thm:r=2} to establish the case $r=2$ of Theorem \ref{THM}. It is also used in Proposition \ref{prop:lower_bound_ker} to establish a lower bound for $\ker \dbar_r$ that will subsequently be shown to be an equality.

%%%%%%%%%%%%%%%%%%%%%%%%%%%%%%%%%%%%%%%%%%%%%%%%%%%%%%%%%
\subsection{The functors $\Lambda^3 \otimes \Lambda^1$ and $\Lambda^2 \otimes \Lambda^2$}

We start by reviewing the decomposition of $\Lambda^3 \otimes \Lambda^1$ and $\Lambda^2 \otimes \Lambda^2$ into simple functors, paying attention to the $\sym_2$-action in the second case.

\begin{lem}
\label{lem:decompose}
There is a canonical splitting
$
\Lambda^3 \otimes \Lambda^1 \cong  \Lambda^4  \oplus \schur_{(2,1^2)} 
$ 
and  a canonical  $\sym_2$-equivariant splitting
\[
\Lambda^2 \otimes \Lambda^2 
\cong 
\big( \Lambda^4   \oplus \schur_{(2,2)}\big) \boxtimes \triv_2 
\ 
\oplus 
\ 
\schur_{(2,1^2)}  \boxtimes \sgn_2,
\]
where $\sym_2$ acts by place permutations on the left hand side.
\end{lem}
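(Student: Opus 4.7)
My plan is to treat the two splittings in turn. For each, the decomposition into simple Schur functors is forced by the Pieri rule and by Proposition \ref{prop:nat_schur}; what requires attention is producing the canonical splitting, and in the second case tracking the $\sym_2$-isotype of each summand.

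For the first splitting, I would exploit the exponential structure of $\Lambda^*$. The coproduct $\Delta \colon \Lambda^4 \to \Lambda^3 \otimes \Lambda^1$ and the product $\mu \colon \Lambda^3 \otimes \Lambda^1 \to \Lambda^4$ satisfy $\mu \circ \Delta = 4 \cdot \mathrm{id}_{\Lambda^4}$, so over $\rat$ one obtains a canonical splitting
\[
\Lambda^3 \otimes \Lambda^1 \cong \Lambda^4 \oplus \ker(\mu).
\]
By Pieri, the only simple constituents of $\Lambda^3 \otimes \Lambda^1$ are $\Lambda^4$ and $\schur_{(2,1^2)}$, each with multiplicity one; combined with Proposition \ref{prop:nat_schur} this forces $\ker(\mu) \cong \schur_{(2,1^2)}$.

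For the second splitting, I would first use that the $\sym_2$-action on $\Lambda^2 \otimes \Lambda^2$ by swapping tensor factors is semisimple, producing the canonical isotypical splitting
\[
\Lambda^2 \otimes \Lambda^2 \;\cong\; \Gamma^2(\Lambda^2) \boxtimes \triv_2 \;\oplus\; \Lambda^2(\Lambda^2) \boxtimes \sgn_2,
\]
where $\Gamma^2(\Lambda^2)$ and $\Lambda^2(\Lambda^2)$ denote the symmetric and antisymmetric parts. The Pieri rule gives $\Lambda^2 \otimes \Lambda^2 \cong \Lambda^4 \oplus \schur_{(2,2)} \oplus \schur_{(2,1^2)}$ as polynomial functors, each with multiplicity one; it remains to distribute these constituents between the two isotypes. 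The product $\mu \colon \Lambda^2 \otimes \Lambda^2 \to \Lambda^4$ is $\sym_2$-invariant (since swapping contributes the Koszul sign $(-1)^{2 \cdot 2} = 1$), placing $\Lambda^4$ inside $\Gamma^2(\Lambda^2)$; a dimension count over $V = \rat^d$ with $d$ large enough (for instance using the hook-content formula, or checking $d=4$ where $\Lambda^4 \neq 0$) then pins down $\Gamma^2(\Lambda^2) \cong \Lambda^4 \oplus \schur_{(2,2)}$ and $\Lambda^2(\Lambda^2) \cong \schur_{(2,1^2)}$.

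I do not foresee a genuine obstacle: the lemma is essentially bookkeeping in $\sym_4$-representations combined with the semisimplicity of $\sym_2$. The only minor care needed is to justify the $\sym_2$-type of the $\schur_{(2,2)}$ summand, since, unlike $\Lambda^4$ and $\schur_{(2,1^2)}$, it is not the image or kernel of an obvious structural map; this is most cleanly handled either by the dimension comparison above or, alternatively, by invoking the identification of $\Gamma^2(\Lambda^2)$ with the plethysm $s_2[s_{(1^2)}]$ whose Schur expansion is standard.
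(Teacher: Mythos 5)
Your argument is correct and follows essentially the same route as the paper: both use Pieri plus multiplicity-freeness and Schur's lemma to get the underlying decomposition, and both place $\Lambda^4$ in the $\triv_2$-isotype via a structural map of the exterior algebra (you use the Koszul-sign symmetry of the product, the paper the symmetry of the coproduct). The only cosmetic difference is in placing $\schur_{(2,2)}$: the paper evaluates $(\Lambda^2\otimes\Lambda^2)^{\sym_2}$ on $\rat^{\oplus 2}$ and invokes Lemma \ref{lem:conn_schur}, whereas you use a dimension count or the plethysm expansion of $s_2[s_{(1^2)}]$; all of these are equivalent bookkeeping.
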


\begin{proof}
The splitting $ \Lambda^3 \otimes \Lambda^1  \cong \Lambda^4  \oplus  \schur_{(2,1^2)}$ and the underlying splitting $\Lambda^2 \otimes \Lambda^2 \cong \Lambda^4   \oplus \schur_{(2,2)} \oplus \schur_{(2,1^2)} $ follow from Pieri's rule; they are canonical since the functors are multiplicity-free. It remains to consider how the $\sym_2$-action on $\Lambda^2  \otimes \Lambda^2 $ behaves with respect to the latter. By Schur's lemma (Proposition \ref{prop:nat_schur} for Schur functors), this action corresponds to an invertible element of 
\begin{eqnarray}
\label{eqn:End_1^4_2,2}
\mathrm{End} (\Lambda^4   \oplus \schur_{(2,2)} \oplus \schur_{(2,1^2)}) 
\cong 
\rat_{(1^4)} \prod \rat_{(2,2)} \prod \rat_{(2,1^2)},
\end{eqnarray}
where the factors of the product of copies of $\rat$ are indexed by the partitions to which they correspond. More precisely, the element is of the form $(u , v, w) \in \rat\prod \rat \prod \rat$, where $u, v, w \in \{1, -1 \}$.

There is an inclusion of $\Lambda^4$ into $\Lambda^2\otimes \Lambda^2$ given by the coproduct. The image is clearly invariant under transposition of tensor factors hence induces $\Lambda^4 \hookrightarrow  (\Lambda^2 \otimes \Lambda^2)^{\sym_2}$.  

Since $(\Lambda^2 \otimes \Lambda^2 )^{\sym_2}$ evaluated on  $\rat^{\oplus 2}$ is clearly non-zero, it must also contain $\schur_{(2,2)}$ as a composition factor, since $\schur_{(2,1^2)}$ and $\Lambda^4$ are both zero when evaluated on $\rat^{\oplus 2}$, by Lemma \ref{lem:conn_schur}.

To conclude, it suffices to observe that $\Lambda^2  \otimes \Lambda^2$ is not invariant under the $\sym_2$-action transposing factors. This implies that the remaining factor must correspond to $\schur_{(2,1^2)}  \boxtimes \sgn_2$. 
\end{proof}

The direct sum decomposition of $\Lambda^3 \otimes \Lambda^1$ can be made explicit as follows. 

\begin{lem}
\label{lem:idempotent_e}
\ 
\begin{enumerate}
\item 
The idempotent corresponding to the projection of $\Lambda^3 \otimes \Lambda^1  $ onto the factor $\Lambda^4  $ is 
\[
e:= 
\frac{1}{4} \Big( 
\Lambda^3 \otimes \Lambda^1 
\stackrel{\mu} {\rightarrow} 
\Lambda^4 
\stackrel{\psi}{\rightarrow}
\Lambda^3  \otimes \Lambda^1 
\Big),
\]
where $\mu$ is the product and $\psi$ the coproduct.
 Explicitly, $e (x\wedge y \wedge z \otimes w) = \frac{1}{4} \big( x\wedge y \wedge z \otimes w
- x \wedge y \wedge w \otimes z + x \wedge z \wedge w \otimes y - y \wedge z \wedge w \otimes x\big).$
\item 
The idempotent giving the projection onto $\schur_{(2,1^2)} $ is $(\id- e)$. This is given explicitly by $(\id- e) (x\wedge y \wedge z \otimes w) = \frac{1}{4} \big( 3  x\wedge y \wedge z \otimes w
+ x \wedge y \wedge w \otimes z - x \wedge z \wedge w \otimes y  + y \wedge z \wedge w \otimes x\big)$. 
\end{enumerate}
\end{lem}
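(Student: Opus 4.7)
The plan is to verify directly that $e$ is idempotent with image the $\Lambda^4$-summand, then invoke Lemma \ref{lem:decompose} to obtain the statement for $\id - e$. The explicit formulas then follow by unwinding the definitions.

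First I would establish the key identity $\mu \circ \psi = 4 \cdot \id_{\Lambda^4}$. This reflects the fact that $\Lambda^*$ is exponential, so this composite scales by $\binom{4}{1}$. Concretely, using the standard formula for the coproduct, $\psi(x \wedge y \wedge z \wedge w) = (x \wedge y \wedge z)\otimes w - (x \wedge y \wedge w)\otimes z + (x \wedge z \wedge w)\otimes y - (y \wedge z \wedge w)\otimes x$, each of the four terms yields $x \wedge y \wedge z \wedge w$ upon applying $\mu$ (once the signs from reordering the wedge are accounted for), so the sum is $4 (x \wedge y \wedge z \wedge w)$. The factor $1/4$ in the definition of $e$ then makes $e$ idempotent: $e^2 = \tfrac{1}{16}\, \psi (\mu\psi)\mu = \tfrac{1}{16}\, \psi \cdot 4 \cdot \mu = e$.

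Next I would identify the image of $e$. Since $\image e \subseteq \image \psi$ and $e \circ \psi = \psi$ (using $\mu \circ \psi = 4\cdot \id$), the image of $e$ is exactly $\image \psi$. Under the decomposition of Lemma \ref{lem:decompose}, $\image \psi$ corresponds to the $\Lambda^4$-summand of $\Lambda^3 \otimes \Lambda^1$. Hence $e$ is the projector onto $\Lambda^4$, and because there are only two simple summands, the complementary idempotent $\id - e$ projects onto $\schur_{(2,1^2)}$.

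Finally, the explicit formula for $e(x \wedge y \wedge z \otimes w)$ is obtained by applying $\psi$ to $\mu(x \wedge y \wedge z \otimes w) = x \wedge y \wedge z \wedge w$ and dividing by $4$; the formula for $(\id - e)(x \wedge y \wedge z \otimes w)$ follows by subtraction. The only potential obstacle is sign bookkeeping, namely ensuring the conventions on product and coproduct are consistent so that one indeed obtains the scalar $4$ in $\mu \circ \psi$; this is routine combinatorics and presents no real difficulty.
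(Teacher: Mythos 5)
Your proposal is correct and follows essentially the same route as the paper: verify that $e$ is idempotent by showing $\mu \circ \psi = 4\cdot\id_{\Lambda^4}$, observe that $e$ factors through $\Lambda^4$ so its image is the $\Lambda^4$-summand, and deduce the complementary projection onto $\schur_{(2,1^2)}$. The paper states this more tersely (``straightforward verification that $e$ is an idempotent; since it factors across $\Lambda^4$ it gives the required projection''); you have simply supplied the computational details.
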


\begin{proof}
It is a  straightforward verification that $e$ is an idempotent; since it factors across $\Lambda^4$ it gives the required projection. The second statement follows immediately.
\end{proof}

We record the following application of Schur's lemma to endomorphisms of $\Lambda^3 \otimes \Lambda^1$ analogous to (\ref{eqn:End_1^4_2,2}) above:

\begin{lem}
\label{lem:schur_Lambda3_otimes_Lambda1}
The endomorphism ring of $\Lambda^3 \otimes \Lambda^1  \cong \Lambda^4 \oplus \schur_{(2,1^2)}$ is canonically isomorphic to the product of two copies of $\rat$:
\begin{eqnarray}
\label{eqn:End_3otimes1}
\mathrm{End} (\Lambda^3 \otimes \Lambda^1) \cong 
\mathrm{End}(\Lambda^4) \prod \mathrm{End} (\schur_{(2,1^2)}) = \rat_{(1^4)} \prod \rat_{(2,1^2)}.
\end{eqnarray}
\end{lem}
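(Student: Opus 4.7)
The plan is to deduce this lemma as a direct application of the Schur's-lemma statement already recorded as Proposition \ref{prop:nat_schur}, combined with the canonical splitting from Lemma \ref{lem:decompose}.

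First I would invoke Lemma \ref{lem:decompose}, which provides the canonical isomorphism $\Lambda^3 \otimes \Lambda^1 \cong \Lambda^4 \oplus \schur_{(2,1^2)}$; this splitting is intrinsic because the two simple Schur functors appearing are non-isomorphic and the decomposition is multiplicity-free. Next I would expand the endomorphism ring across the direct sum:
\[
\mathrm{End}(\Lambda^3 \otimes \Lambda^1) \cong \mathrm{End}(\Lambda^4) \oplus \mathrm{Nat}(\Lambda^4, \schur_{(2,1^2)}) \oplus \mathrm{Nat}(\schur_{(2,1^2)}, \Lambda^4) \oplus \mathrm{End}(\schur_{(2,1^2)}).
\]
Since $\Lambda^4 = \schur_{(1^4)}$ and $(1^4) \neq (2,1^2)$, Proposition \ref{prop:nat_schur}(2) forces both mixed Nat-spaces to vanish and identifies each diagonal summand with a single copy of $\rat$. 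The projection onto each factor is canonical (given by composing with the idempotent projector onto the corresponding summand, as made explicit by $e$ and $\id - e$ in Lemma \ref{lem:idempotent_e}), so the resulting isomorphism is the canonical one displayed in (\ref{eqn:End_3otimes1}), with the two factors of $\rat$ labelled unambiguously by the partitions $(1^4)$ and $(2,1^2)$.

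There is essentially no obstacle here: the lemma is purely a bookkeeping statement recording Schur's lemma in the specific form that will be used in subsequent arguments. The only point that deserves a line of comment is that the product (rather than a noncommutative algebra) structure results from the fact that the two isotypical summands are distinct simple functors, so all off-diagonal morphisms vanish.
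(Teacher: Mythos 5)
Your argument is exactly the one the paper intends: the lemma is stated as an "application of Schur's lemma" with no separate proof, relying on the splitting from Lemma \ref{lem:decompose} together with Proposition \ref{prop:nat_schur} to kill the off-diagonal Nat-spaces and identify the diagonal blocks. Your expansion of $\mathrm{End}$ across the direct sum and your remarks on canonicity match the paper's implicit reasoning; nothing is missing.
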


%%%%%%%%%%%%%%%%%%%%%%%%%%%%%%%%%%%%%%%%%%%%%%%%%%%%%%%%%%%%%%%%%%%%%%%%%%%%%%%%%%%%%%%%%%%%%
\subsection{The morphisms $\kappa$ and $\tilde{\kappa}$}

We introduce the morphisms $\kappa$ and  $\tilde{\kappa}$ that correspond to the components of $\dbar_2$:

\begin{nota}
\label{nota:kappa}
Let $\kappa : \Lambda^3  \otimes \Lambda^1  \rightarrow \Lambda ^2  \otimes \Lambda^2  $ denote the composite of the coproduct on the first factor followed by the product on the last factors:
\begin{eqnarray}
\Lambda^3  \otimes \Lambda^1 
\stackrel{\psi \otimes \id}{\rightarrow} 
\Lambda^2  \otimes \Lambda^1   \otimes \Lambda^1 
\stackrel{\id \otimes \mu}{\rightarrow}
 \Lambda ^2  \otimes \Lambda^2 .
\end{eqnarray}

The morphism $\tilde{\kappa} : \Lambda^1 \otimes \Lambda^3  \rightarrow \Lambda^2 \otimes \Lambda^2 $ is defined by the commutative diagram:
\[
\xymatrix{
\Lambda^1 \otimes \Lambda^3 \ar[r]^{\tilde{\kappa}}
\ar[d]_\tau
&
\Lambda^2  \otimes \Lambda^2 
\ar[d]^\tau 
\\
\Lambda^3  \otimes \Lambda^1  \ar[r]^{\kappa}
&
\Lambda^2  \otimes \Lambda^2  ,
}
\]
in which the maps labelled $\tau$ exchange the tensor factors.
\end{nota}

The significance of $\kappa $ is shown by the following Lemma, which is an addendum to Lemma \ref{lem:domain_codomain_dbar}:

\begin{lem}
\label{lem:dbar}
For $r\geq 2$, the natural transformation $\dbar_r$ is the unique $\sym_r$-equivariant natural transformation with component 
\[
\Lambda^3  \otimes T^{r-1} 
\rightarrow 
\Lambda^2 \otimes \Lambda^2 \otimes T^{ r-2}
\]
given by $\kappa \otimes \id_{T^{r-2}}$.
\end{lem}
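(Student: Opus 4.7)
The plan is to verify two facts: first, that the prescribed component of $\dbar_r$ is $\kappa \otimes \id_{T^{r-2}}$; second, that $\sym_r$-equivariance together with the structural properties of $\delta_r''$ determines $\dbar_r$ uniquely from this single piece of data. The first is a direct computation unwinding the definitions of $\ker \delta_r'$, $\delta_r''$, and $\kappa$; the second is a Frobenius reciprocity argument supplemented by a structural observation about where $\delta_r''$ places its two output $\Lambda^2$ factors.

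For the computation, by Lemma \ref{lem:ker_delta'} the inclusion of the $i=1$ summand $\Lambda^3 \otimes T^{r-1}$ of $\ker \delta_r'$ into the corresponding summand $V \otimes \Lambda^2 \otimes T^{r-1}$ of the domain of $\delta_r''$ is obtained from the coproduct $\Lambda^3 \to V \otimes \Lambda^2$ applied to the first factor. The explicit $(i,j,k) = (1,1,2)$ component of $\delta_r''$ described before Notation \ref{nota:dbar} applies $\adbar_1$, namely the product $V \otimes V \to \Lambda^2$, to the first and third tensor factors of $V \otimes \Lambda^2 \otimes T^{r-1}$. Composing, one obtains on the first three tensor factors the sequence $\Lambda^3 \otimes \Lambda^1 \to \Lambda^2 \otimes \Lambda^1 \otimes \Lambda^1 \to \Lambda^2 \otimes \Lambda^2$ — precisely $\kappa$ as defined in Notation \ref{nota:kappa} — while the remaining $T^{r-2}$ factors are untouched, yielding $\kappa \otimes \id_{T^{r-2}}$.

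For uniqueness, $\sym_r$-equivariance together with Frobenius reciprocity reduces the data of $\dbar_r$ to that of a $\sym_{r-1}$-equivariant natural transformation from $\Lambda^3 \otimes T^{r-1}$ into the codomain restricted to $\sym_{r-1}$. This restriction decomposes along the two $\sym_{r-1}$-orbits of $2$-subsets of $\{1,\ldots,r\}$: those containing position $1$ and those not. The first orbit's contribution is itself induced (as a $\sym_{r-1}$-representation) from $\Lambda^2 \otimes \Lambda^2 \otimes T^{r-2}$, so a further application of Frobenius reciprocity shows that the $\sym_{r-2}$-equivariant map $\kappa \otimes \id_{T^{r-2}}$ determines the entire contribution to this orbit. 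The second orbit contributes trivially: by the explicit formula for $\delta_r''$, its two output $\Lambda^2$ factors always occupy the position of the original domain $\Lambda^2$ and the position of the $V$ being bracketed with the outside $V$, so restricting to the $i=1$ summand of $\ker\delta_r'$ forces one of the output $\Lambda^2$'s to sit at position $1$.

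The main bookkeeping obstacle is matching tensor-factor orderings — the coproduct convention $\Lambda^3 \to V \otimes \Lambda^2$ of Lemma \ref{lem:ker_delta'} versus the $\Lambda^3 \to \Lambda^2 \otimes V$ form implicit in $\kappa$, the ``first and third factor'' choice for $\delta_r''$, and the ordering of the two $\Lambda^2$ factors imposed by the isomorphism of Lemma \ref{lem:domain_codomain_dbar} — carefully enough that the composite is identified with $\kappa \otimes \id_{T^{r-2}}$ on the nose rather than merely up to an internal swap of the two $\Lambda^2$ factors.
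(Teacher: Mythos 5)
Your proposal is essentially correct and matches the paper's implicit reasoning; the paper in fact offers no explicit proof of this lemma, presenting it as a direct analogue of the description of $\delta_r''$ already given in Section \ref{subsect:reduce}. Your computation of the $(1,\{1,2\})$ component as $\kappa\otimes\id_{T^{r-2}}$ by composing the inclusion $\Lambda^3\hookrightarrow V\otimes\Lambda^2$ of Lemma \ref{lem:ker_delta'} with the explicit $\adbar_1$-component of $\delta_r''$ is precisely the intended verification, and your Mackey/Frobenius analysis for the equivariance is a clean formalisation of what the paper leaves tacit.

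The one place to be slightly more careful is the ``second orbit.'' What you actually prove is that \emph{$\dbar_r$ itself} has zero component from the $i=1$ summand to any $(j,k)$-summand with $1\notin\{j,k\}$ — a structural fact about $\delta_r''$ and the inclusion $\ker\delta_r'\hookrightarrow V\otimes\lie^{\otimes r}$, and a correct and useful observation. This establishes \emph{existence}: $\dbar_r$ is a $\sym_r$-equivariant map whose $(1,\{1,2\})$-entry is $\kappa\otimes\id$. It does not by itself establish \emph{uniqueness} in the literal sense, since one would additionally need that there are no nonzero $\sym_{r-1}$-equivariant natural transformations $\Lambda^3\otimes T^{r-1}\to\Lambda^1\otimes(\Lambda^2\otimes\Lambda^2\otimes T^{r-3})\uparrow_{\sym_2\times\sym_{r-3}}^{\sym_{r-1}}$ (the second Mackey piece). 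In fact such maps do exist already for $r=3$ (both sides contain $\schur_{(2,1^3)}$ in their $\triv_2$-isotypic parts), so the lemma's ``unique'' should be read as the weaker assertion that $\dbar_r$ is recovered from its $(1,\{1,2\})$-entry by $\sym_r$-equivariance \emph{once one also knows} that the entries with $i\notin\{j,k\}$ vanish — exactly the supplementary fact you supply. Your flag of the tensor-ordering bookkeeping ($\Lambda^3\to V\otimes\Lambda^2$ vs.\ $\Lambda^3\to\Lambda^2\otimes V$, etc.) is legitimate but harmless: any resulting sign or internal swap of the two $\Lambda^2$'s is absorbed into the normalisation of $\kappa$ and of the $\alpha_{i;j}$ in the next section, so it does not affect the downstream use in Proposition \ref{prop:X_i_ker_dbar}.
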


\begin{rem}
There is a slight subtlety here: $\kappa$ is defined using the coproduct $\Lambda^3 \rightarrow \Lambda^2 \otimes \Lambda^1$, whereas $\dbar_2$ is defined using the coproduct $\Lambda^3 \rightarrow \Lambda^1 \otimes \Lambda^2$. This makes no difference, since the `graded commutativity' of the exterior algebra in this case ensures that the two maps identify via the transposition of the tensor factors of $\Lambda^1 \otimes \Lambda^2$.
\end{rem}

The definition of $\tilde\kappa$ is justified by the following:

\begin{lem}
\label{lem:r=2}
The natural transformation underlying 
$\dbar_2$ identifies as 
\[
(\kappa,\tilde{\kappa}) 
\ : \ 
\Lambda^3  \otimes \Lambda^1 
\ \oplus \ 
\Lambda^1  \otimes \Lambda^3 
\rightarrow 
\Lambda^2 \otimes \Lambda^2  .
\]
\end{lem}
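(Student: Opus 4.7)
The plan is to unpack the construction of $\dbar_2$ step by step, using Lemma \ref{lem:ker_delta'} and Lemma \ref{lem:domain_codomain_dbar} to identify its domain, codomain, and individual components, and then match each component against $\kappa$ or $\tilde{\kappa}$.

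First I would specialise Lemma \ref{lem:domain_codomain_dbar} to $r=2$. The domain decomposes as $\Lambda^3 \otimes \Lambda^1 \oplus \Lambda^1 \otimes \Lambda^3$, indexed by $i \in \{1,2\}$, while the codomain reduces to the single summand $\Lambda^2 \otimes \Lambda^2$, since $1 \leq i < j \leq 2$ forces $(i,j)=(1,2)$. Hence $\dbar_2$ is determined by its two components, one on each summand of the domain, and it suffices to identify each of them.

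For the first component, Lemma \ref{lem:ker_delta'} identifies the inclusion $\Lambda^3 \otimes \Lambda^1 \hookrightarrow V \otimes \Lambda^2 V \otimes V$ (into the $i=1$ summand of the domain of $\delta''_2$) as being induced by the coproduct $\Lambda^3 \to \Lambda^1 \otimes \Lambda^2$ on the first factor. By the explicit description of the $(j=1, k=2)$-component of $\delta''_2$ given in Section \ref{subsect:reduce}, the resulting map $V \otimes \Lambda^2 V \otimes V \to \Lambda^2 V \otimes \Lambda^2 V$ applies $\adbar_1$ to the first and third tensor factors, sending $v \otimes \lambda \otimes w$ to $\lambda \otimes (v \wedge w)$. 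A direct calculation on a generator $(x \wedge y \wedge z) \otimes w$, expanding the coproduct and then taking the wedge of the outer factors, gives the same output as $\kappa$, once one observes that the two coproducts $\Lambda^3 \to \Lambda^1 \otimes \Lambda^2$ and $\Lambda^3 \to \Lambda^2 \otimes \Lambda^1$ differ only by the swap $\tau$ of the outer factors. Equivalently, the composite factors as the coproduct on the first factor followed by the product on the remaining two, which is exactly the definition of $\kappa$ in Notation \ref{nota:kappa}.

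For the second component $\Lambda^1 \otimes \Lambda^3 \to \Lambda^2 \otimes \Lambda^2$, I would invoke $\sym_2$-equivariance. The map $\dbar_2$ is $\sym_2$-equivariant because it is the restriction of the $\sym_2$-equivariant $\adbar_2^{[4]}$ to the $\sym_2$-stable kernel of $\delta'_2$. The $\sym_2$-action permutes the two summands of the domain by exchanging tensor factors and acts on the codomain $\Lambda^2 \otimes \Lambda^2$ by the swap $\tau$. It follows that the second component equals $\tau \circ \kappa \circ \tau$, which is precisely $\tilde{\kappa}$ by its defining diagram in Notation \ref{nota:kappa}. The only obstacle is purely bookkeeping: tracking the sign conventions of the two coproducts of $\Lambda^3$ and keeping straight the reordering of tensor factors when $\adbar_1$ is applied to the non-adjacent positions $1$ and $3$; no conceptual difficulty is expected.
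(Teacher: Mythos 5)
Your proposal is correct and follows essentially the same strategy as the paper: identify one component of $\dbar_2$ with $\kappa$ and appeal to $\sym_2$-equivariance to pin down the other as $\tilde{\kappa}$. The paper's proof is terser, merely noting that the definition of $\tilde{\kappa}$ makes $(\kappa,\tilde{\kappa})$ $\sym_2$-equivariant and then invoking the characterization of $\dbar_2$ by one component plus equivariance; you additionally spell out the unwinding of $\delta''_2$ restricted to $\ker\delta'_2$, correctly observing that the coproducts $\Lambda^3 \to \Lambda^1 \otimes \Lambda^2$ and $\Lambda^3 \to \Lambda^2 \otimes \Lambda^1$ differ by a sign-free swap (the Koszul sign being $(-1)^{2\cdot 1}=1$), so the composite really is $\kappa$ as defined in Notation \ref{nota:kappa}.
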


\begin{proof}
This follows from the characterization of  $\dbar_2$ given by Lemma \ref{lem:dbar} by showing that the map $(\kappa,\tilde{\kappa}) $  is $\sym_2$-equivariant. This property follows from the construction of $\tilde{\kappa}$ in terms of $\kappa$.
\end{proof}

%%%%%%%%%%%%%%%%%%%%%%%%%%%%%%%%%%%%%%%%%%%%%%%%%%%%%%%%%%%
\subsection{The isomorphism $\alpha$}
\label{subsect:alpha}

Before introducing  the announced isomorphism $\alpha$, we require to establish some properties of $\kappa$ and $\tilde{\kappa}$:

\begin{lem}
\label{lem:injective}
The natural transformations 
\begin{eqnarray*}
\kappa &:& \Lambda^3 \otimes \Lambda^1  \rightarrow \Lambda ^2  \otimes \Lambda^2 
\\
\tilde{\kappa} &:& \Lambda^1 \otimes \Lambda^3  \rightarrow \Lambda ^2  \otimes \Lambda^2 
\end{eqnarray*}
are injective. Moreover 
$
\mathrm{image} (\kappa) = \mathrm{image} (\tilde{\kappa}) 
$.
\end{lem}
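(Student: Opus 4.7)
The plan is to reduce everything to Schur's lemma via the Pieri decompositions. Using Lemma \ref{lem:decompose}, $\Lambda^3 \otimes \Lambda^1 \cong \Lambda^4 \oplus \schur_{(2,1^2)}$ and $\Lambda^2 \otimes \Lambda^2 \cong \Lambda^4 \oplus \schur_{(2,2)} \oplus \schur_{(2,1^2)}$, both multiplicity-free. Proposition \ref{prop:nat_schur} then gives $\mathrm{Nat}(\Lambda^3 \otimes \Lambda^1, \Lambda^2 \otimes \Lambda^2) \cong \rat^2$, so $\kappa$ is determined by a pair of scalars $(a,b)$ recording its components on the two common simple summands $\Lambda^4$ and $\schur_{(2,1^2)}$. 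Thus $\kappa$ is injective if and only if both $a$ and $b$ are nonzero.

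To see $a \neq 0$, I would compose $\kappa$ with the product $\mu : \Lambda^2 \otimes \Lambda^2 \to \Lambda^4$. A direct unwinding using the explicit formula for the coproduct $\psi : \Lambda^3 \to \Lambda^2 \otimes \Lambda^1$ yields $(\mu \circ \kappa)((x \wedge y \wedge z) \otimes w) = 3\,(x \wedge y \wedge z \wedge w)$, so $\mu \circ \kappa$ is three times the standard product $\Lambda^3 \otimes \Lambda^1 \to \Lambda^4$ and is in particular nonzero.

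The main obstacle is isolating the scalar $b$, since explicit projection to $\schur_{(2,1^2)}$ via the idempotent $(\id - e)$ of Lemma \ref{lem:idempotent_e} is cumbersome. The shortcut I would use is to evaluate $\kappa$ on $V = \rat^3$: by Lemma \ref{lem:conn_schur} one has $\Lambda^4(\rat^3) = 0$, so over $\rat^3$ only the $\schur_{(2,1^2)}$-component of $\kappa$ survives. Testing $\kappa$ on $(e_1 \wedge e_2 \wedge e_3) \otimes e_1$ then produces a manifestly nonzero element of $(\Lambda^2 \otimes \Lambda^2)(\rat^3)$, forcing $b \neq 0$. Injectivity of $\tilde\kappa$ is immediate from its defining commutative diagram, since the twists $\tau$ are isomorphisms.

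For the equality of images, the argument is formal: both $\image(\kappa)$ and $\image(\tilde\kappa)$ are subfunctors of $\Lambda^2 \otimes \Lambda^2$ with composition factors $\Lambda^4$ and $\schur_{(2,1^2)}$ (and not $\schur_{(2,2)}$, which is absent from the respective domains). Because $\Lambda^2 \otimes \Lambda^2$ is multiplicity-free, Proposition \ref{prop:nat_schur} implies that a subfunctor is determined by its set of simple constituents, so the two images coincide as subfunctors of $\Lambda^2 \otimes \Lambda^2$.
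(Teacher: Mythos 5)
Your proof is correct and follows essentially the same route as the paper: reduce to checking that the $\Lambda^4$- and $\schur_{(2,1^2)}$-components of $\kappa$ are nonzero via the multiplicity-free Pieri decompositions and Schur's lemma, with the second check done by evaluating on $\rat^{\oplus 3}$. The only cosmetic difference is that you detect the $\Lambda^4$-component by post-composing with the product $\Lambda^2\otimes\Lambda^2\to\Lambda^4$, whereas the paper pre-composes with the coproduct $\Lambda^4\to\Lambda^3\otimes\Lambda^1$; both are equivalent.
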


\begin{proof}
To prove that $\kappa$ is injective, using Lemma \ref{lem:decompose}, it suffices to show that $\kappa|_{\Lambda^4}$ and $\kappa|_{\schur_{(2,1^2)}}$ are both non-trivial, since $\Lambda^4 $ and $\schur_{(2,1^2)}$ are non-isomorphic simple functors (see Proposition \ref{prop:nat_schur}). For $\Lambda^4$, it suffices to check that the composite $\Lambda^4  \stackrel{\psi}{\rightarrow } \Lambda^3  \otimes \Lambda^1 \stackrel{\kappa}{\rightarrow} \Lambda ^2  \otimes \Lambda^2 $ is non-trivial; this is a simple calculation.  For $\schur_{(2,1^2)}$, it suffices to check that $\kappa$ is non-trivial when evaluated on $V = \rat^{\oplus 3}$; this is again straightforward.

That $\tilde{\kappa}$ is injective follows immediately from the definition of $\tilde{\kappa}$ in terms of $\kappa$. By inspection, again using Lemma \ref{lem:decompose}, the images coincide. 
\end{proof}

This gives the commutative diagram:

\begin{eqnarray}
\label{eqn:alpha_diagram}
\xymatrix{
\Lambda^3 \otimes \Lambda^1 
\ar[r]^\kappa 
\ar[rd]_\cong 
&
\Lambda^2  \otimes \Lambda^2
&
\Lambda^1 \otimes \Lambda^3
\ar[l]_{\tilde{\kappa}}
\ar[ld]^\cong 
\\
&
\mathrm{image} (\kappa) =  \mathrm{image} (\tilde{\kappa}) .
\ar@{^(->}[u]
}
\end{eqnarray}

\begin{defn}
\label{defn:alpha}
Let $\alpha$ be the isomorphism $\Lambda^3  \otimes \Lambda^1  \stackrel{\cong}{\rightarrow} 
\Lambda^1  \otimes \Lambda^3$ induced by passage around the bottom of (\ref{eqn:alpha_diagram}) (inverting the right hand isomorphism).
\end{defn}

Thus $\tau \circ \alpha$ is an automorphism of $\Lambda^3 \otimes \Lambda^1$, using the symmetry $\Lambda^1 \otimes \Lambda^3 \stackrel{\tau}{\rightarrow} \Lambda^3 \otimes \Lambda^1$.
By Lemma \ref{lem:schur_Lambda3_otimes_Lambda1}, this identifies as follows:

\begin{prop}
\label{prop:new_tau_alpha}
Under the isomorphism (\ref{eqn:End_3otimes1}), the automorphism $\tau \circ \alpha \in \mathrm{End} (\Lambda^3 \otimes \Lambda^1)$ identifies as $(1, -1)$. In particular,  $\tau \circ \alpha$ is an involution, i.e.,  $(\tau \circ \alpha) \circ (\tau \circ \alpha) = \id$. 
\end{prop}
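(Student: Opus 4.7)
My plan is to apply Lemma~\ref{lem:schur_Lambda3_otimes_Lambda1}, which identifies $\mathrm{End}(\Lambda^3 \otimes \Lambda^1)$ with $\rat \times \rat$ indexed by the simple summands $\Lambda^4$ and $\schur_{(2,1^2)}$, and then determine the two scalars by which $\tau \circ \alpha$ acts. The involution property will then be automatic provided the scalars lie in $\{\pm 1\}$; in fact this can be seen a priori from the defining diagram of $\tilde\kappa$, which yields $\tilde\kappa^{-1} = \tau^{-1} \circ \kappa^{-1} \circ \tau$ on the common image of $\kappa$ and $\tilde\kappa$, whence $\tau \circ \alpha = \tau \circ \tilde\kappa^{-1} \circ \kappa = \kappa^{-1} \circ \tau \circ \kappa$ and $(\tau \circ \alpha)^2 = \kappa^{-1} \circ \tau^2 \circ \kappa = \id$.

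To compute the scalar on the $\Lambda^4$-summand I embed $\Lambda^4$ via the coproducts $\psi_{3,1}: \Lambda^4 \hookrightarrow \Lambda^3 \otimes \Lambda^1$ and $\psi_{1,3}: \Lambda^4 \hookrightarrow \Lambda^1 \otimes \Lambda^3$. By Schur's lemma, $\alpha \circ \psi_{3,1} = c \cdot \psi_{1,3}$ for some $c \in \rat$; applying $\tilde\kappa$ and using $\tilde\kappa \circ \alpha = \kappa$ gives $c \cdot (\tilde\kappa \circ \psi_{1,3}) = \kappa \circ \psi_{3,1}$. A coassociativity argument yields $\kappa \circ \psi_{3,1} = 2 \psi_{2,2}$, and combining $\tilde\kappa = \tau \circ \kappa \circ \tau$ with the graded-commutativity identities $\tau \circ \psi_{1,3} = -\psi_{3,1}$ and $\tau \circ \psi_{2,2} = \psi_{2,2}$ gives $\tilde\kappa \circ \psi_{1,3} = -2\psi_{2,2}$. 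Hence $c = -1$, and $\tau \circ \alpha \circ \psi_{3,1} = -\tau \circ \psi_{1,3} = \psi_{3,1}$, so the scalar on $\Lambda^4$ is $+1$.

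For the scalar on $\schur_{(2,1^2)}$, I evaluate at $V = \rat^{\oplus 3}$ with basis $\{x, y, z\}$. Since $\Lambda^4(\rat^3) = 0$ by Lemma~\ref{lem:conn_schur}, the whole space $(\Lambda^3 \otimes \Lambda^1)(\rat^3)$ is of $\schur_{(2,1^2)}$-type. Direct computation from Notation~\ref{nota:kappa} gives $\kappa(x \wedge y \wedge z \otimes x) = x \wedge z \otimes x \wedge y - x \wedge y \otimes x \wedge z$ and $\tilde\kappa(x \otimes x \wedge y \wedge z) = x \wedge y \otimes x \wedge z - x \wedge z \otimes x \wedge y$, which are negatives of each other. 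Injectivity of $\tilde\kappa$ (Lemma~\ref{lem:injective}) then forces $\alpha(x \wedge y \wedge z \otimes x) = -x \otimes x \wedge y \wedge z$, so $(\tau \circ \alpha)(x \wedge y \wedge z \otimes x) = -x \wedge y \wedge z \otimes x$ and the scalar is $-1$.

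The main potential pitfall is sign bookkeeping, since both non-trivial signs in the final answer $(1, -1)$ ultimately come from the graded-commutativity of the exterior-algebra coproduct, which is easy to mishandle.
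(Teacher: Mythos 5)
Your proof is correct, but it takes a genuinely different route from the paper for determining the two scalars. The paper observes (as you do in your first paragraph) that $\tau \circ \alpha = \kappa^{-1} \circ \tau \circ \kappa$, and then reads off the eigenvalues $(1,-1)$ directly from Lemma~\ref{lem:decompose}: since that lemma computes the $\sym_2$-action on $\Lambda^2 \otimes \Lambda^2$ to be $+1$ on the $\Lambda^4$ and $\schur_{(2,2)}$ summands and $-1$ on $\schur_{(2,1^2)}$, and since $\kappa$ embeds $\Lambda^3 \otimes \Lambda^1 \cong \Lambda^4 \oplus \schur_{(2,1^2)}$ onto the corresponding isotypical pieces (preserved by $\tau$ because $\mathrm{image}(\kappa) = \mathrm{image}(\tilde\kappa) = \tau(\mathrm{image}(\kappa))$), conjugation transports those eigenvalues straight across with no further computation. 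You instead bypass Lemma~\ref{lem:decompose} entirely: you pin down the $\Lambda^4$ eigenvalue by a coassociativity computation ($\kappa \circ \psi_{3,1} = 2\psi_{2,2}$, $\tau\circ\psi_{1,3} = -\psi_{3,1}$, $\tau\circ\psi_{2,2} = \psi_{2,2}$), and you pin down the $\schur_{(2,1^2)}$ eigenvalue by evaluating at $V = \rat^{\oplus 3}$, where $\Lambda^4$ vanishes by Lemma~\ref{lem:conn_schur} so a single element computation suffices. Your element-level computations check out (I verified both). The trade-off is that the paper's argument is shorter and reuses an already-established structural lemma, while yours is self-contained and makes the sign bookkeeping fully explicit; your observation that $(\tau\circ\alpha)^2 = \kappa^{-1}\circ\tau^2\circ\kappa = \id$ gives the involution property a priori is also a nice touch that the paper only obtains a posteriori from the eigenvalues.
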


\begin{proof}
Using the definition of $\tilde{\kappa}$ in terms of $\kappa$, this follows from Lemma \ref{lem:decompose}, which describes the action of $\sym_2$ on $\Lambda^2 \otimes \Lambda^2$ by place permutation in terms of the isotypical decomposition of $\Lambda^2 \otimes \Lambda^2$.
\end{proof}

This allows the following explicit identification of the automorphism $\tau \circ \alpha$, using the idempotent $e \in \mathrm{End} (\Lambda^3\otimes \Lambda^1)$ of Lemma \ref{lem:idempotent_e}:

\begin{cor}
\label{cor:new_tau_alpha}
The automorphism $\tau \circ \alpha$ of $\Lambda^3  \otimes \Lambda^1$ identifies as the morphism $2 e - \id$. Explicitly:
 $$\tau \circ \alpha (x \wedge y \wedge z \otimes w ) = 
-\frac{1}{2} \big( x \wedge y \wedge z  \otimes w + x \wedge y \wedge w \otimes z - x \wedge z \wedge w \otimes y + y \wedge z \wedge w \otimes x \big).$$ 
\end{cor}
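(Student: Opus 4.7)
The plan is to leverage Proposition \ref{prop:new_tau_alpha} together with the fact that the endomorphism ring of $\Lambda^3\otimes\Lambda^1$ is a product of two copies of $\rat$, indexed by the two isotypical components $\Lambda^4$ and $\schur_{(2,1^2)}$ (Lemma \ref{lem:schur_Lambda3_otimes_Lambda1}). Under that identification, Proposition \ref{prop:new_tau_alpha} already says that $\tau\circ\alpha$ corresponds to $(1,-1)$. So the only thing to check to get the identification $\tau\circ\alpha = 2e-\id$ is that $2e-\id$ also corresponds to $(1,-1)$.

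This is immediate from Lemma \ref{lem:idempotent_e}: since $e$ is the idempotent projecting onto the $\Lambda^4$ summand, it acts as $1$ on $\Lambda^4$ and as $0$ on $\schur_{(2,1^2)}$, so $2e-\id$ acts as $2\cdot 1-1 = 1$ on $\Lambda^4$ and as $2\cdot 0-1 = -1$ on $\schur_{(2,1^2)}$, exactly matching $(1,-1)$. By Lemma \ref{lem:schur_Lambda3_otimes_Lambda1} any two natural endomorphisms that agree on both isotypical components must be equal, hence $\tau\circ\alpha = 2e-\id$.

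For the explicit formula, one simply substitutes the closed form for $e(x\wedge y\wedge z\otimes w)$ given in Lemma \ref{lem:idempotent_e} into $2e-\id$ and collects terms; the $\frac14$ in the formula for $e$ becomes $\frac12$ after doubling, and the subtraction of the identity combines with the first term $\frac12\, x\wedge y\wedge z\otimes w$ to yield $-\frac12\, x\wedge y\wedge z\otimes w$, producing the displayed expression. There is no real obstacle here; the only thing that requires any thought is making sure the sign on $\schur_{(2,1^2)}$ is the one recorded in Proposition \ref{prop:new_tau_alpha}, which itself goes back to the $\sgn_2$-isotypical identification in Lemma \ref{lem:decompose}. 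Everything else is bookkeeping.
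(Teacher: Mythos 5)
Your proposal is correct and follows exactly the argument the paper intends: the paper states the corollary immediately after Proposition \ref{prop:new_tau_alpha} without a written proof, leaving the reader to observe precisely what you spell out — that $e$ corresponds to $(1,0)$ under the isomorphism of Lemma \ref{lem:schur_Lambda3_otimes_Lambda1}, so $2e-\id$ corresponds to $(1,-1)$, matching $\tau\circ\alpha$, and then substitute the explicit formula for $e$ from Lemma \ref{lem:idempotent_e}. Your arithmetic for the displayed formula also checks out.
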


%%%%%%%%%%%%%%%%%%%%%%%%%%%%%%%%%%%%%%%%%%%%%%%%%%%%%%%%%%%%%%%%%%%%%%%%%%%%%%%%
\subsection{Extending $\alpha$ for $r>3$}
\label{subsect:extend_alpha}

The isomorphism $\alpha$ was introduced for the case $r=2$, where one only has to consider the terms $\Lambda^3  \otimes \Lambda^1 $ and $\Lambda^1  \otimes \Lambda^3$. More generally, for each $1 \leq i \leq r$, one has
$
T^{ i-1} \otimes \Lambda^3 \otimes T^{ r-i}
$  
in which $\Lambda^3 $ appears as the $i$th tensor factor.

\begin{nota}
\label{nota:alpha_ij}
For integers $1 \leq i < j \leq r$, let $\alpha_{i;j}$ denote the isomorphism
\[
\alpha_{i;j} : T ^{ i-1} \otimes \Lambda^3  \otimes T ^ {r-i}
\rightarrow 
T^{ j-1} \otimes \Lambda^3 \otimes T^{r-j}
\]
given by applying $\alpha : \Lambda^3  \otimes \Lambda^1  \rightarrow \Lambda^1  \otimes \Lambda^3$ with respect to the $i$th and $j$th tensor factors and acting by the identity on the remaining factors.
\end{nota}

\begin{rem}
\ 
\begin{enumerate}
\item 
We only require to define $\alpha_{i;j}$ for $i<j$, since we can use $\alpha_{j;i}^{-1}$ to cover the cases $i>j$. 
\item 
By Proposition \ref{prop:new_tau_alpha},  we have $\alpha_{j;i}^{-1} = \tau_{j;i} \circ \alpha_{j;i} \circ \tau_{j:i}$, where $\tau_{j;i}$ transposes the $i$th and $j$th tensor factors.
\end{enumerate}
\end{rem}

\begin{exam}
\label{exam:alpha}
\ 
\begin{enumerate}
\item 
For $r=2$, one has $\alpha_{1;2} = \alpha$. 
\item 
If $r>2$, then $\alpha_{1:2}$ is $\alpha \otimes \id_{T^{ r-2}}$.
\end{enumerate}
\end{exam}

The significance of $\alpha$ is shown by the following 

\begin{lem}
\label{lem:alpha_r=2}
Consider $X_1 \in \Lambda^3 \otimes \Lambda^1$ and $X_2 \in \Lambda^1 \otimes \Lambda^3$. 
Then $(X_1 , X_2) \in \Lambda^3 \otimes \Lambda^1 \ \oplus \ \Lambda^1 \otimes \Lambda^3$ lies in the kernel of $\dbar_2$ if and only if 
$$
\alpha_{1;2} (X_1) = \alpha (X_1)  = - X_2.
$$
\end{lem}

\begin{proof}
By Lemma \ref{lem:r=2}, $(X_1 , X_2) \in \Lambda^3 \otimes \Lambda^1 \ \oplus \ \Lambda^1 \otimes \Lambda^3$ lies in the kernel of $\dbar_2$ if and only if $\kappa (X_1) + \tilde\kappa (X_2) =0$. By the definition of $\alpha$, this is equivalent to the condition in the statement, using the identification $\alpha_{1;2}= \alpha$ observed in Example \ref{exam:alpha}.
\end{proof}

%%%%%%%%%%%%%%%%%%%%%%%%%%%%%%%%%%%%%%%%%%%%%%%%%%%%%%%%%%%%%%%%%%%%%%%%%%%%%%%%%%%%
\subsection{Identifying $\ker \dbar_r$}

Suppose that $r \geq 2$. Motivated by Proposition \ref{prop:H_1_by_dbar}, we seek to calculate $\ker \dbar_r$, where $\dbar_r$ is the morphism introduced in Notation \ref{nota:dbar}. The latter has the form given in Lemma \ref{lem:domain_codomain_dbar}:
\[
\dbar_r \colon 
\bigoplus_{i=1}^r 
T^{i-1} \otimes \Lambda^3 \otimes T^{r-i}
\rightarrow 
\bigoplus 
_{1 \leq i< j \leq r} 
T^{i-1} \otimes \Lambda^2 \otimes T^{j-i-1} \otimes \Lambda^2 \otimes T^{r-j}.
\]
This is described explicitly by Lemma \ref{lem:dbar} in terms of $\kappa$. 

Given a polynomial functor $F$,  we will use the shorthand `element' to refer to an element of $F(V)$ for some $V \in \ob \fmodq$, and write $x \in F$ for such an element.

\begin{nota}
\label{nota:X_i}
Write an element of $\bigoplus_{i=1}^r 
T^{i-1} \otimes \Lambda^3 \otimes T^{r-i}$ as $(X_i)_{1 \leq i \leq r}$, where $X_i \in T^{i-1} \otimes \Lambda^3 \otimes T^{r-i}$.
\end{nota}

\begin{prop}
\label{prop:X_i_ker_dbar}
Suppose that $r \geq 2$. An element $(X_i)_{1 \leq i \leq r}$ of $\bigoplus_{i=1}^r 
T^{i-1} \otimes \Lambda^3 \otimes T^{r-i}$ lies in $\ker \dbar_r$ if and only if, for all $1 \leq i < j \leq r$, one has 
\[
\alpha_{i;j}( X_i ) =  - X_j.
\]
\end{prop}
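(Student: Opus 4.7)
The plan is to analyze $\dbar_r$ one summand of the codomain at a time. Fix $1 \leq i < j \leq r$ and consider the projection of $\dbar_r((X_i)_i)$ onto the $(i,j)$-summand $T^{i-1}\otimes \Lambda^2 \otimes T^{j-i-1}\otimes \Lambda^2 \otimes T^{r-j}$ of the codomain. By Lemma \ref{lem:dbar}, the $(1,2)$-component of $\dbar_r$ restricted to the first summand $\Lambda^3 \otimes T^{r-1}$ is $\kappa \otimes \id_{T^{r-2}}$; $\sym_r$-equivariance then forces the only non-zero contributions to the $(i,j)$-summand to come from the summands $X_i$ and $X_j$ of the domain (no other summand has $\Lambda^3$ in a position compatible with producing $\Lambda^2 \otimes \Lambda^2$ at positions $(i,j)$). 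Explicitly, the contribution from $X_i$ is given by applying $\kappa$ to the $i$th factor ($\Lambda^3$) and the $j$th factor ($\Lambda^1$); write this as $\kappa_{i,j}(X_i)$. Similarly the contribution from $X_j$ is $\tilde{\kappa}_{i,j}(X_j)$, applying $\tilde{\kappa}$ to the $i$th and $j$th factors of $X_j$.

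Thus $(X_i)_i \in \ker \dbar_r$ if and only if, for every pair $1 \leq i < j \leq r$,
\[
\kappa_{i,j}(X_i) + \tilde{\kappa}_{i,j}(X_j) = 0.
\]
Now I invoke the defining diagram (\ref{eqn:alpha_diagram}) of $\alpha$: in the two-factor setting, $\kappa$ and $\tilde{\kappa}$ have the same image and $\alpha$ is precisely the isomorphism satisfying $\kappa = \tilde{\kappa} \circ \alpha$. Applying this at the $(i,j)$-positions (and using the identity on the remaining tensor factors) yields $\kappa_{i,j} = \tilde{\kappa}_{i,j} \circ \alpha_{i;j}$, so the above equation becomes $\tilde{\kappa}_{i,j}\bigl(\alpha_{i;j}(X_i) + X_j\bigr) = 0$.

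Finally, $\tilde{\kappa}$ is injective by Lemma \ref{lem:injective}, and this injectivity is preserved by tensoring with the identity on the remaining factors (over $\rat$), so $\tilde{\kappa}_{i,j}$ is injective. Hence the vanishing of the $(i,j)$-component is equivalent to $\alpha_{i;j}(X_i) + X_j = 0$, which is the desired relation. The only subtle point is the initial bookkeeping verifying that no summand $X_k$ with $k \notin \{i,j\}$ contributes to the $(i,j)$-component; this follows directly from the description of $\dbar_r$ in Lemma \ref{lem:dbar} together with $\sym_r$-equivariance, since obtaining $\Lambda^2$ factors at both positions $i$ and $j$ from a single $\Lambda^3$ requires that $\Lambda^3$ to sit at position $i$ or $j$.
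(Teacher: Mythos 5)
Your proof is correct and follows essentially the same approach as the paper's own argument; you have simply made explicit what the paper compresses into the phrase ``by definition of $\alpha_{i;j}$,'' namely the factorization $\kappa = \tilde{\kappa}\circ\alpha$ from diagram (\ref{eqn:alpha_diagram}) and the injectivity of $\tilde{\kappa}$ from Lemma \ref{lem:injective}. The bookkeeping observation that only $X_i$ and $X_j$ contribute to the $(i,j)$-component is also used (implicitly) in the paper, and your justification via Lemma \ref{lem:dbar} and $\sym_r$-equivariance is the right one.
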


\begin{proof}
The case $r=2$ has already been established by Lemma \ref{lem:alpha_r=2}; the general case extends this, as explained below.

The situation can be visualized by using the $1$-skeleton of the $(r-1)$-simplex,  as in Section \ref{subsect:geometric_vis}. The elements $X_i$ label the black vertices (for $1 \leq i \leq r$) and the arrows correspond to the components of $\dbar_r$; a white vertex corresponds to a pair $i < j$.

By the previous identifications, the element $(X_i)_{1 \leq i \leq r}$ lies in the kernel of $\dbar_r$ if and only if, for each $i<j$, the respective images of $X_i$ and $X_j$ in the corresponding white vertex sum to zero. 

Explicitly, the pair $(X_i, X_j)$ must lie in the kernel of the corresponding map 
\[
(T^{i-1} \otimes \Lambda^3 \otimes T^{r-i})
\ 
\oplus 
\ 
(T^{j-1} \otimes \Lambda^3 \otimes T^{r-j})
\rightarrow 
T^{i-1} \otimes \Lambda^2 \otimes T^{j-i-1} \otimes \Lambda^2 \otimes T^{r-j}. 
\] 
The component maps are induced by $\kappa$ and $\tilde{\kappa}$ applied with respect to the $i$th and $j$th tensor factors. For example, in the case $i=1$, $j=2$, the components identify respectively as $\kappa \otimes \id_{T^{r-2}}$ and $\tilde{\kappa} \otimes \id_{T^{r-2}}$. 

As in the proof of the case $r=2$ in Lemma \ref{lem:alpha_r=2}, the previous condition is equivalent to $\alpha_{i;j} (X_i) = -X_j$, by the definition of $\alpha_{i;j}$.
\end{proof}

This has the immediate consequence:

\begin{cor}
\label{cor:ell_inclusion}
Suppose that $r \geq 2$. For any $1 \leq  \ell \leq r$, the projection 
\[
\bigoplus_{i=1}^r 
T^{i-1} \otimes \Lambda^3 \otimes T^{r-i}
\twoheadrightarrow 
T^{\ell-1} \otimes \Lambda^3 \otimes T^{r-\ell}
\]
onto the $\ell$th factor induces an inclusion 
\begin{eqnarray}
\label{eqn:include_ker_dbar}
\ker \dbar_r \hookrightarrow T^{\ell-1} \otimes \Lambda^3 \otimes T^{r-\ell}.
\end{eqnarray}
\end{cor}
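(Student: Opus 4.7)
The plan is to deduce Corollary \ref{cor:ell_inclusion} directly from the explicit description of $\ker \dbar_r$ provided by Proposition \ref{prop:X_i_ker_dbar}, using the fact that each $\alpha_{i;j}$ is an isomorphism.

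First, I would fix $1 \leq \ell \leq r$ and consider an element $(X_i)_{1 \leq i \leq r}$ of $\ker \dbar_r$ such that its $\ell$th component $X_\ell$ vanishes. By Proposition \ref{prop:X_i_ker_dbar}, for every index $i \neq \ell$ one has a relation of the form $\alpha_{i;j}(X_i) = -X_j$ with $\{i,j\} = \{i,\ell\}$ (taking $i<\ell$ or $\ell<i$ as appropriate). Explicitly: if $i < \ell$, then $\alpha_{i;\ell}(X_i) = -X_\ell = 0$; if $i > \ell$, then $\alpha_{\ell;i}(X_\ell) = -X_i$, so $X_i = -\alpha_{\ell;i}(0) = 0$.

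Since each $\alpha_{i;j}$ is an isomorphism (by construction in Notation \ref{nota:alpha_ij}, built from the isomorphism $\alpha$ of Definition \ref{defn:alpha}), the first case also forces $X_i = 0$. Hence all components $X_i$ vanish, proving that the projection onto the $\ell$th factor, when restricted to $\ker \dbar_r$, has trivial kernel. This is exactly the required injectivity.

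There is no real obstacle here: the corollary is essentially a reformulation of Proposition \ref{prop:X_i_ker_dbar}, observing that the system of relations $\alpha_{i;j}(X_i) = -X_j$ allows one to reconstruct every $X_i$ from any single chosen component $X_\ell$, so the whole tuple is already determined by $X_\ell$. The only thing to check is that the $\alpha_{i;j}$ are invertible, which is built into their definition.
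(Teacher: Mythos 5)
Your proof is correct and takes precisely the approach the paper intends: Corollary \ref{cor:ell_inclusion} is stated in the paper as an ``immediate consequence'' of Proposition \ref{prop:X_i_ker_dbar}, and your argument makes that deduction explicit by showing that the system of relations $\alpha_{i;j}(X_i)=-X_j$, together with the invertibility of each $\alpha_{i;j}$, determines every component $X_i$ from $X_\ell$ alone.
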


\begin{rem}
In general (as follows from Theorem \ref{THM}, for example) the inclusion (\ref{eqn:include_ker_dbar}) is not an isomorphism. For instance, for $r=3$, we can choose any $X_1 \in \Lambda^3 \otimes T^2$ and then set $X_2:=  -\alpha_{1;2} (X_1)$ and $X_3 := -\alpha_{1;3} (X_1)$. This corresponds to an element of $\ker \dbar_3$ if and only if 
$ 
\alpha_{2;3} (X_2) = - X_3.
$  
This condition is not satisfied for every $X_1$. The additional condition can be considered as a `cocycle' condition.
\end{rem}

However, for $r=2$, there is no `cocycle' condition, so that the inclusion (\ref{eqn:include_ker_dbar}) gives  an isomorphism $\ker \dbar_2 \cong  \Lambda^3 \otimes \Lambda^1$, taking $\ell =1$. This can be strengthened to take into account the $\sym_2$-equivariance: 

\begin{thm}
\label{thm:r=2}
For $r=2$, there is a $\sym_2$-equivariant isomorphism:
\[
\ker \dbar_2 
\cong 
\Lambda^4 \boxtimes \sgn_2 
\ \oplus \ 
\schur_{(2,1^2)} \boxtimes \triv_2. 
\]
Moreover, up to non-zero scalar
\begin{enumerate}
\item 
 the underlying inclusion $\Lambda^4 \hookrightarrow \Lambda^3 \otimes \Lambda^1 \oplus \Lambda^1 \otimes \Lambda^3$ is given by the coproduct of $\Lambda^*$;
\item 
the underlying inclusion $\schur_{(2,1^2)}  \hookrightarrow \Lambda^3 \otimes \Lambda^1 \oplus \Lambda^1 \otimes \Lambda^3$ corresponds to the commutative diagram:
\[
\xymatrix{
&
\schur_{(2,1^2)}
\ar@{^(->}[ld]
\ar[rd] 
\ar@{}[d]
\\
\Lambda^3 \otimes \Lambda^1 
\ar[rr]^\tau
&&
\Lambda^1 \otimes \Lambda^3
}
\] 
associated to $\schur_{(2,1^2)} \subset \Lambda^3 \otimes \Lambda^1 $.
\end{enumerate}
\end{thm}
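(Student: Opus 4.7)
The plan is to use Proposition \ref{prop:X_i_ker_dbar} in the simplest non-trivial case $r=2$, where the ``cocycle'' condition is empty: there is only the single constraint $\alpha_{1;2}(X_1) = -X_2$. Hence the first projection from $\ker \dbar_2$ to $\Lambda^3 \otimes \Lambda^1$ is an isomorphism of functors, with inverse $X_1 \mapsto (X_1, -\alpha(X_1))$. Combined with the canonical splitting $\Lambda^3 \otimes \Lambda^1 \cong \Lambda^4 \oplus \schur_{(2,1^2)}$ of Lemma \ref{lem:decompose}, this immediately gives the underlying splitting of $\ker \dbar_2$ in the theorem.

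The next step is to pin down the $\sym_2$-action. The generator of $\sym_2$ swaps the two summands of $\Lambda^3 \otimes \Lambda^1 \oplus \Lambda^1 \otimes \Lambda^3$ by $(X_1, X_2) \mapsto (\tau(X_2), \tau(X_1))$, where $\tau$ transposes tensor factors. Transporting this along the isomorphism $\ker \dbar_2 \cong \Lambda^3 \otimes \Lambda^1$ (given by the first projection) yields the automorphism $X_1 \mapsto \tau(-\alpha(X_1)) = -\tau \circ \alpha(X_1)$. Applying Proposition \ref{prop:new_tau_alpha}, which identifies $\tau \circ \alpha$ with $(1,-1)$ under $\mathrm{End}(\Lambda^3 \otimes \Lambda^1) \cong \rat_{(1^4)} \prod \rat_{(2,1^2)}$, the $\sym_2$-action becomes $-1$ on the $\Lambda^4$ factor and $+1$ on the $\schur_{(2,1^2)}$ factor. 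This produces the stated $\sym_2$-equivariant decomposition.

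For the ``moreover'' statements, I would trace through the inverse $X_1 \mapsto (X_1, -\alpha(X_1))$ on each isotypic component, using that Proposition \ref{prop:new_tau_alpha} determines $\alpha$ on each factor up to its action on the decomposition of Lemma \ref{lem:decompose}. For $\schur_{(2,1^2)}$, the computation is direct: since $\tau \circ \alpha = -\id$ on $\schur_{(2,1^2)} \subset \Lambda^3 \otimes \Lambda^1$, one has $-\alpha(y) = \tau(y)$, so the inclusion of $\schur_{(2,1^2)}$ is $y \mapsto (y, \tau(y))$, which is precisely the commutative diagram in statement (2). For $\Lambda^4$, one has $\tau \circ \alpha = \id$, hence $-\alpha(\psi(x)) = -\tau(\psi(x))$, where $\psi : \Lambda^4 \hookrightarrow \Lambda^3 \otimes \Lambda^1$ denotes the coproduct; the inclusion of $\Lambda^4$ is then $x \mapsto (\psi(x), -\tau\psi(x))$, which identifies, up to scalar, with the total coproduct $\Lambda^4 \to \Lambda^3 \otimes \Lambda^1 \oplus \Lambda^1 \otimes \Lambda^3$ once one recalls the graded cocommutativity of $\Lambda^*$ (the sign $(-1)^{3\cdot 1}=-1$ makes $\psi'(x) = -\tau\psi(x)$ for the $\Lambda^1 \otimes \Lambda^3$ coproduct $\psi'$).

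The only real obstacle is bookkeeping of signs and making sure the identifications of the two inclusions are genuinely characterized ``up to scalar''; the latter is guaranteed by Schur's lemma (Proposition \ref{prop:nat_schur}) applied to the simple functors $\Lambda^4$ and $\schur_{(2,1^2)}$, so all that really needs to be shown is that each composite is non-zero, which is immediate from the explicit formulas above. No genuinely new computation is required beyond what is already packaged in Proposition \ref{prop:new_tau_alpha} and Corollary \ref{cor:new_tau_alpha}.
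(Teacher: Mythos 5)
Your proposal is correct and follows essentially the same route as the paper: identify $\ker\dbar_2 \cong \Lambda^3\otimes\Lambda^1$ via the first projection using Proposition \ref{prop:X_i_ker_dbar} (the case $r=2$ has no cocycle condition), decompose by Pieri/Lemma \ref{lem:decompose}, and determine the $\sym_2$-action via Proposition \ref{prop:new_tau_alpha}. You merely make explicit the sign-chasing (including the graded cocommutativity sign identifying $-\tau\psi$ with the $\Lambda^1\otimes\Lambda^3$-component of the coproduct) that the paper compresses into ``follows directly from this.''
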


\begin{proof}
By the discussion preceding the statement and using the case $r=2$ of Proposition \ref{prop:X_i_ker_dbar}, we have an isomorphism 
\begin{eqnarray}
\label{eqn:iso_r=2}
\Lambda^3 \otimes \Lambda^1 \stackrel{\cong} {\rightarrow } \ker \dbar_2 \subset \Lambda^3 \otimes \Lambda^1 \ \oplus \ \Lambda^1 \otimes \Lambda^3, 
\end{eqnarray}
where the composite is given by $X\mapsto (X, - \alpha X)$. 

The action of $\sym_2$ on $\ker \dbar_2$ is induced by the transposition of tensor factors $\Lambda^3 \otimes \Lambda^1 
\stackrel{\tau}{\rightarrow}
\Lambda^1 \otimes \Lambda^3$. Hence, under the isomorphism (\ref{eqn:iso_r=2}),  this corresponds to the automorphism $X \mapsto - (\tau \circ \alpha) X$ of $\Lambda^3 \otimes \Lambda^1$. By Proposition \ref{prop:new_tau_alpha}, this acts via the diagonal matrix $(-1, 1)$ (using the identification of Lemma \ref{lem:schur_Lambda3_otimes_Lambda1}, as in  Proposition \ref{prop:new_tau_alpha}).
 This proves the first statement and also identifies the inclusion of $\schur_{(2,1^2)}$. 

The identification of the embedding of $\Lambda^4$ then follows from the graded cocommutativity of the coproduct of $\Lambda^*$. 
\end{proof}

%%%%%%%%%%%%%%%%%%%%%%%%%%%%%%%%%%%%%%%%%%%%%%%%%%%%%%%%%%%%%%%%%%%%%%%%%%%%%%%%%
\subsection{A lower bound for $\ker \dbar_r$}
\label{subsect:lower}

Generalizing the case $r=2$, we establish a lower bound for $\ker \dbar_r$ for $r\geq 2$. This will later be shown to coincide with $\ker \dbar_r$.

\begin{prop}
\label{prop:lower_bound_ker}
For $r \geq 2$, there is a $\sym_r$-equivariant inclusion:
\[
\Lambda^{r+2}  \boxtimes \sgn_r \   \oplus \  \schur_{(r, 1^2)}  \boxtimes \triv_r
\hookrightarrow \ker \dbar_r.
\]
\end{prop}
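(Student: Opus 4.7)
The plan is to construct two injective $\sym_r$-equivariant natural transformations
\[
\phi_\Lambda : \Lambda^{r+2} \boxtimes \sgn_r \to \ker \dbar_r \quad \text{and} \quad \phi_S : \schur_{(r,1^2)} \boxtimes \triv_r \to \ker \dbar_r,
\]
and to conclude by Proposition \ref{prop:nat_schur}: since $\Lambda^{r+2}$ and $\schur_{(r,1^2)}$ are non-isomorphic simple functors, the images of the two maps intersect trivially, giving the required direct-sum inclusion. By Proposition \ref{prop:X_i_ker_dbar}, a tuple $(X_i)_{1\leq i \leq r}$ lies in $\ker \dbar_r$ precisely when $\alpha_{i;j}(X_i) = -X_j$ for all $i < j$; this is the condition to be verified in each construction.

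For $\phi_\Lambda$, let $\psi_i : \Lambda^{r+2} \to T^{i-1}\otimes \Lambda^3 \otimes T^{r-i}$ denote the iterated coproduct of $\Lambda^*$ placing $\Lambda^3$ in the $i$th slot (the remaining slots being $\Lambda^1 = V$), and set $\phi_\Lambda(\omega) = (\psi_i(\omega))_{1\leq i \leq r}$. The condition $\alpha_{i;j}(\psi_i(\omega)) = -\psi_j(\omega)$ reduces, by coassociativity of the coproduct, to the two-slot identity $\alpha \circ \psi_{3,1} = -\psi_{1,3}$ on $\Lambda^4 \subset \Lambda^3\otimes \Lambda^1$; this follows from $\tau \circ \alpha|_{\Lambda^4} = \id$ (Proposition \ref{prop:new_tau_alpha}) together with the graded co-commutativity $\tau \circ \psi_{3,1} = -\psi_{1,3}$. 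The $\sgn_r$-equivariance is a direct Koszul-sign check: a transposition of two tensor slots acts on $\psi_i(\omega)$ with sign $-1$, independently of whether the $\Lambda^3$-slot is involved (both the $(d_a,d_b)=(1,1)$ and $(3,1)$ cases, with intermediate slots of degree $1$, give a single factor of $-1$), matching the sign representation.

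For $\phi_S$, take the inclusion $\iota : \schur_{(r,1^2)} \hookrightarrow \Lambda^3 \otimes \Gamma^{r-1} \subset \Lambda^3\otimes T^{r-1}$ from Example \ref{exam:schur_embedding} and set $X_1(y) = \iota(y)$ and $X_i(y) = \tau_{1;i}(\iota(y))$ for $i \geq 2$. The trivial $\sym_r$-equivariance follows from the $\sym_{r-1}$-invariance of $\iota(y) \in \Lambda^3\otimes \Gamma^{r-1}$ on slots $2,\ldots,r$: for any $\sigma \in \sym_r$ and $i$, the permutation $\tau_{1;\sigma(i)}\, \sigma\, \tau_{1;i}$ fixes slot $1$, hence lies in $\sym_{r-1}$ and acts trivially on $\iota(y)$. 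For the kernel condition with $i = 1$, Proposition \ref{prop:new_tau_alpha} yields $\ker(\alpha + \tau) = \schur_{(2,1^2)}$ inside $\Lambda^3 \otimes \Lambda^1$ (since $\alpha+\tau$ acts as $-2$ on $\Lambda^4$ and $0$ on $\schur_{(2,1^2)}$); combined with the description of $\diff$ in Section \ref{subsect:dR} and the $\sym_{r-1}$-symmetry of $\Gamma^{r-1}$, the condition $y \in \ker \diff$ translates into $(\alpha + \tau)_{1;j}(\iota(y)) = 0$ for every $j \geq 2$, i.e., $\alpha_{1;j}(X_1) = -X_j$. The remaining cases $2 \leq i < j$ follow from the $\sym_r$-conjugation identity $\alpha_{i;j} = \sigma\alpha_{1;2}\sigma^{-1}$ for any $\sigma$ with $\sigma(1) = i$ and $\sigma(2) = j$: the auxiliary permutations $\sigma^{-1}\tau_{1;i}$ and $\tau_{1;j}^{-1}\sigma\tau_{1;2}$ both fix slot $1$ and therefore act trivially on $\iota(y)$, reducing everything to the $i=1$ case and yielding $\alpha_{i;j}(X_i) = -X_j$.

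Injectivity of both $\phi_\Lambda$ and $\phi_S$ is immediate by projecting onto the $i=1$ summand, and the direct sum of the images then follows from Proposition \ref{prop:nat_schur}. The main technical obstacle is the cocycle verification for $\phi_S$ at indices $i \geq 2$; once the $i=1$ case is reduced to the identification $\ker(\alpha + \tau) = \schur_{(2,1^2)}$, the general case is formal but relies at every step on the $\Gamma^{r-1}$-symmetry of $\iota(y)$ to absorb the residual permutations produced by the conjugation.
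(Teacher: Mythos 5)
Your proposal is correct and follows essentially the same route as the paper: you construct the same candidate inclusions (your $X_i = \tau_{1;i}(\iota(y))$ agrees with the paper's coproduct-of-$\Gamma^*$ composite, since $\iota(y)\in\Lambda^3\otimes\Gamma^{r-1}$ is symmetric in the last $r-1$ slots), and both arguments rest on the $r=2$ analysis of $\tau\circ\alpha$ from Proposition \ref{prop:new_tau_alpha} together with Example \ref{exam:schur_embedding} to verify the kernel conditions. Your conjugation computation for indices $2\le i<j\le r$ simply makes explicit what the paper attributes to the coassociativity of the coproduct of $\Gamma^*$.
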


\begin{proof}
We first observe that the terms $\Lambda^{r+2}  \boxtimes \sgn_r$ and $\schur_{(r, 1^2)} \boxtimes \triv_r$ are simple and non-isomorphic, hence it suffices to exhibit the appropriate inclusions for the individual factors. 

For $\lambda \in \{ (1^{r+2}), (r, 1^2) \}$, we have the following useful identification
$$
\mathrm{Nat} (\schur_\lambda, T^{i-1} \otimes \Lambda^3  \otimes T^{ r-i}) = \rat 
$$
for each $1 \leq i \leq r$, that is seen as follows (noting that it suffices to show this for $i=1$). Observe that the Young diagram for  $(1^{r+2})$ is obtained from that for $(1^3)$ by adding $r-1$ boxes to the first (and unique) column; likewise, the Young diagram for $(r, 1^2)$ is obtained from that for $(1^3)$ by adding $r-1$ boxes to the first row. The assertion follows by  using Pieri's rule in conjunction with this observation, together with Proposition \ref{prop:nat_schur}.

Let us fix inclusions (unique up to non-zero scalar multiple, by the above) 
\begin{eqnarray*}
\phi_1 &:& \Lambda^{r+2}  \hookrightarrow  \Lambda^3 \otimes T^{r-1} \\
\psi_1 &: &\schur_{(r, 1^2)}  \hookrightarrow  \Lambda^3 \otimes T^{r-1},
\end{eqnarray*}
where the subscripts indicate that these correspond to $i=1$. To be concrete, we take $\phi_1$ to be the iterated coproduct for the exterior algebra $\Lambda^*$. For $\psi_1$, we first choose a (unique up to non-zero scalar multiple) inclusion $\schur_{(r, 1^2)} \hookrightarrow \Lambda^3 \otimes \Gamma^{r-1}$ (see Example \ref{exam:schur_embedding}), and then compose with the inclusion $\Lambda^3 \otimes \Gamma^{r-1} \hookrightarrow \Lambda^3 \otimes T^{r-1}$ induced by the canonical inclusion $\Gamma^{r-1} \subset T^{r-1}$.

Recall that  the group $\sym_r$ acts on the domain $(\Lambda^3 \otimes T^{r-1}) \uparrow _{\sym_{r-1}} ^{\sym_r}$ of $\dbar_r$. In particular, for any $\sigma \in \sym_r$, we have the restriction of the action of $\sigma$ to the direct summand $\Lambda^3 \otimes T^{r-1}$:
$$
\Lambda^3 \otimes T^{r-1} 
\stackrel{\sigma}{\rightarrow} 
T^{i-1} \otimes \Lambda^3  \otimes T^{ r-i}
\subset (\Lambda^3 \otimes T^{r-1}) \uparrow _{\sym_{r-1}} ^{\sym_r}
$$
where $i = \sigma^{-1} (1)$. (Note the (slightly abusive) usage of the notation $\sigma$.)

We claim that, for given $i$ and $\sigma \in \sym_r$ such that $i = \sigma^{-1} (1)$, the following composites:
\begin{eqnarray}
\label{eqn:phi_i}
&&\Lambda^{r+2} \stackrel{\phi_1}{\longrightarrow} \Lambda^3 \otimes T^{r-1} 
\stackrel{\sgn(\sigma)\sigma}{\longrightarrow }
T^{i-1} \otimes \Lambda^3  \otimes T^{ r-i}
\\
&&
\label{eqn:psi_i}
\schur_{(r, 1^2)}   \stackrel{\psi_1}{\longrightarrow} \Lambda^3 \otimes T^{r-1}\stackrel{\sigma}{\longrightarrow }
T^{i-1} \otimes \Lambda^3  \otimes T^{ r-i}
\end{eqnarray}
do not depend on the choice of $\sigma$. This follows easily from the explicit descriptions of $\phi_1$ and $\psi_1$ given above. 

Hence, for any such choice of $\sigma$, we may define $\phi_i$ to be the composite (\ref{eqn:phi_i}) and $\psi_i$ to be the composite (\ref{eqn:psi_i}). (This definition is imposed  by the isomorphism that we are trying to establish,  due to  the terms $\sgn_r$ and $\triv_r$ appearing in the statement.) By construction, these yield an inclusion
$$
\Lambda^r \oplus \schur_{(r,1^2)} 
\hookrightarrow 
(\Lambda^3 \otimes T^{r-1}) \uparrow _{\sym_{r-1}} ^{\sym_r}.
$$
To conclude, we must prove that they map to $\ker \dbar_r$, since the construction ensures that the $\sym_r$-action is correct (as remarked above).

Hence, by Proposition \ref{prop:X_i_ker_dbar}, it suffices to prove that, for any $i<j$, the following two diagrams anti-commute:
$$
\xymatrix{
& 
\Lambda^{r+2}
\ar[ld]_{\phi_i}
\ar[rd]^{\phi_j}
\ar@{}[d]|{-}
\\
T^{i-1} \otimes \Lambda^3  \otimes T^{ r-i}
\ar[rr]_{\alpha_{i;j}}
&&
T^{j-1} \otimes \Lambda^3  \otimes T^{ r-j}
}
$$
 and 
 $$
\xymatrix{
& 
\schur_{(r,1^2)} 
\ar[ld]_{\psi_i}
\ar[rd]^{\psi_j}
\ar@{}[d]|{-}
\\
T^{i-1} \otimes \Lambda^3  \otimes T^{ r-i}
\ar[rr]_{\alpha_{i;j}}
&&
T^{j-1} \otimes \Lambda^3  \otimes T^{ r-j}.
}
$$
 
The case $i=1$ and $j=2$ is a straightforward extension of Theorem \ref{thm:r=2}. (For the case $\Lambda^{r+2}$, one uses the coassociativity of the coproduct of $\Lambda^*$ (denoted $\Delta$ below) to give the factorization of $\phi_1$ as the composition:
$$
\Lambda^{r+2} \stackrel{\Delta}{\longrightarrow} \Lambda^4 \otimes \Lambda^{r-2} \stackrel{\Delta \otimes \Delta}{\longrightarrow} \Lambda^3 \otimes \Lambda^1 \otimes (\Lambda^1)^{\otimes r-2} \cong \Lambda^3 \otimes T^{r-1}. 
$$
This makes the relationship with the case $r=2$ transparent.)

Finally, for the general case $i<j$, we exploit the relationship between the different $\alpha_{i;j}$ in terms of the $\sym_r$-action to reduce to the case $i=1$, $j=2$. This is based on the following commutative diagram, using restrictions of the (left) action of $\sym_r$ on   $(\Lambda^3 \otimes T^{r-1}) \uparrow _{\sym_{r-1}} ^{\sym_r}$, as above:
\[
\xymatrix{
\Lambda^3 \otimes T^{r-1}
\ar[r]^{(12)}
\ar[d]_{(1i)}
&
T^1 \otimes \Lambda^3 \otimes T^{r-2}
\ar[d]^{(1i)(12)(j2)} 
\\
T^{i-1} \otimes \Lambda^3 \otimes T^{r-i}
\ar[r]
_{(ij)}
&
T^{j-1} \otimes \Lambda^3 \otimes T^{j-i}
}
\]
(the commutativity is checked directly in $\sym_r$). 

One has an analogous commutative diagram in which the horizontal maps are replaced by $\alpha_{1;2}$ and $\alpha_{i;j}$ respectively:
\[
\xymatrix{
\Lambda^3 \otimes T^{r-1}
\ar[r]^{\alpha_{1;2}}
\ar[d]_{(1i)}
&
T^1 \otimes \Lambda^3 \otimes T^{r-2}
\ar[d]^{(1i)(12)(j2)} 
\\
T^{i-1} \otimes \Lambda^3 \otimes T^{r-i}
\ar[r]
_{\alpha_{i;j}}
&
T^{j-1} \otimes \Lambda^3 \otimes T^{j-i}.
}
\]

Composing the respective anti-commutative triangles for the case $i=1$, $j=2$ with this commutative square yields the general case $i<j$, by the definition of $\phi_i$ (respectively $\psi_i$) in terms of $\phi_1$ (resp. $\psi_1$). In the case $\Lambda^{r+2}$, this relies upon the fact that {\em both} the vertical maps are induced by permutations of sign $-1$.
\end{proof}

\begin{rem}
The case $r=3$ of this result is established explicitly in the (easy part of the) proof of Theorem \ref{thm:case_r=3}. Moreover, this can be used to give an alternative  proof of Proposition \ref{prop:lower_bound_ker}.
\end{rem}

%\input{main}
%%%%%%%%%%%%%%%%%%%%%%%%%%%%%%%%%%%%%%%%%%%%%%%%%%%%%%%%%%%%%%%%%%%%%%%%%
\section{The proof of Theorem \ref{THM}}
\label{sect:main}

The purpose of this section is to prove Theorem \ref{THM}, based on the analysis of $\ker \dbar_r$ (for $r\geq 2$) by applying Proposition \ref{prop:H_1_by_dbar}. The (calculational) proof of the  initial step of the inductive proof is postponed until Section \ref{sect:case_r3} (that proof is entirely independent of the material in this section). 
 
The proof exploits the lemmas concerning Schur functors given in Section \ref{subsect:preparatory}.
 
%%%%%%%%%%%%%%%%%%%%%%%%%%%%%%%%%%%%%%%%%%%%%%%%%%%
\subsection{Preparatory lemmas}
\label{subsect:preparatory}

Consider the exterior power functors $\Lambda^n \cong \schur_{(1^n)} $, for $n \in \nat$. The coproduct provides the inclusion $\Lambda^{n+1}  \hookrightarrow \Lambda^n  \otimes \Lambda^1 $;  up to non-zero scalar multiple, this is the unique non-trivial natural transformation of this form.  On applying $-\otimes \Lambda^1 $, one therefore obtains $\Lambda^{n+1} \otimes \Lambda^1  \hookrightarrow \Lambda^n  \otimes T^2$. Post-composing with $\id_{\Lambda^n } \otimes \tau$ gives a second, distinct embedding. 

Up to non-zero scalar multiple, there is a unique non-trivial map from $\Lambda^{n+2}$ to $\Lambda^n \otimes  T^2$, so that $\Lambda^{n+2}$ is canonically a subfunctor of the codomain. These subobjects of $\Lambda^n \otimes T^2$ are related by:

\begin{lem}
\label{lem:intersect_lambda}
For $n \geq 1$, the intersection of $\Lambda^{n+1}\otimes \Lambda^1 $ and $(\id \otimes \tau) \big(\Lambda^{n+1} \otimes \Lambda^1 \big)$ in  $\Lambda^n \otimes T^2$ is $\Lambda^{n+2}$.
\end{lem}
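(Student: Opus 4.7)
The plan is to analyse the two subfunctors $A := \Lambda^{n+1} \otimes \Lambda^1$ and $B := (\id \otimes \tau)(\Lambda^{n+1} \otimes \Lambda^1)$ of $\Lambda^n \otimes T^2$ using the eigenspace decomposition $\Lambda^n \otimes T^2 = (\Lambda^n \otimes \Lambda^2) \oplus (\Lambda^n \otimes S^2)$ of the involution $\id \otimes \tau$ on the last two tensor factors. By Pieri's rule, $A \cong \Lambda^{n+2} \oplus \schur_{(2,1^n)}$ as abstract functors, while
\[
\Lambda^n \otimes \Lambda^2 \cong \Lambda^{n+2} \oplus \schur_{(2,1^n)} \oplus \schur_{(2,2,1^{n-2})}, \qquad \Lambda^n \otimes S^2 \cong \schur_{(2,1^n)} \oplus \schur_{(3,1^{n-1})}
\]
(the last summand on the left being zero if $n=1$). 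In particular $\Lambda^{n+2}$ appears with multiplicity one in $\Lambda^n \otimes T^2$ (lying entirely inside the antisymmetric eigenspace), and $\schur_{(2,1^n)}$ with multiplicity two, with one copy in each eigenspace.

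For the inclusion $\Lambda^{n+2} \subseteq A \cap B$, I would invoke coassociativity of the coproduct of $\Lambda^*$: the canonical embedding $\Lambda^{n+2} \hookrightarrow \Lambda^n \otimes \Lambda^2 \hookrightarrow \Lambda^n \otimes T^2$ factors through $A$ via the iterated $(n+1,1)$- and $(n,1)$-coproducts. Since this image lies in the $(-1)$-eigenspace of $\id \otimes \tau$, it is preserved by $\id \otimes \tau$, so it also lies in $B = (\id \otimes \tau)(A)$.

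The reverse inclusion uses the same eigenspace decomposition. For $z \in A \cap B$, both $z$ and $(\id \otimes \tau)(z)$ belong to $A$, so their eigencomponents $z_{\pm} := \tfrac{1}{2}\bigl(z \pm (\id \otimes \tau)(z)\bigr)$ do too; being fixed up to sign by $\id \otimes \tau$, each $z_\pm$ also lies in $B$. Hence
\[
A \cap B = \bigl(A \cap (\Lambda^n \otimes \Lambda^2)\bigr) \oplus \bigl(A \cap (\Lambda^n \otimes S^2)\bigr),
\]
and it suffices to show $A \cap (\Lambda^n \otimes \Lambda^2) = \Lambda^{n+2}$ and $A \cap (\Lambda^n \otimes S^2) = 0$. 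By Schur's lemma (Proposition~\ref{prop:nat_schur}) applied to the two projections of $A$, and since the canonical $\Lambda^{n+2} \subset A$ already lies in $\Lambda^n \otimes \Lambda^2$, the task reduces to showing that the $\schur_{(2,1^n)}$ summand of $A$ projects \emph{non-trivially} to both $\Lambda^n \otimes \Lambda^2$ and $\Lambda^n \otimes S^2$.

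This is the only real obstacle, since a priori the copy of $\schur_{(2,1^n)}$ inside $A$ could be contained in one of the two eigenspaces, in which case $A \cap B$ would be strictly larger than $\Lambda^{n+2}$. I would settle it by an explicit computation on $V = \rat^{n+1}$: Lemma~\ref{lem:conn_schur} gives $\Lambda^{n+2}(V) = 0$, so $A(V) = \schur_{(2,1^n)}(V)$, and one verifies that a test element such as $(e_1 \wedge \cdots \wedge e_{n+1}) \otimes e_1$, pushed through $\psi \otimes \id$ into $\Lambda^n(V) \otimes V \otimes V$, has non-zero image under both the antisymmetrization and the symmetrization of the last two tensor factors.
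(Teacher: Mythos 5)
Your proof is correct, but it takes a genuinely different route from the paper's. The paper regards both $\Lambda^{n+1}\otimes\Lambda^1$ and its $(\id\otimes\tau)$-translate as subspaces of $T^{n+2}$ and identifies each as the sign-isotypical component for one of two embedded copies of $\sym_{n+1}$ in $\sym_{n+2}$ (the stabilizers of the letters $n+2$ and $n+1$ respectively); since these two subgroups generate $\sym_{n+2}$, the intersection is exactly the sign-isotypical subspace for the full $\sym_{n+2}$, i.e.\ $\Lambda^{n+2}$, with no multiplicity bookkeeping and no explicit element calculation needed. Your argument instead stays inside $\Lambda^n\otimes T^2$, splits into the $\pm 1$-eigenspaces of $\id\otimes\tau$, compares Pieri decompositions, and reduces to showing (by Schur's lemma plus the test element on $V=\rat^{n+1}$) that the $\schur_{(2,1^n)}$ summand of $\Lambda^{n+1}\otimes\Lambda^1$ projects non-trivially onto both eigenspaces; I checked that the reduction and the computation on the test element do go through. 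The trade-off: the paper's group-theoretic argument is shorter and more conceptual (and is essentially a template for the companion Lemma \ref{lem:intersect_schur}, as the paper notes), whereas your version exposes the multiplicity structure of the two subfunctors explicitly, at the cost of a Pieri analysis, a Schur's-lemma reduction, and an explicit verification that could not be dispensed with --- if that projection vanished, the intersection would indeed be strictly larger, so the computation is a genuine load-bearing step rather than a formality.
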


\begin{proof}
Write $\sym_{n+1}\subset \sym_{n+2}$ for the subgroup fixing $n+2$ and $\sym_{n+1}'\subset \sym_{n+2}$ for that fixing $n+1$. Considering $\Lambda^{n+1} \otimes \Lambda^1 $ and $(\id \otimes \tau) \big(\Lambda^{n+1} \otimes \Lambda^1  \big)$ as subobjects of $T^{n+2}$, the former identifies as the subspace of elements on which $\sigma \in \sym_ {n+1}$ acts  by multiplication by $\sgn (\sigma)$ and the latter similarly, with respect to $\sym_{n+1}'$. Since the subgroups $\sym_{n+1}$ and $\sym_{n+1}'$ generate $\sym_{n+2}$, it follows that any $\rho \in \sym_{n+2}$ acts on the intersection by multiplication by $\sgn(\rho)$. The result follows.
\end{proof}

We now consider the family of functors $\schur_{(n,1^2)}$, assuming that $n\geq 2$. 
One has the embedding
\[
\schur_{(n, 1^2)}
\hookrightarrow 
\Lambda^3  \otimes \Gamma^{n-1}
\hookrightarrow 
\Lambda^3 \otimes \Gamma^{n-2}  \otimes \Gamma^1,
\]
where the first map is given by Example \ref{exam:schur_embedding} and the second map by the coproduct of $\Gamma^*$. 

Applying the functor $-\otimes \Gamma^1$, one obtains the  inclusion 
\begin{eqnarray}
\label{eqn:include_n1^2_otimes_1}
\schur_{(n, 1^2)} \otimes \Gamma^1 \subset \Lambda^3 \otimes \Gamma^{n-2}  \otimes T^2.
\end{eqnarray}
 Post-composing with  $(\mathrm{Id} \otimes \tau)$ 
gives the inclusion $(\mathrm{Id} \otimes \tau)\big( \schur_{(n, 1^2)} \otimes \Gamma^1)  \subset  \Lambda^3  \otimes \Gamma^{n-2}  \otimes T^2$. 

Using the canonical inclusion $\Gamma^{n-2} \subset T^{n-2}$, we may consider the functors $\schur_ {(n,1^2)} \otimes \Gamma^1 $ and $(\mathrm{Id} \otimes \tau)\big(\schur_{(n, 1^2)} \otimes \Gamma^1 \big)$ as lying in $\Lambda^3 \otimes T^{n}$. 

The following result is a counterpart of Lemma \ref{lem:intersect_lambda}:

\begin{lem}
\label{lem:intersect_schur}
For $n \geq 2$, the intersection of $\schur_ {(n,1^2)} \otimes \Gamma^1 $ and $(\mathrm{Id} \otimes \tau)\big(\schur_{(n, 1^2)} \otimes \Gamma^1 \big)$ in $ \Lambda^3 \otimes T^n$ is isomorphic to 
$\schur_{(n+1,1^2)}$. 
\end{lem}

\begin{proof}
One first observes that the intersection, denoted here simply by $\bigcap$, lies in $\Lambda^3 (V) \otimes \Gamma^n (V)$.
This follows from an argument similar to that used in the proof of Lemma \ref{lem:intersect_lambda}.

It then remains to show (using the identification of $\schur_{(n+1,1^2)}$ given in Example \ref{exam:schur_embedding}) that this intersection lies in the kernel of 
$\diff$. 

By construction, there is a commutative diagram:
\[
\xymatrix{
\bigcap 
\ar@{^(->}[r]
\ar@{^(->}[d]
&
\Lambda^3  \otimes \Gamma^n 
\ar@{^(->}[d]
\\
\schur_{(n,1^2)} \otimes \Gamma^1
\ar@{^(->}[r]
\ar[rd]_0 
&
\Lambda^3  \otimes T^n
\ar[d]^{\mu \otimes \mathrm{Id}}
\\
&
\Lambda^4  \otimes T^{ n-1},
}
\]
in which $\mu : \Lambda^3 \otimes \Lambda^1 \rightarrow \Lambda^4 $ is the product and the top square is given by the definition of the intersection $\cap$ and the above, using the embedding (\ref{eqn:include_n1^2_otimes_1}).
 
The labelled map is zero by Lemma \ref{lem:ker_coker_diff}, using the characterization of $\diff$ given by Lemma \ref{lem:diff_characterization}. This implies that $\bigcap$ lies in the kernel of the right hand vertical composite. Hence, using  Example \ref{exam:schur_embedding}, $\bigcap$ lies in $\ker \diff \cong \schur_{(n+1,1^2)}$. To conclude, it remains to show that this is an equality (this is equivalent to establishing the non-triviality of $\bigcap$).

By Pieri's rule,  $ \Lambda^3 \otimes \Gamma^{n-2} \otimes T^2$
 contains a unique composition factor of $\schur_{(n+1,1^2)}$; this occurs in both $\schur_ {(n,1^2)}  \otimes \Gamma^1 $ and $(\mathrm{Id} \otimes \tau)\big(\schur_{(n, 1^2)} \otimes \Gamma^1\big)$, hence occurs in the intersection, thus concluding the proof. 
\end{proof}

%%%%%%%%%%%%%%%%%%%%%%%%%%%%%%%%%%%%%%%%%%%%%%%%%%%%%%%%%%%%%%%%%%%%%%%%%%%%%%%%%%
\subsection{The proof of Theorem \ref{THM} (assuming Theorem \ref{thm:case_r=3})}
The following proof exploits the description of $\ker \dbar_r$ given in Proposition \ref{prop:X_i_ker_dbar}.

\begin{proof}
The cases $r\in \{ 1, 2\}$ have been established in Example \ref{exam:r=1} and  Theorem \ref{thm:r=2} respectively. The case $r=3$ is  a (somewhat technical) calculation;  this has been postponed until  Section \ref{sect:case_r3}, where it is proved as Theorem \ref{thm:case_r=3}.

Hence, by induction upon $r$,  we may suppose that $r >3$ and that the result has been proved for $r-1$. It remains to  prove the inductive step. 

Proposition \ref{prop:lower_bound_ker} established the $\sym_r$-equivariant natural inclusion $
\Lambda^{r+2}\boxtimes \sgn_r \   \oplus \  \schur_{(r, 1^2)}  \boxtimes \triv_r \subset \ker (\dbar_r)$, 
hence it suffices only to consider the underlying functor, neglecting the $\sym_r$-action. 

Based upon Proposition \ref{prop:X_i_ker_dbar}, Corollary \ref{cor:ell_inclusion} with $\ell =1$ gives the inclusion 
\[
\ker \dbar_r \subset \Lambda^3 \otimes T^{r-1}.
\]

Using Proposition \ref{prop:X_i_ker_dbar}, we claim that there are inclusions:
\begin{eqnarray}
\label{eqn:first_face}
\ker \dbar_r &\subset & \ker \dbar_{r-1} \otimes \Lambda^1 
\\
\label{eqn:second_face}
\ker \dbar_r &\subset & (\id_{\Lambda^3 \otimes T^{\otimes r-3}} \otimes \tau ) \big( \ker \dbar_{r-1} \otimes \Lambda^1  \big), 
\end{eqnarray}
where both terms are considered as subobjects of $\Lambda^3 \otimes T^{r-1}$ and $\ker \dbar_{r-1}$ is considered as a subobject of $\Lambda^3 \otimes T^{r-2}$, again by 
Corollary \ref{cor:ell_inclusion} with $\ell =1$. The $\tau$ corresponds to the action of $\sym_2$ by place permutations on $T^2$, the last two tensor factors of $T^{r-1}$.

To establish this claim, it is useful to visualize the statement of Proposition \ref{prop:X_i_ker_dbar} by considering that $i \in \{1, \ldots, r\}$ indexes the vertices of a $r-1$-simplex, analogously to the geometric visualization proposed in Section \ref{subsect:geometric_vis}. Then the inclusion (\ref{eqn:first_face}) is obtained by  restricting to the codimension one face given by $i \neq r$, using Proposition \ref{prop:X_i_ker_dbar} for $r-1$ to identify $\ker \dbar_{r-1}$. The inclusion (\ref{subsect:geometric_vis}) is obtained similarly, this time using the face $i \neq r-1$. Using the equivariance property of $\dbar_r$, one obtains the stated inclusion by exploiting the transposition of tensor factors $\tau$.

Now, by the inductive hypothesis, there is a natural isomorphism:
\[
\ker \dbar_{r-1}  \otimes \Lambda^1
\cong 
\Lambda^{r+1} \otimes \Lambda^1  \ \oplus \ \schur_{(r-1,1^2)} \otimes \Lambda^1.
\]
Hence, putting the information from the two faces (\ref{eqn:first_face}) and (\ref{eqn:second_face}) together gives
\begin{eqnarray}
\label{eqn:include_cap}
\ker \dbar_r 
\subset 
\big(
\Lambda^{r+1} \otimes \Lambda^1 \ \oplus \ \schur_{(r-1,1^2)}\otimes \Lambda^1 
\big) 
\cap 
(\id \otimes \tau) \big(
\Lambda^{r+1}\otimes \Lambda^1  \ \oplus \ \schur_{(r-1,1^2)} \otimes \Lambda^1 
\big) 
\end{eqnarray}
where the intersection is formed as subobjects of $\Lambda^3 \otimes T^{r-1}$.

The Pieri rule gives the natural isomorphisms:
\begin{eqnarray*}
\Lambda^{r+1}\otimes \Lambda^1 
&\cong &
\Lambda^{r+2} \oplus \schur_{(2, 1^r)}  \\
 \schur_{(r-1,1^2)} \otimes \Lambda^1 
 &\cong &
 \schur_{(r,1^2)} \oplus \schur_{(r-1,2, 1)} \oplus \schur_{(r-1,1^3)}.
\end{eqnarray*}
In particular, since $r>3$ by hypothesis, $\Lambda^{r+1} \otimes \Lambda^1$ and $ \schur_{(r-1,1^2)} \otimes \Lambda^1$ have no composition factors in common. 
The analogous statement holds after applying the isomorphism $(\id \otimes \tau)$ to one of the factors. 
 This implies that the right hand side of (\ref{eqn:include_cap}) is equal to
\[
\Big((\Lambda^{r+1} \otimes \Lambda^1 ) \cap (\mathrm{Id} \otimes \tau)\big( \Lambda^{r+1} \otimes \Lambda^1  \big)\Big) 
\ \oplus \ 
\Big(
(\schur_{(r-1,1^2)} \otimes \Lambda^1) \cap (\mathrm{Id} \otimes \tau)\big(\schur_{(r-1,1^2)} \otimes \Lambda^1 \big)
\Big).
\]
By Lemmas \ref{lem:intersect_lambda} and  \ref{lem:intersect_schur} for the respective terms, the latter identifies with 
\[
\Lambda^{r+2}
\oplus
\schur_{(r,1^2)}, 
\]
giving an upper bound for $ \ker \dbar_r$. Since this coincides with the lower bound (neglecting the $\sym_r$-action) by Proposition \ref{prop:lower_bound_ker}, this  completes the proof of the inductive step and hence of the Theorem.
\end{proof}

\begin{rem}
This argument does not go through for $r=3$, since in that case $\Lambda^{4} \otimes \Lambda^1$ and $ \schur_{(2,1^2)} \otimes \Lambda^1 $ have a common  factor of $\schur_{(2,1^3)}$.
\end{rem}

 \section{The case $r=3$}
 \label{sect:case_r3}

This section  completes the proof of   Theorem \ref{THM} by establishing the postponed case $r=3$ that formed the initial step of the induction. 
 (This proof is independent of the material of Section \ref{sect:main}.)

\begin{thm}
\label{thm:case_r=3}
There is a $\sym_3$-equivariant natural isomorphism 
\[
\ker \dbar_3
\cong 
\Lambda^{5} \boxtimes \sgn_3\   \oplus \  \schur_{(3, 1^2)} \boxtimes \triv_3.
\]
\end{thm}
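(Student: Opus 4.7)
My plan is to close the gap between the lower bound $\Lambda^5 \boxtimes \sgn_3 \oplus \schur_{(3,1^2)} \boxtimes \triv_3 \hookrightarrow \ker \dbar_3$ provided by Proposition \ref{prop:lower_bound_ker} and the desired equality. Since the two summands on the left are simple and non-isomorphic as $\sym_3$-equivariant functors, the task is to rule out any additional composition factors.

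First, I would use Corollary \ref{cor:ell_inclusion} (taking $\ell = 1$) to embed $\ker \dbar_3 \hookrightarrow \Lambda^3 \otimes T^2$, and decompose the latter via Pieri's rule (applied to each summand of $T^2 \cong S^2 \oplus \Lambda^2$) as
\[
\Lambda^3 \otimes T^2 \ \cong \ \Lambda^5 \ \oplus \ \schur_{(3,1^2)} \ \oplus \ 2\, \schur_{(2,1^3)} \ \oplus \ \schur_{(2^2,1)}.
\]
Combined with the lower bound, this shows that the only possible additional composition factors of $\ker \dbar_3$ are copies of $\schur_{(2,1^3)}$ (multiplicity at most two) and $\schur_{(2^2,1)}$ (multiplicity at most one).

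Next, using Proposition \ref{prop:X_i_ker_dbar}, I would identify $\ker \dbar_3$ with $\ker \Phi$, where
\[
\Phi \ := \ \alpha_{1;3} + \alpha_{2;3} \circ \alpha_{1;2} \ \colon \ \Lambda^3 \otimes T^2 \rightarrow T^2 \otimes \Lambda^3.
\]
Indeed, given $X_1 \in \Lambda^3 \otimes T^2$, the conditions of Proposition \ref{prop:X_i_ker_dbar} for the pairs $(1,2)$ and $(1,3)$ force $X_2 = -\alpha_{1;2}(X_1)$ and $X_3 = -\alpha_{1;3}(X_1)$; the remaining constraint for $(2,3)$ then becomes exactly $\Phi(X_1) = 0$. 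Thus the task reduces to showing that $\Phi$ is injective on the $\schur_{(2,1^3)}$- and $\schur_{(2^2,1)}$-isotypic components of $\Lambda^3 \otimes T^2$.

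For this injectivity check, I would exploit the explicit formula $\tau \circ \alpha = 2e - \id$ of Corollary \ref{cor:new_tau_alpha} to express $\Phi$ as a combination of slot-transpositions and products/coproducts of $\Lambda^*$, allowing direct computation on highest-weight vectors. By Lemma \ref{lem:conn_schur}, $\schur_{(2^2,1)}$ is non-zero on $V = \rat^3$ where $\schur_{(2,1^3)}$ and $\Lambda^5$ vanish, so it suffices to evaluate $\Phi$ on a highest-weight vector of weight $(2,2,1)$ over $\rat^3$ and verify nonvanishing. For $\schur_{(2,1^3)}$, evaluating on highest-weight vectors of weight $(2,1^3)$ over $V = \rat^4$ (where $\Lambda^5$ still vanishes) reduces the question to checking that the induced $2\times 2$ endomorphism on the two-dimensional $\schur_{(2,1^3)}$-isotypic component is invertible; I expect this to be the main obstacle, as it requires tracking the full matrix rather than merely checking nonvanishing on a single vector. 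Once this upper bound is in place, the $\sym_3$-equivariant identification is read off from the explicit construction in Proposition \ref{prop:lower_bound_ker}: the $\Lambda^5$ summand is embedded via the coproduct of $\Lambda^*$ (hence carries the sign representation by graded commutativity), while $\schur_{(3,1^2)}$ is embedded via the coproduct of $\Gamma^*$ (hence carries the trivial representation).
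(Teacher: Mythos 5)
Your reduction to $\ker\Phi$ with $\Phi=\alpha_{1;3}+\alpha_{2;3}\circ\alpha_{1;2}$ is correct and is in essence the paper's own reduction: Propositions \ref{prop:cocycle_r=3} and \ref{prop:restate_cocycle_r=3} present $\ker\dbar_3$ as the equalizer of $\id$ and $-\alpha_{1;3}^{-1}\circ\alpha_{2;3}\circ\alpha_{1;2}$, and post-composing $\id+\alpha_{1;3}^{-1}\alpha_{2;3}\alpha_{1;2}$ with the isomorphism $\alpha_{1;3}$ gives exactly your $\Phi$. The Pieri decomposition $\Lambda^3\otimes T^2\cong\Lambda^5\oplus\schur_{(3,1^2)}\oplus\schur_{(2^2,1)}\oplus\schur_{(2,1^3)}^{\oplus 2}$ and the observation that Schur's lemma lets you treat the isotypic blocks separately are likewise sound. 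But the argument stops at precisely the step you call the main obstacle: showing $\Phi$ is injective on the multiplicity-two factor $\schur_{(2,1^3)}^{\oplus 2}$. Announcing that one would need to check invertibility of a $2\times 2$ matrix is not a proof that it is invertible; without the computation there is a genuine gap. (You should also actually verify nonvanishing of $\Phi$ on $\schur_{(2^2,1)}$ over $\rat^3$, since this simple factor is not covered by the lower bound of Proposition \ref{prop:lower_bound_ker}.)

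The paper avoids the direct $2\times 2$ calculation by a cleaner structural argument that you may wish to adopt. It factors $\alpha_{1;3}^{-1}\circ\alpha_{2;3}\circ\alpha_{1;2}=\beta^{\circ 3}$, where $\beta=(\id\otimes\tau)\circ(\tau\circ\alpha\otimes\id)$ is a composite of two involutions, each of which acts diagonally by $\pm 1$ on the Pieri summands (Lemmas \ref{lem:alpha_comp_versus beta}, \ref{lem:tau_alpha_otimes_id}, \ref{lem:id_otimes_tau}). That alone determines the determinant of $-\beta$ on the $\schur_{(2,1^3)}^{\oplus 2}$ block (Lemma \ref{lem:determinant}, it equals $1$) without choosing bases. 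The trace comes almost for free: the explicit formula for $-\beta$ (Lemma \ref{lem:-beta}) shows the associated $20\times 20$ matrix on the cross-effect $\sym_5$-representation has zero diagonal (Lemma \ref{lem:m}), and subtracting the known single-multiplicity contributions yields trace $-\tfrac{1}{2}$ on the doubled block (Lemma \ref{lem:trace}). This pins down the characteristic polynomial, giving eigenvalues $\tfrac14(-1\pm\sqrt{15}\,i)$, so $1$ is not an eigenvalue of $(-\beta)^{\circ 3}$ and the block contributes nothing to the equalizer. Either carry out the matrix computation you sketch, or use this determinant-and-trace route; as written, your proposal is a correct plan but an incomplete proof.
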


The proof uses Proposition \ref{prop:X_i_ker_dbar} for the case $r=3$. The relevant morphisms (see Notation \ref{nota:alpha_ij}) fit into  the (non-commutative) diagram:
\[
\xymatrix{
& 
\Lambda^1 \otimes \Lambda^1 \otimes \Lambda^3 
\ar[ldd]_{\alpha_{1;3}^{-1}} 
\\
\\
\Lambda^3 \otimes \Lambda^1 \otimes \Lambda^1 
\ar[rr]
_{\alpha_{1;2}}
&&
\Lambda^1 \otimes \Lambda^3 \otimes \Lambda^1. 
\ar[uul]_{\alpha_{2;3}} 
}
\]

The cocycle condition arising from Proposition \ref{prop:X_i_ker_dbar} can be restated as

\begin{prop}
\label{prop:cocycle_r=3}
The kernel of $\dbar_3$ is isomorphic to the equalizer of 
\[
\xymatrix{
\Lambda^3 \otimes \Lambda^1 \otimes \Lambda^1 
\ar@<.5ex>[rr]^\id 
\ar@<-.5ex>[rr]_{- (\alpha_{1;3}^{-1} \circ \alpha_{2;3} \circ \alpha_{1;2})}
&
\quad 
&
\Lambda^3 \otimes \Lambda^1 \otimes \Lambda^1. 
}
\]
\end{prop}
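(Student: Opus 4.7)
The plan is to deduce this directly from Proposition \ref{prop:X_i_ker_dbar} by eliminating the redundant components and reformulating the remaining constraint as an equalizer. First, I would invoke Corollary \ref{cor:ell_inclusion} in the case $\ell = 1$ to realize $\ker \dbar_3$ as a subfunctor of $\Lambda^3 \otimes \Lambda^1 \otimes \Lambda^1$ via projection onto the first summand. The problem is thereby reduced to identifying exactly which elements $X_1 \in \Lambda^3 \otimes \Lambda^1 \otimes \Lambda^1$ lift to triples $(X_1, X_2, X_3) \in \ker \dbar_3$, and to checking that such a lift, when it exists, is unique.

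Second, I would apply Proposition \ref{prop:X_i_ker_dbar}: a triple lies in $\ker \dbar_3$ if and only if $\alpha_{i;j}(X_i) = -X_j$ holds for all three pairs $1 \leq i < j \leq 3$. The conditions for $(i,j) = (1,2)$ and $(i,j) = (1,3)$ force
\[
X_2 = -\alpha_{1;2}(X_1), \qquad X_3 = -\alpha_{1;3}(X_1),
\]
so $X_2$ and $X_3$ are uniquely determined by $X_1$; this already takes care of the uniqueness of the lift. The remaining condition is $\alpha_{2;3}(X_2) = -X_3$, which after substitution becomes $\alpha_{2;3}(\alpha_{1;2}(X_1)) = -\alpha_{1;3}(X_1)$. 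Applying $\alpha_{1;3}^{-1}$ to both sides and rearranging gives
\[
X_1 = -\alpha_{1;3}^{-1} \circ \alpha_{2;3} \circ \alpha_{1;2} (X_1),
\]
which is precisely the equalizer condition stated in the proposition.

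There is no real obstacle here; the proof is essentially bookkeeping. The only care required is to verify that the correspondence $X_1 \leftrightarrow (X_1, -\alpha_{1;2}(X_1), -\alpha_{1;3}(X_1))$ is compatible with functoriality so that the isomorphism is natural in $V$, which is automatic from the naturality of the $\alpha_{i;j}$, and to check that no additional constraint has been dropped—i.e., that the pairs $(1,2)$, $(1,3)$, $(2,3)$ exhaust all index pairs, which is immediate for $r=3$.
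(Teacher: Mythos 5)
Your proof is correct and matches the paper's reasoning; the paper does not give a formal proof of this proposition but explains exactly this argument in the remark preceding it (setting $X_2 = -\alpha_{1;2}(X_1)$, $X_3 = -\alpha_{1;3}(X_1)$, and observing that the remaining constraint $\alpha_{2;3}(X_2) = -X_3$ is the cocycle condition). Your sign bookkeeping and the reduction to the equalizer via $\alpha_{1;3}^{-1}$ are both accurate.
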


The composite $\alpha_{1;3}^{-1} \circ \alpha_{2;3} \circ \alpha_{1;2}$ is an automorphism of $\Lambda^3 \otimes \Lambda^1 \otimes \Lambda^1 $.  
This will be analysed using the following automorphism:

\begin{nota}
\label{nota:beta}
Denote by $\beta$ the automorphism of $\Lambda^3 \otimes \Lambda^1 \otimes \Lambda^1$ given by the composite
\[
\Lambda^3 \otimes \Lambda^1 \otimes \Lambda^1
\stackrel{\tau \circ \alpha \otimes \id}{\longrightarrow}
\Lambda^3 \otimes \Lambda^1 \otimes \Lambda^1
\stackrel{\id \otimes \tau} {\longrightarrow }
\Lambda^3 \otimes \Lambda^1 \otimes \Lambda^1.
\]
\end{nota}

\begin{rem}
\label{rem:beta}
The morphism $\beta$ also identifies as the composite of $\alpha \otimes \id : \Lambda^3 \otimes \Lambda^1 \otimes \Lambda^1 \rightarrow \Lambda^1 \otimes \Lambda^3 \otimes \Lambda^1$ with the cyclic place permutation $\Lambda^1 \otimes \Lambda^3 \otimes \Lambda^1 \stackrel{\mathrm{cyclic}}{\rightarrow } \Lambda^3 \otimes \Lambda^1 \otimes \Lambda^1 $ (this corresponds to switching factors $\Lambda^1 \otimes F\rightarrow F \otimes \Lambda^1$, where $F:=\Lambda^3 \otimes \Lambda^1$.)
\end{rem}

\begin{lem}
\label{lem:alpha_comp_versus beta}
The composite $\alpha_{1;3}^{-1}\circ \alpha_{2;3} \circ \alpha_{1;2}$ is equal to the automorphism
 $\beta^{\circ 3}$ of $\Lambda^3 \otimes \Lambda^1 \otimes \Lambda^1$.
\end{lem}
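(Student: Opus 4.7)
The plan is to unpack the definition of $\beta$ via Remark \ref{rem:beta} as $\beta = \rho \circ \alpha_{1;2}$, where $\rho \colon \Lambda^1 \otimes \Lambda^3 \otimes \Lambda^1 \to \Lambda^3 \otimes \Lambda^1 \otimes \Lambda^1$ denotes the cyclic permutation $a \otimes b \otimes c \mapsto b \otimes c \otimes a$. I will also use the analogous cyclic permutation $\rho' \colon \Lambda^1 \otimes \Lambda^1 \otimes \Lambda^3 \to \Lambda^1 \otimes \Lambda^3 \otimes \Lambda^1$ and the doubled cyclic $\rho'' := \rho \circ \rho' \colon \Lambda^1 \otimes \Lambda^1 \otimes \Lambda^3 \to \Lambda^3 \otimes \Lambda^1 \otimes \Lambda^1$, which is given by $a \otimes b \otimes c \mapsto c \otimes a \otimes b$. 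The strategy is to push cyclic permutations past $\alpha$-morphisms by symmetric monoidal naturality, and then telescope.

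First I would establish two short commutation identities. The first, $\alpha_{1;2} \circ \rho = \rho' \circ \alpha_{2;3}$ as endomorphisms of $\Lambda^1 \otimes \Lambda^3 \otimes \Lambda^1$, is checked by noting that for $a \otimes b \otimes c$ (with $b \in \Lambda^3$) both sides evaluate to $\alpha(b \otimes c) \otimes a$, viewing $\alpha(b \otimes c) \in \Lambda^1 \otimes \Lambda^3$ and restacking. The second, $\rho \circ \alpha_{1;2} \circ \rho'' = \alpha_{1;3}^{-1}$ as maps $\Lambda^1 \otimes \Lambda^1 \otimes \Lambda^3 \to \Lambda^3 \otimes \Lambda^1 \otimes \Lambda^1$, is checked by unfolding $\alpha^{-1} = \sigma \circ \tau$ (which holds since $\sigma = \tau \circ \alpha$ is an involution by Proposition \ref{prop:new_tau_alpha}): on $a \otimes b \otimes c$ with $c \in \Lambda^3$, both sides equal $\sum s_k \otimes b \otimes r_k$, where $\alpha(c \otimes a) = \sum r_k \otimes s_k \in \Lambda^1 \otimes \Lambda^3$.

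With these two identities in hand, the lemma follows by telescoping. Squaring yields
\[
\beta^2 = \rho \circ (\alpha_{1;2} \circ \rho) \circ \alpha_{1;2} = \rho \circ \rho' \circ \alpha_{2;3} \circ \alpha_{1;2} = \rho'' \circ \alpha_{2;3} \circ \alpha_{1;2},
\]
and a further composition gives
\[
\beta^3 = (\rho \circ \alpha_{1;2} \circ \rho'') \circ \alpha_{2;3} \circ \alpha_{1;2} = \alpha_{1;3}^{-1} \circ \alpha_{2;3} \circ \alpha_{1;2},
\]
as required.

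The main obstacle is purely notational bookkeeping: the three spaces $T^{i-1} \otimes \Lambda^3 \otimes T^{3-i}$ ($i=1,2,3$) are abstractly isomorphic via symmetric monoidal symmetries but must be carefully distinguished as distinct functors, and one must track the correct composition directions of the various cyclic permutations and $\alpha$-morphisms. Conceptually, nothing beyond the symmetric monoidal naturality of $\alpha$ and the involution property of $\sigma$ enters; in particular, no analysis of the simple constituents of $\Lambda^3 \otimes \Lambda^1 \otimes \Lambda^1$ is required at this stage.
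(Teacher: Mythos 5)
Your proof is correct and follows the same route as the paper's: the paper appeals to Remark \ref{rem:beta} and declares the result a ``direct verification'' that the cyclic permutation feeds $\alpha$ the correct factors in the correct order, which is exactly what your two commutation identities ($\alpha_{1;2}\circ\rho = \rho'\circ\alpha_{2;3}$ and $\rho\circ\alpha_{1;2}\circ\rho'' = \alpha_{1;3}^{-1}$, the latter using $\alpha^{-1}=\tau\circ\alpha\circ\tau$ from Proposition \ref{prop:new_tau_alpha}) and the telescoping make precise. You have simply organized the bookkeeping the paper leaves implicit into a tidy, checkable form.
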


\begin{proof}
This is a direct verification, based on the construction of the morphisms $\alpha_{1;3}$,   $\alpha_{2;3}$,  $\alpha_{1;2}$ from $\alpha$. Namely, using Remark \ref{rem:beta},
one reduces to checking that the cyclic permutation in the definition of $\beta$ always ensures that the $\alpha$ is applied to the correct choice of factors, in the correct order. 
\end{proof}

Using Lemma \ref{lem:alpha_comp_versus beta}, Proposition \ref{prop:cocycle_r=3} can be restated as:

\begin{prop}
\label{prop:restate_cocycle_r=3}
The kernel of $\dbar_3$ is equal to  the equalizer of $\id$ and $(- \beta)^{\circ 3}$.
\end{prop}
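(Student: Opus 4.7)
The proposal is to assemble this statement directly from the two immediately preceding results, with only a trivial sign manipulation. The plan is as follows.

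First, I would invoke Proposition \ref{prop:cocycle_r=3}, which already identifies $\ker \dbar_3$ as the equalizer (on $\Lambda^3 \otimes \Lambda^1 \otimes \Lambda^1$) of $\id$ and $-(\alpha_{1;3}^{-1} \circ \alpha_{2;3} \circ \alpha_{1;2})$. Then, applying Lemma \ref{lem:alpha_comp_versus beta}, the composite $\alpha_{1;3}^{-1} \circ \alpha_{2;3} \circ \alpha_{1;2}$ may be rewritten as $\beta^{\circ 3}$, so that $\ker \dbar_3$ is the equalizer of $\id$ and $-\beta^{\circ 3}$.

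It then remains to observe the elementary identity
\[
(-\beta)^{\circ 3} \;=\; (-1)^3 \, \beta^{\circ 3} \;=\; -\beta^{\circ 3},
\]
where the first equality holds because $\beta$ is $\rat$-linear, so that the scalar $-1$ factors out of each composition. Combining this with the previous step gives the stated equality of equalizers.

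The proof is therefore essentially a bookkeeping argument, with no genuine obstacle; its content is entirely carried by the earlier lemma and proposition. The only point worth noting, once this is written out, is that the sign $-1$ has been repackaged: in Proposition \ref{prop:cocycle_r=3} it appears as a global sign in front of the triple composite (arising from the condition $\alpha_{i;j}(X_i) = -X_j$), whereas in the present statement it has been absorbed into each factor of the threefold iterate of $\beta$, which is natural for the inductive use of $\beta$ later on.
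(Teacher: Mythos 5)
Your argument is correct and matches what the paper does: the proposition is indeed just Proposition \ref{prop:cocycle_r=3} combined with Lemma \ref{lem:alpha_comp_versus beta}, plus the observation that $(-\beta)^{\circ 3}=-\beta^{\circ 3}$ because $\beta$ is $\rat$-linear. The paper treats this as immediate (stating the proposition as a restatement without a separate proof), so your write-up is just a slightly more explicit version of the same reasoning.
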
 

 We thus need to understand the automorphism $- \beta$ of $\Lambda^3 \otimes \Lambda^1 \otimes \Lambda^1$. This  identifies explicitly using Corollary \ref{cor:new_tau_alpha}:

\begin{lem}
\label{lem:-beta}
The automorphism $- \beta$ of $\Lambda^3 \otimes \Lambda^1 \otimes \Lambda^1$ evaluated on $V$ is given for $x,y,z,s,t\in V$ by 
 $$- \beta  (x \wedge y \wedge z \otimes s \otimes t ) = 
\frac{1}{2} \big( x \wedge y \wedge z  \otimes t \otimes s + x \wedge y \wedge s \otimes t \otimes z - x \wedge z \wedge s \otimes t \otimes   y + y \wedge z \wedge s \otimes t  \otimes x \big).$$ 
\end{lem}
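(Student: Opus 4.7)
The plan is to unfold the definition of $\beta$ given in Notation \ref{nota:beta} as the two–step composite
\[
\Lambda^3 \otimes \Lambda^1 \otimes \Lambda^1
\xrightarrow{(\tau\circ\alpha)\otimes \id}
\Lambda^3 \otimes \Lambda^1 \otimes \Lambda^1
\xrightarrow{\id \otimes \tau}
\Lambda^3 \otimes \Lambda^1 \otimes \Lambda^1
\]
and to compute each stage on the element $x\wedge y \wedge z \otimes s \otimes t$ using the explicit formula for $\tau\circ\alpha$ given by Corollary \ref{cor:new_tau_alpha}.

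First I apply $(\tau\circ\alpha)\otimes \id$. Corollary \ref{cor:new_tau_alpha} expresses $\tau\circ\alpha$ on $\Lambda^3\otimes \Lambda^1$ applied to $x\wedge y\wedge z \otimes s$ (here $s$ plays the role of the ``$w$'' in that statement) as
\[
-\tfrac{1}{2}\bigl( x\wedge y \wedge z \otimes s + x\wedge y\wedge s \otimes z - x\wedge z\wedge s \otimes y + y\wedge z\wedge s \otimes x\bigr),
\]
so after tensoring with $t$ on the right the image of $x\wedge y\wedge z \otimes s \otimes t$ under $(\tau\circ\alpha)\otimes \id$ is this same sum with an extra $\otimes t$ appended to each term.

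Second, I apply $\id\otimes\tau$, which simply transposes the last two tensor factors, converting every $\otimes (-) \otimes t$ into $\otimes t \otimes (-)$. This gives the expression for $\beta(x\wedge y\wedge z\otimes s\otimes t)$, and multiplying by $-1$ yields precisely the claimed formula for $-\beta$. The computation is purely bookkeeping; there is no genuine obstacle, the only point requiring care being that in the first step the variable $w$ of Corollary \ref{cor:new_tau_alpha} must be matched with $s$ (not $t$), since $(\tau\circ\alpha)$ acts on the first two tensor factors before the transposition of the final two factors is performed.
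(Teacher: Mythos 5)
Your proposal is correct and takes precisely the route the paper intends (the paper leaves the verification implicit, remarking only that the formula follows from Corollary \ref{cor:new_tau_alpha}): apply $(\tau\circ\alpha)\otimes\id$ with $w=s$, then transpose the last two factors via $\id\otimes\tau$, and negate. The arithmetic checks out.
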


To analyse the automorphism $- \beta$ by exploiting the representation theory, one can use the canonical  decomposition of $\Lambda^3  \otimes \Lambda^1  \otimes \Lambda^1$ that is given by Pieri's rule:
\begin{eqnarray}
\label{eqn:pieri_311}
\Lambda^3 \otimes \Lambda^1 \otimes \Lambda^1
\cong 
\Lambda^5 \oplus \schur_{(3,1^2)}  \oplus \schur_{(2^2,1)} \oplus F_{(2,1^3)}
\end{eqnarray}
where $F_{(2,1^3)}$ is non-canonically isomorphic to 
$\schur_{(2,1^3)}^{\oplus 2}$.

By Schur's lemma, we have the following analogue of Lemma \ref{lem:schur_Lambda3_otimes_Lambda1}:

\begin{lem}
\label{lem:endo_auto}
There is an isomorphism of algebras
\begin{eqnarray*}
\mathrm{End}(\Lambda^3 \otimes \Lambda^1\otimes \Lambda^1)
&\cong & 
\mathrm{End}(\Lambda^5) 
\prod 
\mathrm{End}(\schur_{(3,1^2)} ) 
\prod
\mathrm{End}(\schur _{(2^2,1)}) 
\prod 
\mathrm{End}(F_{(2,1^3)}). 
\end{eqnarray*}

A choice of isomorphism $F_{(2,1^3)} \cong \schur_{(2,1^3)}^{\oplus 2}$ induces the isomorphism
\begin{eqnarray}
\label{eqn:iso_depends_F_iso}
\mathrm{End}(\Lambda^3 \otimes \Lambda^1\otimes \Lambda^1)
\cong
\rat_{(1^5)} \prod \rat_{(3,1^2)}  \prod \rat_{(2^2,1)} \prod M_2 (\rat).
\end{eqnarray}
\end{lem}

\begin{rem}
The fact that the decomposition (\ref{eqn:pieri_311}) is not multiplicity-free is the source of complexity in the proof of Theorem \ref{thm:case_r=3}. In particular,  we stress that 
the isomorphism $\mathrm{End}(F_{(2,1^3)}) \cong M_2 (\rat)$ is not canonical.
\end{rem}

We analyse the isomorphism (\ref{eqn:pieri_311}) in two different ways, adapted respectively to considering the action of $\tau \circ \alpha \otimes \id$ and of $\id \otimes \tau$, the two isomorphisms appearing in the construction of $\beta$ in Notation \ref{nota:beta}.

For the first, starting from the decomposition $\Lambda^3 \otimes \Lambda^1 \cong \Lambda^4 \oplus \schur _{(2,1^2)}$, one uses the identifications given by the Pieri rule
\begin{eqnarray*}
\Lambda^4 \otimes \Lambda^1 & \cong & \Lambda^5 \oplus \schur_{(2, 1^3)} \\
\schur_{(2,1^2)} \otimes \Lambda^1 & \cong & \schur_{(3, 1^2)} \oplus \schur_{(2^2,1)} \oplus \schur _{(2,1^3)}.
\end{eqnarray*}
In particular, this provides an isomorphism $F_{(2,1^3)} \cong \schur_{(2,1^3)}^{\oplus 2}$, where $F_{(2,1^3)}$ is as in (\ref{eqn:pieri_311}). Then, by Proposition \ref{prop:new_tau_alpha}, one has the following, in which $(1,-1)$ denotes the diagonal matrix in $M_2 (\rat)$ with respect to the chosen decomposition:

\begin{lem}
\label{lem:tau_alpha_otimes_id}
Using the above isomorphism $F_{(2,1^3)} \cong \schur_{(2,1^3)}^{\oplus 2}$, with respect to the corresponding  isomorphism (\ref{eqn:iso_depends_F_iso}), 
the endomorphism $\tau\circ \alpha \otimes \id \in \mathrm{End}(\Lambda^3 \otimes \Lambda^1\otimes \Lambda^1)$ identifies as $(1, -1, -1, (1, -1))$.
\end{lem}

To study the action of $\id \otimes \tau$, one uses the $\sym_2$-equivariant decomposition $\Lambda^1 \otimes \Lambda^1 \cong \schur_{(2)} \boxtimes \triv_2  \oplus \Lambda^2 \boxtimes \sgn_2$ (for the place permutation action on the left hand side) together with the isomorphisms given by the Pieri rule:
\begin{eqnarray*}
\Lambda^3 \otimes \schur_{(2)} & \cong & \schur_{(3,1^2)} \oplus \schur _{(2,1^3)} \\
\Lambda^3 \otimes \Lambda^2 & \cong & \schur_{(2^2,1)} \oplus \schur _{(2,1^3)} \oplus \Lambda^5.
\end{eqnarray*}

This provides a different isomorphism $F_{(2,1^3)} \cong \schur_{(2,1^3)}^{\oplus 2}$. 
Then, analogously to Lemma \ref{lem:tau_alpha_otimes_id}, one has: 

\begin{lem}
\label{lem:id_otimes_tau}
Using this isomorphism $F_{(2,1^3)} \cong \schur_{(2,1^3)}^{\oplus 2}$, with respect to the corresponding  isomorphism (\ref{eqn:iso_depends_F_iso}), 
the endomorphism $\id \otimes \tau \in \mathrm{End}(\Lambda^3 \otimes \Lambda^1\otimes \Lambda^1)$ identifies as $(-1, 1, -1, (1, -1))$.
\end{lem}

Since there are unique composition factors of $\Lambda^5$,  $\schur_{(3,1^2)}$, and  $\schur_{(2^2,1)}$ in $\Lambda^3 \otimes \Lambda^1 \otimes \Lambda^1$, Lemmas \ref{lem:tau_alpha_otimes_id} and \ref{lem:id_otimes_tau} imply:

\begin{lem}
\label{lem:-beta_mult_one}
The automorphism $- \beta$ restricts to 
\begin{enumerate}
\item 
$\id$ on the summand $\Lambda^5 \oplus \schur_{(3,1^2)}$; 
\item 
$- \id$ on the summand $\schur_{(2^2,1)}$.
\end{enumerate}
\end{lem}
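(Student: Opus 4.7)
The plan is to compute the action of $-\beta$ on each of the three multiplicity-one simple summands $\Lambda^5$, $\schur_{(3,1^2)}$, and $\schur_{(2^2,1)}$ of $\Lambda^3 \otimes \Lambda^1 \otimes \Lambda^1$ by combining the scalars supplied by Lemmas \ref{lem:tau_alpha_otimes_id} and \ref{lem:id_otimes_tau}. By Notation \ref{nota:beta}, $\beta$ is the composite $(\id \otimes \tau) \circ (\tau \circ \alpha \otimes \id)$, so the result should follow by multiplying the two prescribed scalars on each of these summands.

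The subtle point is that Lemmas \ref{lem:tau_alpha_otimes_id} and \ref{lem:id_otimes_tau} are stated with respect to possibly different isomorphisms of the form (\ref{eqn:pieri_311}); these isomorphisms agree on the multiplicity-one part but may differ on the $\schur_{(2,1^3)}^{\oplus 2}$ summand, where the endomorphism ring $M_2(\rat)$ is not commutative. The observation that resolves this is that, by Schur's lemma applied via Proposition \ref{prop:nat_schur}, any endomorphism of $\Lambda^3\otimes \Lambda^1\otimes \Lambda^1$ acts on each multiplicity-one isotypical component by a scalar which is intrinsic, i.e.\ independent of the choice of isomorphism in (\ref{eqn:pieri_311}). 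Hence the scalars recorded in the first three coordinates of the tuples in Lemmas \ref{lem:tau_alpha_otimes_id} and \ref{lem:id_otimes_tau} are unambiguous, and composition is given by componentwise multiplication on these three factors.

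Carrying out the multiplication, $\beta$ acts on $\Lambda^5$ by $(-1)\cdot 1 = -1$, on $\schur_{(3,1^2)}$ by $1\cdot (-1) = -1$, and on $\schur_{(2^2,1)}$ by $(-1)\cdot(-1)=1$. Negating gives the claimed values: $-\beta$ acts as $\id$ on $\Lambda^5 \oplus \schur_{(3,1^2)}$ and as $-\id$ on $\schur_{(2^2,1)}$.

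I expect no serious obstacle here beyond flagging the Schur's-lemma justification above; the content of the lemma is essentially bookkeeping, and the genuine computational work has already been done in establishing Lemmas \ref{lem:tau_alpha_otimes_id} and \ref{lem:id_otimes_tau}. Note that the lemma deliberately says nothing about the restriction of $-\beta$ to $\schur_{(2,1^3)}^{\oplus 2}$, which is exactly where the non-commutativity of $M_2(\rat)$ prevents one from reading off the action by a naive product; that case would require a compatible choice of basis for the isotypical component and is not needed for the intended application (Proposition \ref{prop:restate_cocycle_r=3}) on the multiplicity-one summands.
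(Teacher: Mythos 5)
Your proposal is correct and follows exactly the paper's reasoning: the paper derives Lemma \ref{lem:-beta_mult_one} directly from Lemmas \ref{lem:tau_alpha_otimes_id} and \ref{lem:id_otimes_tau}, noting (as you do) that the multiplicity-one composition factors $\Lambda^5$, $\schur_{(3,1^2)}$, $\schur_{(2^2,1)}$ give well-defined scalars that simply multiply under composition. Your explicit Schur's-lemma justification of why the scalars are intrinsic is the same point the paper compresses into the phrase ``Since there are unique composition factors.''
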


To prove Theorem \ref{thm:case_r=3}, it remains to show that $(-\beta)^{\circ 3}$ has no non-zero fixed points for the factor $F_{(2,1^3)}$; this restriction can be considered as an element of $ M_2 (\rat)$ via Lemma \ref{lem:endo_auto} (by choosing an isomorphism $F_{(2,1^3)} \cong \schur_{(2,1^3)}^{\oplus 2}$). It is sufficient to calculate the eigenvalues of this action (which do not depend upon the choice of isomorphism).

Now, the decompositions corresponding to the two composition factors $\schur_{(2,1^3)}$ in $\Lambda^3 \otimes \Lambda^1 \otimes \Lambda^1$ 
 that are used in Lemmas \ref{lem:tau_alpha_otimes_id} and \ref{lem:id_otimes_tau} do not coincide (that they must be different is a consequence of the calculations below), so one cannot directly read off the action of $- \beta$ on the corresponding isotypical component. One can, however, deduce the determinant:

\begin{lem}
\label{lem:determinant}
The automorphism $- \beta$ restricted to $F_{(2,1^3)}$, considered as an element of $M_2 (\rat)$ via Lemma \ref{lem:endo_auto},   
 has determinant $1$.
\end{lem}

\begin{proof}
Using Lemmas \ref{lem:tau_alpha_otimes_id} and \ref{lem:id_otimes_tau}, one sees that the determinant of $\beta$ is $1$, using that the diagonal matrix $\mathrm{diag} (1, -1)$ has determinant $-1$. Thus $- \beta$ also has determinant $1$, since $- \id = \mathrm{diag} (-1,-1)$ has determinant one. 
\end{proof}

To determine the eigenvalues, it thus suffices to  calculate the trace.  For this, it is convenient to pass to the associated representations of $\sym_5$ and their underlying vector spaces, using the Schur correspondence that is given by the Schur functor construction, with inverse given by cross effects.

The Schur functor $\Lambda^3 \otimes \Lambda^1\otimes \Lambda^1$ corresponds to the $\sym_5$-representation 
\begin{eqnarray}
\label{eqn:sym_5}
(\sgn_3 \boxtimes \triv_1  \boxtimes \triv_1) \uparrow_{\sym_3 \times \sym_1 \times \sym_1}^{\sym_5}
\cong
\sgn_5 
\oplus 
S_{(3,1^2)} \oplus S_{(2^2,1)} \oplus S_{(2,1^3)}^{\oplus 2}, 
\end{eqnarray}
This module has dimension $20$; the dimensions of $\sgn_5$, $S_{(3,1^2)}$,  $S_{(2^2,1)}$, and  $S_{(2,1^3)}$ are respectively $1$, $6$, $5$, and $4$.
 The automorphism $-\beta$ induces an automorphism of the $\sym_5$-module given by (\ref{eqn:sym_5}), and hence of its underlying $\rat$-vector space, $\rat^{\oplus 20}$. 

\begin{lem}
\label{lem:m}
The element of   $M_{20}(\rat)$ associated to $-\beta$ has trace $0$.
\end{lem}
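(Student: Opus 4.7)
The plan is to compute the trace directly by evaluating on $V = \rat^5$ with basis $e_1, \ldots, e_5$ and restricting to the ``multilinear'' summand (the part of multidegree $(1,1,1,1,1)$ with respect to the chosen basis), which is precisely the $20$-dimensional $\sym_5$-module of (\ref{eqn:sym_5}) under the Schur correspondence. A natural monomial basis is furnished by the vectors
\[
b_{I,j,k} \ := \ e_{i_1} \wedge e_{i_2} \wedge e_{i_3} \otimes e_j \otimes e_k,
\]
indexed by a $3$-subset $I = \{i_1 < i_2 < i_3\} \subset \{1,\ldots,5\}$ together with an ordered listing $(j,k)$ of the complement $\{1,\ldots,5\} \setminus I$. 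This yields $\binom{5}{3} \cdot 2 = 20$ basis vectors, matching the size of the matrix.

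Next I apply the explicit formula of Lemma \ref{lem:-beta} to each $b_{I,j,k}$, taking $x = e_{i_1}$, $y = e_{i_2}$, $z = e_{i_3}$, $s = e_j$, $t = e_k$. This expands $-\beta(b_{I,j,k})$ as a linear combination of four signed wedge-monomials (each with coefficient $\pm\tfrac12$). To extract the diagonal entry it suffices to identify the coefficient of $b_{I,j,k}$ itself in this expansion, which I will do by inspecting the occupant of the fifth tensor slot in each summand. Reading off from the formula, the four summands carry $e_j$, $e_{i_3}$, $e_{i_2}$, and $e_{i_1}$ respectively in the fifth slot, whereas $b_{I,j,k}$ has $e_k$ there. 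Since the five indices $i_1, i_2, i_3, j, k$ are pairwise distinct, none of the four terms is proportional to $b_{I,j,k}$, so the diagonal coefficient vanishes.

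Summing over all $20$ basis vectors then yields $\mathrm{tr}(-\beta) = 0$, as required. The argument is entirely mechanical; there is no real obstacle beyond careful bookkeeping of the fifth tensor slot. Conceptually, the point is that each of the four summands appearing in Lemma \ref{lem:-beta} either swaps positions $4$ and $5$ or else displaces an ``inner'' generator (one of $x, y, z$) into position $5$ while pushing $s$ into the wedge, so in the multilinear regime the generator occupying position $5$ in the image is never the one occupying position $5$ in the input.
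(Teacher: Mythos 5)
Your argument is correct and follows essentially the same route as the paper: the same monomial basis of the $20$-dimensional multilinear space, and the same inspection of the formula in Lemma~\ref{lem:-beta} to see that the matrix of $-\beta$ has zero diagonal. Your fifth-tensor-slot bookkeeping is simply a cleaner articulation of what the paper records as ``by inspection.''
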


\begin{proof}
The vector space underlying (\ref{eqn:sym_5}) has basis given by  the elements $(i \wedge j \wedge k) \otimes s \otimes t$, where $i, j, k, s,t$ are pairwise distinct elements of $\{1, \ldots ,5 \}$ with $i<j<k$. (This basis can be ordered lexicographically.)

By inspection using Lemma \ref{lem:-beta} and its analogue for the $\sym_5$-representations (given by considering cross-effects), the associated matrix is zero on the diagonal, hence the trace is zero.
\end{proof}

From this, one deduces the following counterpart of Lemma \ref{lem:determinant}

\begin{lem}
\label{lem:trace}
The automorphism $- \beta$ restricted to $F_{(2,1^3)} \cong \schur_{(2,1^3)} ^{\oplus 2}$, considered as an element of $M_2 (\rat)$ via Lemma \ref{lem:endo_auto},   
 has trace $-\frac{1}{2}$ (which is independent of the choice of the isomorphism).
\end{lem}

\begin{proof}
We first calculate the trace of the restriction of $-\beta$ (considered as an element of $M_{20}(\rat)$ as in Lemma \ref{lem:m}) to the direct summand $S_{(2,1^3)} ^{\oplus 2}$ (which has underlying vector space of dimension $8$). Using the decomposition (\ref{eqn:sym_5}) and Schur's lemma, $-\beta$ has block diagonal form. Moreover, by Lemma \ref{lem:-beta_mult_one}, it acts as $\id$ on $\sgn_5 \oplus S_{(3,1^2)}$ (which has dimension $7$) and by $-\id$ on $S_{(2^2,1)}$ (which has dimension $5$). Using the block diagonal form and Lemma \ref{lem:m}, it follows that the restriction of $- \beta$ to $S_{(2,1^3)} ^{\oplus 2}$ (an element of $M_8 (\rat)$) has trace $-2$. 

By Schur's Lemma, this element of $M_8 (\rat)$ is obtained from $\mathrm{End}_{\sym_5} (S_{(2,1^3)} ^{\oplus 2}) \cong M_2 (\rat)$ by the morphism of algebras $M_2 (\rat) \rightarrow M_8 (\rat)$ induced by the functor $- \otimes \rat^{\oplus 4}$ on vector spaces. (The trace is independent of choices made.) The result follows.
\end{proof}

\begin{proof}[Proof of Theorem \ref{thm:case_r=3}]
By Proposition \ref{prop:restate_cocycle_r=3},  $\ker \dbar_3$ is  the equalizer of $\id$ and $(-\beta)^{\circ 3}$. The behaviour of this on the single-multiplicity composition factors is given by Lemma \ref{lem:-beta_mult_one}. To conclude, we need to show that $\id - (-\beta)^{\circ 3}$ is injective restricted to $F_{(2,1^3)}$. 

Using the trace and determinant, from Lemmas \ref{lem:determinant} and \ref{lem:trace}, one deduces that the automorphism $- \beta$ restricted to $F_{(2,1^3)}$, considered as an element of $M_2 (\rat)$ via Lemma \ref{lem:endo_auto},  has eigenvalues (in $\mathbb{C}$) 
$ 
\frac{1}{4}(-1  \pm \sqrt{15} i).
$ 
This implies that $1$ is not an eigenvalue of $(-\beta)^{\circ 3}$, whence the result.
\end{proof}

\appendix 

\section{The relation with outer functors on free groups}
\label{sect:motivation}

The purpose of this section is to provide a few more details on the motivation for this work that was outlined in the Introduction. This should also help make the application to the proof of Conjecture \ref{COR:GH} slightly more transparent.

Let $\gr$ be the category of finitely-generated free groups and $\f (\gr\op; \rat)$ the category of functors from $\gr \op$ to $\rat$-vector spaces. This is equipped with the pointwise tensor product, so that $(F_1 \otimes F_2) (G) = F_1 (G) \otimes F_2 (G)$, for functors $F_1$, $F_2$ and $G$ a free group.

The Eilenberg-Mac Lane notion of polynomial functor generalizes to $\f (\gr\op; \rat)$, defined using the free product of groups. (For this and the following material, see \cite[Section 4]{2018arXiv180207574P}, for example.)

\begin{exam}
Let $F$ be a functor on $\gr\op$. Then
\begin{enumerate}
\item 
$F$ has polynomial degree $0$ if and only if it is constant; 
\item 
if $F(0)=0$, then $F$ has polynomial degree one if and only if it is additive, namely, for all $G_1$, $G_2$ in $\gr$, there is a natural isomorphism $F (G_1 \star G_2) \cong F (G_1 ) \oplus F(G_2)$. 
 \end{enumerate}
A concrete example of an additive functor is the dual of the abelianization functor, namely $\mathfrak{a}^\sharp : G \mapsto \hom (G, \rat)$, where $\hom$ denotes group homomorphisms, considering $\rat$ as an abelian group.

More generally, for $d \in \nat$, the functor  $(\mathfrak{a}^\sharp)^{\otimes d}$ has polynomial degree $d$. This splits as a direct sum of simple polynomial functors of polynomial degree exactly $d$  and every such simple polynomial functor  occurs as a composition factor of $(\mathfrak{a}^\sharp)^{\otimes d}$ (possibly with multiplicity $>1$).
\end{exam}

The full subcategory of polynomial functors in $\f (\gr\op; \rat)$ is not semisimple. However, every polynomial functor $F$ (of degree $d$, say)  has a natural increasing filtration of the form 
$$
F_{-1} = 0 \subseteq 
F_0 \subseteq F_1  \subseteq F_2 \subseteq \ldots \subseteq F_d = F,
$$
where each subquotient $F_t/ F_{t-1}$ is semisimple with composition factors of polynomial degree exactly $t$. (The form of this filtration is the reason for working with $\gr \op$  rather than $\gr$.) This justifies the following:

\begin{defn}
\label{defn:analytic_grop}
A functor in $\f (\gr\op; \rat)$  is analytic if it is the union of its polynomial subfunctors. The full subcategory of analytic functors is written $\f_\omega (\gr\op; \rat)$.
\end{defn}

The main result of \cite{MR4835394} gives a convenient model for $\f_\omega (\gr\op; \rat)$. To explain this, we recall  $\cat \lieopd$, the $\rat$-linear category underlying the PROP associated to the Lie operad $\lieopd$. This has set of objects $\nat$ and satisfies the following for $s, t \in \nat$:
$$
\cat \lieopd (s, t) 
\cong  
\left\{
\begin{array}{ll}
0 & s < t \\
\rat \sym_s & s= t \\
\lieopd (s) & t=1.
\end{array}
\right.
$$

In particular, fixing $t$, for any $s \in \nat$, $\cat\lieopd (s, t)$ is a $\rat \sym_s\op$-module so that, using the Schur functor construction, one has the associated analytic functor (on $\fmodq$)
$$
V \mapsto \cat \lieopd (-, t) (V).
$$
Unwinding the construction of the PROP associated to $\lieopd$, this analytic functor identifies as 
$$
V \mapsto \lie(V)^{\otimes t}.
$$ 
Moreover, the natural $\sym_t$-action arising from that on each $\cat \lieopd(s,t)$ corresponds to the place permutation action. This effectively encodes the structure of $\cat \lieopd (-, t)$ via the Schur correspondence. 

The model for analytic functors is given by the following:

\begin{thm}
\label{thm:catlie}
\cite[Theorem 1]{MR4835394}
The category $\f _\omega (\gr\op; \rat)$ is naturally equivalent to the category of $\cat \lieopd$-modules (aka., the category of $\rat$-linear functors from $\cat\lieopd$ to $\rat$-vector spaces).
\end{thm}

This is a useful result; for example, it allows one to read off directly the polynomial filtration of an analytic functor $F$ from its associated $\cat \lieopd$-module, denoted $M_F$ here. In particular, the composition  factors of $F$ of polynomial degree exactly $t$ are in bijective correspondence with those of the $\rat \sym_t$-module $M_F (t)$.

We are interested in the category of {\em outer functors} introduced in \cite{2018arXiv180207574P}:

\begin{defn}
\label{defn:fout}
The  category $\fout (\gr\op; \rat)$ of outer functors is the full subcategory of functors in $\f (\gr\op; \rat)$ on which inner automorphisms of free groups act trivially. 

The category $\fout_\omega (\gr\op; \rat)$ of analytic outer functors is the corresponding full subcategory of $\f_\omega (\gr\op; \rat)$.
\end{defn} 

Outer functors arise in many situations, for example when studying higher Hochschild homology of a wedge of circles as in \cite{2018arXiv180207574P}. For other examples, see \cite{MR4799912} and \cite{MR4830678}.

\begin{exam}
For any $d\in \nat$, $(\mathfrak{a}^\sharp)^{\otimes d}$ is an outer functor. This shows that all simple polynomial functors are outer functors. However, the category of outer functors is not closed under extension, and its structure turns out to be  subtle (see \cite{2018arXiv180207574P,2023arXiv231116881H}, for example).
\end{exam}

 The main result of \cite{MR4696223} provides a counterpart of Theorem \ref{thm:catlie} that models $\fout_\omega (\gr\op; \rat)$. 
To explain this, we review the ingredients. There is a $\rat$-linear functor 
$$
\cat \lieopd 
\twoheadrightarrow 
H_0 \cat \lieopd 
$$
that is the identity on objects and such that $\cat \lieopd (s,t ) \twoheadrightarrow H_0 \cat \lieopd (s,t)$ is surjective; it is an isomorphism for $s=t$. (In the notation of \cite{MR4696223}, $H_0 \cat \lieopd (s, -) = \cat \lieopd (s, -)^\mu$.) The $\rat$-linear category $H_0 \cat \lieopd $ will not be defined explicitly here; the identification of its associated analytic functor in Theorem \ref{thm:identify_H0} below is sufficient for current purposes.

The model for outer analytic functors is the following:

\begin{thm}
\label{thm:catlie_out}
\cite[Theorem 4]{MR4696223}
The category $\fout_\omega (\gr\op; \rat)$ is equivalent to the category of $H_0 \cat \lieopd$-modules. 

Moreover, via the equivalence of Theorem \ref{thm:catlie}, the inclusion $\fout_\omega (\gr\op; \rat) \subset \f (\gr \op; \rat)$ identifies as the restriction along $\cat \lieopd 
\twoheadrightarrow 
H_0 \cat \lieopd $.
\end{thm}

This shows that $H_0 \cat \lieopd$ controls the structure of $\fout_\omega (\gr\op; \rat)$. 

\begin{exam}
\label{exam:H_0_projective}
For $s \in \nat$, the $\cat \lieopd$-module given by $H_0 \cat \lieopd (s, -)$ corresponds under the equivalence of Theorem \ref{thm:catlie_out} to the projective cover in $\fout_\omega (\gr\op; \rat)$ of the functor $(\mathfrak{a}^\sharp)^{\otimes s}$.
\end{exam}

Finally, the relationship with the problem studied in the current paper is explained by the following:

\begin{thm}
\label{thm:identify_H0}
\cite[Theorem 5]{MR4696223}
For $t \in \nat$, the morphism of analytic functors (on $\fmodq$) induced by $\cat \lieopd (-,t) 
\twoheadrightarrow 
H_0 \cat \lieopd (-, t)
$ identifies with the $\rat \sym_t$-equivariant natural (with respect to $V$) projection:
$$
\lie (V)^{\otimes t} 
\twoheadrightarrow 
H_0 (\lie (V); \lie(V)^{\otimes t}).
$$
\end{thm}

For instance, this allows us to recover the underlying bimodules of $H_0 \cat \lieopd$:

\begin{exam}
For $s, t \in \nat$, the $\rat (\sym_s \op \times \sym_t)$-module $H_0 \cat \lieopd (s,t)$ is determined by 
$$
H_0 (\lie (V); \lie(V)^{\otimes t})^{[s]},
$$
where $(-)^{[s]}$ denotes the $s$th homogeneous component of an analytic functor on $\fmodq$.
\end{exam}

%\bibliographystyle{amsalpha}
%\bibliography{coadj.bib}
\providecommand{\bysame}{\leavevmode\hbox to3em{\hrulefill}\thinspace}
\providecommand{\MR}{\relax\ifhmode\unskip\space\fi MR }
% \MRhref is called by the amsart/book/proc definition of \MR.
\providecommand{\MRhref}[2]{%
  \href{http://www.ams.org/mathscinet-getitem?mr=#1}{#2}
}
\providecommand{\href}[2]{#2}

\end{document}